\newcommand{\compl}[1]{}
\newcommand{\eps}{\varepsilon}
 \newcommand{\R}{\mathbb R}
\DeclareMathOperator{\PSL}{PSL}
\DeclareMathOperator{\SL}{SL}
\DeclareMathOperator{\PGL}{PGL}
\DeclareMathOperator{\PSU}{PSU}
\DeclareMathOperator{\Sp}{Sp}
\renewcommand{\H}{\mathbb H}   % Hyperbolic plane H
\newcommand{\bH}{\partial \H} % boundary of H
\newcommand{\Sf}{S} % Surface (no boundary, infinite volume) S
\newcommand{\calG}{{\mathcal G}} % geods in ..
\newcommand{\calDG}{{\mathcal DG}} % transverse pairs of geods
\newcommand{\calC}{{\mathcal C}} % space of currents
\newcommand{\calL}{{\mathcal L}} % Liouville current
\DeclareMathOperator{\supp}{supp} % support of a current
\newcommand{\ML}{\mathcal{ML}} % Space of measured laminations
\newcommand{\MLc}{\ML_c} % with compact support
\newcommand{\Lam}{\Lambda} % a lamination
\newcommand{\Lamt}{{\widetilde\Lam}} % lift of \Lam
\newcommand{\Lamref}{\Lam^{\mathrm{ref}}} 
\newcommand{\calR}{{\mathcal R}} % a complementary region
\newcommand{\calRt}{\wt{\calR}} % lift
\newcommand{\calQ}{{\mathcal Q}} % another complementary region
\newcommand{\calQt}{\wt{\calQ}} % lift
\newcommand{\calE}{\mathcal E} % set of special geodesics
\DeclareMathOperator{\Syst}{Syst} % Systole
\DeclareMathOperator{\Graph}{Graph} % intersection graph
\DeclareMathOperator{\Hull}{Hull} % convex hull
\newcommand{\D}{\mathbb D}
\newcommand{\G}{\Gamma}
\newcommand{\C}{\mathbb C}
\newcommand{\F}{\mathbb F}
\DeclareMathOperator{\Carr}{Carr}
\newcommand{\calX}{\mathcal X}%character variety
\newcommand{\calV}{\mathcal V}
\newcommand{\LL}{\mathbb L}
\newcommand{\ov}{\overline}
 \newcommand{\wt}{\widetilde}
\newcommand{\calT}{\mathcal T}
\newcommand{\calF}{\mathcal F}
 \newcommand{\K}{\mathbb K}
 \newcommand{\Id}{{\rm Id}}
 \newcommand{\Z}{\mathbb Z}
 \renewcommand{\P}{\mathbb P}
 \newcommand{\N}{\mathbb N}
 \newcommand{\Dd}{\mathcal D}
\newcommand{\calB}{\mathcal B}
\newcommand{\Hom}{\mathrm {Hom}}
 \newcommand{\mcg}{\mathcal{MCG}}
\newcommand{\bq}{\begin{equation}}
\newcommand{\eq}{\end{equation}}
\newcommand{\bqn}{\begin{equation*}}
\newcommand{\eqn}{\end{equation*}}
\newcommand{\ba}{\begin{aligned}}
\newcommand{\ea}{\end{aligned}}
\newcommand{\be}{\begin{enumerate}}
\newcommand{\ee}{\end{enumerate}}
\newcommand{\bei}{\begin{itemize}}
\newcommand{\eei}{\end{itemize}}
\newcommand{\bsm}{\left(\begin{smallmatrix}}
\newcommand{\esm}{\end{smallmatrix}\right)}                   
\newcommand{\bpm}{\begin{pmatrix}}
\newcommand{\epm}{\end{pmatrix}}
\newcommand{\tr}{\operatorname{tr}}
\theoremstyle{plain}
\newtheorem{Theorem}{Theorem}[section]
\newtheorem{Lemma}[Theorem]{Lemma}
\newtheorem{Proposition}[Theorem]{Proposition}
\newtheorem{Corollary}[Theorem]{Corollary}
\newtheorem*{ClosingLemma}{Closing Lemma}
\newtheorem*{claim}{Claim}
\theoremstyle{definition}
\newtheorem{Example}[Theorem]{Example}
\newtheorem{Examples}[Theorem]{Examples}
\newtheorem{Remark}[Theorem]{Remark}
\numberwithin{equation}{section}
\newcommand{\thismonth}{\ifcase\month % case 0 --- impossible!
  \or January\or February\or March\or April\or May\or June%
  \or July\or August\or September\or October\or November%
  \or December\fi}
\date{\today}
\begin{document}

%\title[A structure theorem for geodesic currents] % short running title
%      {A structure theorem for geodesic currents and length spectrum
%        compactifications} 
%%% Full title
  % A decomposition theorem for geodesic currents
  
\title[Currents, Systoles, and Compactifications] % short running title
      {Currents, Systoles, \\ and Compactifications of Character Varieties} 
  
\author[]{M. Burger}
\address{Department Mathematik, ETH Zentrum, 
R\"amistrasse 101, CH-8092 Z\"urich, Switzerland}
\email{burger@math.ethz.ch}

\author[]{A. Iozzi}
\address{Department Mathematik, ETH Zentrum, 
R\"amistrasse 101, CH-8092 Z\"urich, Switzerland}
\email{iozzi@math.ethz.ch}

\author[]{A. Parreau}
\address{Univ. Grenoble Alpes, CNRS, Institut Fourier, F-38000
  Grenoble, France}
\email{Anne.Parreau@univ-grenoble-alpes.fr}

\author[]{M. B. Pozzetti}
\address{Mathematical Institute, Heidelberg University, Im Neuenheimer feld 205, 69120 Heidelberg, Germany }
\email{pozzetti@mathi.uni-heidelberg.de}
\subjclass[2010]{32G15, 22E40} %MSC2020 57K20 (Teichmueller, curve complexes etc.  22E40  Discrete subgroups of Lie groups  )

\thanks { Marc Burger thanks Francis Bonahon and Kasra Rafi for enlightening conversations on geodesic currents and Kasra Rafi for suggesting to prove a decomposition theorem for geodesic currents. Beatrice Pozzetti thanks Anna Wienhard and Darryl Cooper for insightful conversations.
\\
\indent
Beatrice Pozzetti was partially supported by SNF grant P2EZP2\_159117, 
and by DFG project PO 2181/1. Marc Burger and Alessandra Iozzi were partially supported by SNF grant 2-77196-16. 
Alessandra Iozzi acknowledges moreover support from U.S. National Science Foundation grants DMS 1107452, 1107263, 1107367 
"RNMS: Geometric Structures and Representation Varieties" (the GEAR Network). 
Anne Parreau thanks the Forschungsinstitut f\"ur Mathematik for their hospitality.
Marc Burger thanks the Leverhulme Trust for supporting his visit to the University of Cambridge as Leverhulme Visiting Professor.  
Marc Burger, Alessandra Iozzi and Beatrice Pozzetti thank the Isaac Newton Institute for Mathematical Sciences, Cambridge, 
for support and hospitality during the program ``Non-Positive Curvature Group Actions and Cohomology'' where work on this paper was undertaken. 
This work was partially supported by EPSRC grant no P/K032208/1.}

\date{\today}

\maketitle

\begin{abstract}
We study the Weyl chamber length compactification both
of the Hitchin and of the maximal character varieties 
and determine therein an open set of discontinuity 
for the action of the mapping class group.
This result is obtained as consequence of a canonical decomposition of a geodesic current 
on a surface of finite type arising from a topological decomposition of the surface
along special geodesics. We show that each component either is
associated to a measured lamination or has positive systole.
For a current with positive systole, we show that  the intersection
function on the set of closed curves is bilipschitz equivalent to the
length function with respect to a hyperbolic metric.
\end{abstract}

\tableofcontents

\section{Introduction}
\label{sec:intro}

Let $S=\Gamma\backslash\H$ be a geometrically finite surface, 
where $\H$ is the hyperbolic plane and $\Gamma<\PSL(2,\R)$
is a finitely generated torsion-free discrete group.
A geodesic current on  $S$ is a $\Gamma$-invariant Radon measure 
on the space $\calG(\H)$ of unoriented, unparametrized geodesics. 
Geodesic currents occur in many different contexts. 
For instance they play a fundamental role in the study of hyperbolic structures \cite{Bon88-lam}, 
of negatively curved metrics \cite{Otal}, or of singular flat structures
\cite{DLR}.
A crucial fact is that length functions of these structures are intersection functions of
geodesic currents. This has been recently extended to Hitchin and
maximal representations by Martone and Zhang \cite{Martone_Zhang} when
$S$ is compact (see also \cite{BCLS}). 

Let $\Sigma\subset S$ be the convex core of $S$.  
Our main object of study are geodesic currents on $S$ whose
support is contained in the subset $\wt{\calG}(\Sigma)$ of geodesics
whose projection is in $\Sigma$. 
We refer to these as {\em geodesic currents on $\Sigma$} and we denote them by $\calC(\Sigma)$.
The aim of this paper is to establish structural properties of
geodesic currents  on $\Sigma$ in terms of their intersection 
with closed geodesics, and in particular in terms of their systole. 
Our motivation comes from the study of compactifications of maximal
and Hitchin character varieties:  as an application, in the case in which $S$ is compact, 
we construct a natural open domain of discontinuity for the action of the
mapping class group of $S$ on the Weyl chamber length boundary of these components and show that
in the higher rank case this set is not empty (see \S~\ref{s:Intro_TP-boundary}).  
We will also give explicit examples of actions of $\Gamma$ on $\widetilde A_2$-buildings
whose orbit maps are quasi-isometric embeddings and whose length functions
are in this set of discontinuity (see \S~\ref{subsec:ActB}).

The degree of generality adopted here in our treatment of currents, in
particular allowing $S$ to have cusps, turns out to be needed in
order to understand all the possible degenerations of maximal or
Hitchin representations in higher rank groups. 
If $\F$ is a non-Archimedean real closed field,  
the study of maximally framed representations of surface groups into $\Sp(2m,\F)$ was initiated in \cite{BP}.
In a forthcoming paper we will show how to associate
to such a representation $\rho$ a geodesic current $\mu_\rho$ whose intersection function
on closed geodesics gives the length function, \cite{BIPP2}. 
Together with the results of this current paper, 
the assignment $\rho\mapsto\mu_\rho$ is a key tool in the study of the ``real spectrum'' compactification
of maximal and Hitchin character varieties and of its nice algebraic geometric properties (see \cite{BIPP-ann} for an announcement of the results).

\subsection{Decomposition of currents}
Given a geodesic current $\mu$ on $\Sigma$, 
we exhibit two laminations with corresponding decompositions of the current
and show that the complementary regions are filled by the support of $\mu$ in a specific manner. 

\medskip
We say that a geodesic $g$ in $\Sigma$ is \emph{$\mu$-short} if no lift thereof intersects transversally a geodesic in the support of $\mu$. 
The terminology reflects the fact that, in an appropriate sense, 
the topological intersection of two geodesics generalizes to a concept of length.  
Note that geodesic currents associated to points in the Hitchin and maximal character varieties 
of a compact surface $S$ are binding, that is, they have no $\mu$-short geodesics.  
This is no longer the case for currents associated to points in the boundary of such character varieties
and the study of $\mu$-short geodesics will enable us to analyze the structure of such currents.

Given a geodesic lamination $\Lam\subset\Sigma$ consisting of $\mu$-short geodesics, the current decomposes as a finite sum 
\bq\label{eq:decomp}
\mu=\mu_\Lam+\sum_\calR \mu_\calR
\eq
that is orthogonal for the Bonahon-intersection form $i(\cdot,\cdot)$
%
%\medskip
%\noindent
%\begin{minipage}{.5\textwidth}
(see \S2 for the definition),  
where the sum is taken over the complementary regions of $\Lam$, and $\mu_\calR$ (respectively $\mu_\Lam$)
denote  the currents on $\Sigma$ given by the restriction of $\mu$ to the set of geodesics projecting into $\calR$ 
(respectively $\Lam$).
\begin{center}
\begin{figure}[h]
\begin{tikzpicture}[scale=.6]
\draw (0,0) circle [radius=3];
% Lambda
\draw (.5,2.96) node[above] {$\wt\Lambda$};
% mu_R
\draw (-2.3,-2) node[left] {$\mu_{{\mathcal R}}$};
% R
\draw (-2.5,-.9) node {$\wt{\mathcal R}$};
% mu_R'
\draw (-1.2,-2.8) node[left] {$\mu_{{\mathcal R}'}$};
% R'
\draw (1.5,0) node {$\wt{\mathcal R}'$};
% center upper left
\draw (-1,2.83) arc (380: 236: 1);
\draw[very thick, green] (-.6,2.94) arc (369:281.5:3);
\draw[very thick, green] (-.4,2.95) arc (372:277.5:3);
\draw (.5,2.96) arc (350: 313.2:9);
\draw[very thick, green] (1,2.82) arc (350: 311.7:9);
\draw[very thick, green] (1.2,2.75) arc (350: 311.3:9);
\draw[very thick, green] (1.4,2.65) arc (350: 311.2:9);
%center lower left
\draw[very thick, green] (-2.87,.87) arc (427:307:1.4);
\draw[very thick, green] (-2.82,1) arc (430:304:1.5);
\draw[very thick, green] (-2.77,1.15) arc (433:301:1.6);
\draw[very thick, green] (-2.7,1.3) arc (436:297.5:1.7);
% center lower right
\draw[very thick, green] (-1.7,-2.5) arc (143:82.5:5);
\draw[very thick, green] (-1.5,-2.6) arc (140:83.5:5);
\draw[very thick, green] (-1.3,-2.7) arc (137:85:5);
\draw (0,-3) arc (180:53:1.5);
%\center upper right
%\draw (1.66, 2.5) arc (150:258: 2);
%\draw (1.6,2.55) arc (154:251: 2.3);
\draw (1.76, 2.43) arc (154:253: 2);
\draw (1.86, 2.35) arc (158:251: 1.9);
% circle again
\draw (0,0) circle [radius=3];
\end{tikzpicture}

\caption{The lift $\wt\Lambda$ to $\H$ of the lamination $\Lambda$, the lifts $\wt{\mathcal R}$ and $\wt{\mathcal R'}$ of the regions $\mathcal R$ and $\mathcal R'$ and some geodesics in the support of $\mu_{\mathcal R}$ and $\mu_{\mathcal R'}$}
\end{figure}
\end{center}

\medskip
%If $A$ is a subset of the set $\calG(\Sigma)$ of unparametrized unoriented geodesics in $\Sigma$ with a certain property $\tau$, we call {\em $(\tau,A)$-solitary} the geodesics in $A$ that do not intersect any other geodesic in $A$.
%We hence consider the set $\Lambda_\mu$ of $(\mu\text{-short},\calG(\Sigma))$-solitary geodesics
We consider the set $\Lambda_\mu$ of \emph{solitary $\mu$-short geodesics}, namely $\mu$-short geodesics that don't intersect any other $\mu$-short geodesic
\bqn
\Lambda_\mu:=\{c\in\calG(\Sigma):\, \mu\text{-short, and }i(c,c')=0\,\,\forall c' \;\mu\text{-short}\}
\eqn
and the set $\calE_\mu$ of \emph{closed $\mu$-short solitary geodesics}:
\bqn
\calE_\mu:=\{c\in\calG(\Sigma):\,\text{ closed, }\mu\text{-short, and }i(c,c')=0\,\,\forall c'\text{ closed and }\mu\text{-short}\}\,.
\eqn
%Let $\calE_\mu$ be the set of closed $\mu$-short geodesics in $\Sigma$ 
%that are not intersected transversally by any closed $\mu$-short  geodesic:
%\bqn
%\ba
%\calE_\mu:=\{c\in\calG(\Sigma):\,c\text{ closed, }\mu\text{-short, and }i(c,c')=0\,\,\forall c'\text{ closed and }\mu\text{-short}\}
%\ea
%\eqn
%and let $\Lam_\mu$ be the set of $\mu$-short geodesics in $\Sigma$ that are not intersected transversely by any $\mu$-short geodesic. 
Then $\calE_\mu$ is a finite collection of pairwise disjoint
simple closed geodesics containing the boundary components of $\Sigma$.
In particular $\calE_\mu$ is a geodesic lamination, and $\Lamref_\mu=\calE_\mu \cup\Lam_\mu$ is a geodesic lamination refining $\calE_\mu$.   

\begin{figure}[h]
\begin{center}
\begin{tikzpicture}[scale=.6]
\draw (0,0) circle [radius=3];
% center upper right
\draw (1.6,2.55) arc (154:251: 2.3);
% center upper left
\draw (-2,2.24) arc (400:290:2);
\draw (-1,2.83) arc (380: 236: 1);
\draw (-.6,2.94) arc (369:281.5:3);
\draw (-.4,2.95) arc (372:277.5:3);
% center lower left
\draw (-2.87,.87) arc (427:307:1.4);
\draw (-2.82,1) arc (430:304:1.5);
\draw (-2.77,1.15) arc (433:301:1.6);
\draw (-2.7,1.3) arc (436:297.5:1.7);
% center lower right
\draw (2.7,1.3) arc (130:235:1.5);
\draw (2.5,1.66) arc (130:242:1.7);
\draw (2.3,1.9) arc (134.5:237:2.15);
\draw (2,2.24) arc (140:250:2);
%trying to shade R
\filldraw[fill=orange!20!white, draw=orange!50!black] (1.6,2.55) arc (151: 238: 3) arc (331: 219:3) arc (298: 379:4) arc (98:58:3);
%geodesic right
\draw[very thick, red] (1.6,2.55) arc (151: 238: 3);
\draw (1.2,0.2) node {$\sigma_2$};
%geodesic left
\draw[very thick, red] (-.4,2.95) arc (378.8:298:4); 
\draw (-1.2,-1.5) node {$\sigma_1$};
% red crossing geodesics
%\draw[very thick, red]  (0,3) -- (1,-2.82);
\draw[very thick, red] (0,3) -- (0,-3);
\draw[very thick, red] (0,3) arc (180:237.1:5.5);
%\draw (1.3,-2) node {$\sigma_3$};
\draw[very thick, red] (1.3,2.7) -- (-1,-2.83);
%\draw (-.2,-2) node {$\sigma_2$};
\draw[very thick, red] (1,2.82) arc (160: 213.3:6);
% Lambda_mu
\draw[red] (-2.6,-2) node {$\Lambda_\mu$};
\draw[red] (3,-1.6) node {$\Lambda_\mu$};
%R
\draw[orange] (1.5,-3) node {$\mathcal R$};
%mu-short
\draw (-1.5,-3.2) node {$\mu$-short};
% circle again
\draw(0,0) circle [radius=3];
\end{tikzpicture}
\end{center}
\caption{All the thick red geodesics in  $\wt{\mathcal R}$ project to $\mu$-short;  only $\sigma_1$ and $\sigma_2$ are in $\wt\Lambda_\mu$.}
\end{figure}

\begin{center}
\begin{figure}[h]
\begin{tikzpicture}[scale=.5]
%bottom side surface
\draw (-6,4) arc (235: 305: 10);
%top side surface
\draw (-6,-4) arc (125: 55: 10);
%left hand side surface
\draw (-6, 3.8) .. controls (-3,2)  and (-3,-.5)..   (-6,-3.8);
%lamination lambda
\draw (5.5, 3.8) .. controls (0.5,1.5)  and (1,-1.5) ..   (5.5,-3.65);
\draw (1.7,0) node {$\lambda$};
%right hand side surface
\draw (5.5, 3.6) .. controls (2,2)  and (1.5,-1.5) ..   (5.5,-3.6);
%right side e_mu
\draw (0,2.2)  .. controls (1,1) and (1,-1).. (0,-2.2);
%left side e_mu
\draw[dashed] (0,2.2)  .. controls (-.8,1) and (-.8,-1).. (0,-2.2);
%e_mu
\draw (0,-2.2) node[below] {$\calE_\mu$};
%bottom genus
\draw (-3,-.5) arc (240: 300: 1.5);
%top genus
\draw (-2.8,-.5) arc (120: 60:1);
%geodesic c
\draw (-6,3.9) .. controls (0,0) .. (5.5,-3.95);
\draw (-2.3,1) node {$c$};
\end{tikzpicture}
\caption{Here $\Lambda=\{\lambda\}$ and $\mu=\delta_\lambda$.  
In this case there is a simple $\mu$-short geodesic $c$ that crosses $\calE_\mu$ but it is of course not closed.\label{fig:1}}
\end{figure}
\end{center}

\begin{Example}[See Figure~\ref{fig:1}]
  \label{ex:1}
%\be
%\item 
Let $S=\Sigma$ be a hyperbolic surface of finite area with at least one cusp, 
and let $\Lam\subset S$ be a finite lamination consisting of geodesics with all their endpoints in cusps. 
Let $\mu=\sum_{c\in \Lam}\delta_c$ be the geodesic current
that corresponds to the measured lamination with a Dirac mass
$\delta_c$ along every leaf $c$ of $\Lambda$. 
% (see Example~\ref{ex: basic currents}(2) for the definition of )
In this case $\calE_\mu$ is the set of boundary components of the smallest subsurface 
with geodesic boundary containing $\Lambda$ and $\Lam_\mu=\Lambda$.

%
%\item For a Patterson-Sullivan current $\mu$ on $\Sigma$ (as in Example~\ref{ex: basic currents} (3)) 
%the set $\calE_\mu$ consists of the boundary components of $\Sigma$ and $\Lambda_\mu$ is empty.

%\ee
\end{Example}

\begin{Theorem}\label{thm:1} 
Let  $\mu$ be a geodesic current on the convex core $\Sigma$ of a geometrically finite hyperbolic surface. 
\begin{enumerate}
\item Let $\Lambda$ be the lamination  $\Lambda=\calE_\mu$ 
and let us consider the corresponding decomposition \eqref{eq:decomp} of $\mu$.
For every complementary region $\calR$ of $\calE_\mu$, either $\mu_\calR=0$
or every lift of every closed geodesic $c$ in $\calR$ intersects transversally the support $\supp(\mu)$ of $\mu$.
%    For every complementary region $\calR$ in $\Sigma\smallsetminus \calE_\mu$ either $\mu_\calR=0$ or 
%no closed geodesic $c$ in $\calR$ is $\mu$-short, equivalently  $i(c,\mu)>0$ for every such $c$. 
\item Let $\Lambda$ be the lamination $\Lambda=\Lamref_\mu$ 
and let us consider the corresponding decomposition \eqref{eq:decomp} of $\mu$.
For every complementary region $\calR$ of $\Lamref_\mu$, either $\mu_\calR=0$
or every lift of every geodesic $c$ in $\calR$ intersects transversally $\supp(\mu)$.
%For every complementary region $\calR$ of $\Lamref_\mu$ either $\mu_\calR=0$ or no geodesic in $\calR$ is $\mu$-short.
\end{enumerate}
\end{Theorem}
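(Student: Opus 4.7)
The proof proceeds by contradiction in both parts, resting on two structural principles. First, the decomposition \eqref{eq:decomp} is orthogonal for Bonahon's intersection form: geodesics projecting into distinct components of $\Sigma\setminus\Lambda$, or into such a component versus $\Lambda$ itself, are separated by leaves of $\Lambda$ and therefore do not cross transversally; consequently $i(c,\mu)=i(c,\mu_\calR)$ for any geodesic $c$ in the interior of $\calR$. Second, by symmetry of transversal intersection, every geodesic in $\supp(\mu)$ is never crossed by a $\mu$-short geodesic; in particular, whenever $\supp(\mu_\calR)$ is itself a lamination, its leaves are simultaneously $\mu$-short and solitary among $\mu$-short geodesics, hence lie in $\Lam_\mu$.

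For part (1), assume $\mu_\calR\neq 0$ and that a closed geodesic $c\subset\calR$ satisfies $i(c,\mu)=0$, so that $c$ is $\mu$-short and $i(c,\mu_\calR)=0$. The main decomposition theorem of the paper (announced in the abstract) writes $\mu_\calR=(\mu_\calR)_{\mathrm{lam}}+(\mu_\calR)_{\mathrm{pos}}$, where the second summand has positive systole, i.e.\ $i(c',(\mu_\calR)_{\mathrm{pos}})>0$ for every closed geodesic $c'$ in the corresponding subsurface. The vanishing $i(c,\mu_\calR)=0$ forces $(\mu_\calR)_{\mathrm{pos}}=0$, so $\mu_\calR$ is a measured lamination with support $\Lambda'\subset\calR$. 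By the second structural principle, $\Lambda'\subset\Lam_\mu$; a closed leaf of $\Lambda'$ would then be a closed simple solitary $\mu$-short geodesic, hence in $\calE_\mu$, contradicting $\Lambda'\subset\calR$. So $\Lambda'$ has no closed leaves, and the components of $\calR\setminus\Lambda'$ are ideal polygons and crowns. Since $c$ avoids $\Lambda'$ and is closed, it must lie in a crown as its core curve. Any other closed $\mu$-short $c'$ meeting $c$ is likewise confined to $\calR\setminus\Lambda'$ (closed $\mu$-short geodesics cross neither $\calE_\mu=\partial\calR$ nor $\Lambda'\subset\supp(\mu)$), hence to the same crown as $c$; uniqueness of the core then yields $c=c'$. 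Therefore $c$ is solitary among closed $\mu$-short, placing $c\in\calE_\mu$ --- contradicting $c\subset\calR$ interior.

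For part (2), run the same structural analysis on $\mu_\calR$ with $\calR$ now a region of $\Lamref_\mu$. The laminar part of $\mu_\calR$ would have support in $\Lam_\mu$, but $\calR$ is disjoint from $\Lamref_\mu\supset\Lam_\mu$; hence this part vanishes and $\mu_\calR$ has positive systole. Every closed geodesic in $\calR$ therefore crosses $\supp(\mu)$. To upgrade to arbitrary geodesics, apply the Closing Lemma of the paper: a non-closed $\mu$-short $g\subset\calR$ would be approximated by closed curves $c_n\subset\calR$ with $i(c_n,\mu)=0$, contradicting positive systole.

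The main obstacle is the crown endgame of part (1): it requires the precise structure of complementary components of a lamination in a hyperbolic subsurface with geodesic boundary, the uniqueness of the core of a crown, and the confinement of closed $\mu$-short geodesics within a single region of $\calE_\mu$ --- together with a suitable treatment of possibly non-simple $c$, for instance by replacing $c$ with the boundary curves of its filling subsurface. The Closing Lemma in part (2) provides the other non-trivial technical input.
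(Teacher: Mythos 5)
The proposal takes a genuinely different route from the paper, but it is not a valid proof: it is circular and has technical gaps.

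The paper proves Theorem~\ref{thm:1} in \S~\ref{sec:3} purely combinatorially. It applies Proposition~\ref{prop:3.1} to the $\Gamma$-invariant set $\wt{B}$ of all lifts of closed $\mu$-short geodesics: the associated lamination $\Lambda_{\wt B}$ is shown to equal $\wt{\calE}_\mu$, and Proposition~\ref{prop:3.1}(2) then produces, in each complementary region, the dichotomy ``no geodesic of $\wt B$ meets $\calRt$'' versus ``every geodesic meeting $\calRt$ crosses some element of $\wt B_\calRt$''; the latter is incompatible with $\supp\mu$ meeting $\calRt$, since the elements of $\wt B$ are $\mu$-short. Part~(2) is deduced analogously from Proposition~\ref{prop:refdecA}. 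None of this uses the Closing Lemma, systoles, or the measured-lamination structure theory — those are tools for Theorems~\ref{thm_intro:positive systole} and~\ref{thm_intro:irred and syst 0}, developed \emph{after} Theorem~\ref{thm:1}.

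Your proof, by contrast, invokes the paper's ``decomposition into a lamination part and a positive-systole part.'' That statement is precisely the combination of Theorem~\ref{thm:1} with Theorems~\ref{thm_intro:positive systole} and~\ref{thm_intro:irred and syst 0}; using it to prove Theorem~\ref{thm:1} is circular. Even if you set circularity aside and try to apply Theorem~\ref{thm_intro:irred and syst 0} directly to $\mu_\calR$, its hypothesis — positive intersection with all closed geodesics in the interior — is exactly what fails under your contradiction hypothesis ($i(c,\mu_\calR)=0$), so you cannot quote it. The step ``$i(c,\mu_\calR)=0$ forces $(\mu_\calR)_{\mathrm{pos}}=0$'' also does not follow: $(\mu_\calR)_{\mathrm{pos}}$ would only have positive systole on the subsurface it fills, and $c$ could be disjoint from that subsurface without contradiction.

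In part~(2), the Closing Lemma (Proposition~\ref{prop5.1}) applies only to $\mu$-short geodesics that are \emph{recurrent} in $\mathring\Sigma$; an arbitrary geodesic in $\calR$ (e.g., one exiting into a cusp) need not be recurrent, so this step does not cover all geodesics in $\calR$, whereas the theorem requires it. Moreover the Closing Lemma produces closed geodesics with $i(\mu,c_\eps)<\eps$, not $=0$, and it does not guarantee these closed geodesics stay in $\calR$. The ``crown endgame'' you flag as the main obstacle in part~(1) is indeed unresolved — you acknowledge as much — and the paper sidesteps that entire discussion by running the argument on intersection graphs of sets of geodesics rather than on the measure $\mu_\calR$ itself.
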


If $S=\Sigma$ is compact, part (1) of Theorem~\ref{thm:1} has been
established independently by Erlandsson and Mondello \cite[\S~1.5]{EM18}.

%\begin{minipage}{.5\textwidth}
\begin{center}
\begin{figure}[h]
\begin{tikzpicture}%[scale=.6]
%\draw (-3,0) -- (3,0);
%\draw (0,-3) -- (0,3);
\draw (0,0) circle [radius=2];
\draw (-.7,-1.88) arc (315: 348: 7);
\draw (-.8,-1.84) arc (315: 348: 7);
\draw (-.9,-1.8) arc (315: 348.3: 7);
\draw (-1,-1.74) arc (315: 348.25: 7);
\draw (-1.1,-1.68) arc (315: 348.2: 7);
\draw (-1.2,-1.61) arc (315: 348: 7);
\draw (-1.3,-1.54) arc (315: 347.6: 7);
\draw[rotate=70] (-.7,-1.88) arc (315: 348: 7);
\draw[rotate=70] (-.8,-1.84) arc (315: 348: 7);
\draw[rotate=70] (-.9,-1.8) arc (315: 348.3: 7);
\draw[rotate=70] (-1,-1.74) arc (315: 348.25: 7);
\draw[rotate=70] (-1.1,-1.68) arc (315: 348.2: 7);
\draw[rotate=70] (-1.2,-1.61) arc (315: 348: 7);
\draw[rotate=70] (-1.3,-1.54) arc (315: 347.6: 7);
\draw[red] (-1.68,-1.1) arc (300: 431: 1);
\draw[red] (-1.77,-.93) arc (300:431: .8);
\draw[red] (-1.86,-.78) arc (300:431: .6);
\draw[red] (-.3,-1.96) arc (171: 20: .5);
\draw[red] (1.414,1.414) arc (150: 225: 2);
\draw[red] (1.5, 1.3) arc (150: 255:.8);
\draw[red] (2,0) arc (90: 240:.5);
\draw (1.985,-.1) arc (90:260:.15);
\draw (1.975,-.2) arc (90:255:.2);
\draw (1.95,-.5) arc (85:238:.15);
\draw (-1.414,1.414) node[left] {$\mu$};
\draw[red] (-2,0) node[left] {$\wt\Lambda$};
\end{tikzpicture}
%\caption{The case $\Lambda=\{\lambda\}$ and $\mu=\delta_\lambda$.}
\caption{The lift of the lamination $\Lambda$ in Theorem \ref{thm:1} (2), and some geodesics in the support of $\mu$ (in black).}
\end{figure}
\end{center}
%\end{minipage}

\subsection{Systole of a current}
In the first decomposition in Theorem~\ref{thm:1} let $\Sigma=\Gamma\backslash C$,
where $C\subset\H$ is the convex hull of the limit set of $\Gamma$ and let $\calR$ be a complementary region of $\calE_\mu$; 
its metric completion $\Sigma'$ is then the quotient $\G'\backslash C'$ of a closed convex subset $C'\subset C$ 
by a finitely generated subgroup $\Gamma'<\G$. 
We can then restrict $\mu$ to the set of geodesics contained in $C'$ and 
obtain a current $\mu'$ on $\Sigma'$ such that $i(\mu,c)=i(\mu',c)$ for every closed geodesic $c$ contained in $\Sigma'$. 
Thus, because of Theorem~\ref{thm:1}(1), in order to study currents  on a surface with boundary 
it is enough to consider currents $\mu$ for which $i(\mu,c)>0$ for every closed geodesic $c$ contained in the interior of the surface.

\medskip
For a current $\mu$ on $\Sigma$, define then its \emph{systole}
\bqn
\Syst(\mu):=\inf i(\mu,c)
\eqn
where the infimum is taken over all closed geodesics $c$ contained in the interior $\mathring \Sigma$ of $\Sigma$.
Our next results analyze the structure of $\mu$ depending on the vanishing or the non-vanishing of $\Syst(\mu)$. 
These results are stated in terms of the intersection properties of $\mu$ with geodesic currents with compact carrier 
and with compactly supported measured laminations. 
The {\em carrier} $\Carr(\mu)\subset \Sigma$ of a current $\mu$ on $\Sigma$ is the closed subset 
obtained by projecting to $\Sigma$ all the points lying on the geodesics in the support of $\mu$. 
For a compact subset $K\subset \Sigma$,  let %$\calC_K(\Sigma)$ be the set of geodesic currents $\mu$ with carrier contained in $K$, 
\bqn
\calC_K(\Sigma):=\{\mu\in\calC(\Sigma):\,\Carr(\mu)\subset K\}
\eqn
and let 
\bqn
\MLc(\mathring \Sigma):=\{c\in\calC(\Sigma):\, i(\mu,\mu)=0\text{ and }\Carr(\mu)\subset\mathring\Sigma\text{ is compact}\}.
\eqn
%$\MLc(\mathring \Sigma)$ be the space of currents on $\Sigma$ 
%that correspond to measured geodesic laminations with compact carrier in $\mathring \Sigma$. 
Observe that the condition $i(\mu,\mu)=0$ means that any $\mu\in\MLc(\mathring \Sigma)$ corresponds to a measured geodesic lamination.

It is easy to see that there is a compact set $K\subset\mathring\Sigma$ such that any simple geodesic with compact  closure in $\mathring\Sigma$
is contained in $K$, so that $\MLc(\mathring\Sigma)\subset\calC_K(\Sigma)$.  Thus we have the following:
\begin{Theorem}\label{thm_intro:positive systole}   
Assume that $\mathring\Sigma$ is not the thrice punctured sphere.
Let $K\subset \mathring\Sigma$ be a compact subset such that 
$\MLc(\mathring\Sigma)\subset\calC_K(\Sigma)$.
Let $\mu$ be a geodesic
  current on $\Sigma$.
Then the following are equivalent:
\begin{enumerate}
\item\label{item:positive systole(1)} $\Syst(\mu)>0$;
\item the function $\lambda\mapsto i(\mu,\lambda)$ does not vanish on $\MLc(\mathring\Sigma)\smallsetminus\{0\}$;
\item the function $\lambda\mapsto i(\mu,\lambda)$ does not vanish on
 $\calC_K(\Sigma)\smallsetminus\{0\}$;
\item\label{item:positive systole(2)} Every geodesic recurrent in $\mathring\Sigma$ intersects transversely some geodesic in the support of $\mu$.
\end{enumerate}
\end{Theorem}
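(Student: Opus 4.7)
My plan is to first establish the three-way equivalence $(2)\Leftrightarrow(3)\Leftrightarrow(\ref{item:positive systole(2)})$ by a common Krylov--Bogoliubov argument on the geodesic flow of $\mathring\Sigma$, and then prove $(\ref{item:positive systole(1)})\Leftrightarrow(\ref{item:positive systole(2)})$, which contains the main technical content. The implication $(3)\Rightarrow(2)$ is immediate from the hypothesis $\MLc(\mathring\Sigma)\subset\calC_K(\Sigma)$.

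For the other implications among $(2)$, $(3)$, and $(\ref{item:positive systole(2)})$, the unifying observation is this: given either a recurrent $\mu$-short geodesic $g\subset\mathring\Sigma$ (negation of $(\ref{item:positive systole(2)})$) or a non-zero $\nu\in\calC_K(\Sigma)$ with $i(\mu,\nu)=0$ (negation of $(3)$; then any leaf $g$ of $\supp(\nu)$ is a $\mu$-short geodesic with carrier in the compact set $K\subset\mathring\Sigma$), the orbit closure $\mathcal{K}$ of $g$ under the geodesic flow on $T^1(\mathring\Sigma)$ is a compact flow-invariant subset, all of whose leaves are $\mu$-short; indeed, failing to intersect $\supp(\mu)$ transversely is a closed condition on $\calG(\H)$. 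Krylov--Bogoliubov applied to the flow restricted to $\mathcal{K}$ yields a non-zero invariant transverse measure, i.e.\ a non-zero $\lambda\in\MLc(\mathring\Sigma)$ with $i(\mu,\lambda)=0$, negating $(2)$. Conversely, if $\lambda\in\MLc(\mathring\Sigma)\setminus\{0\}$ satisfies $i(\mu,\lambda)=0$, a leaf of a minimal sublamination of $\supp(\lambda)$ is a recurrent $\mu$-short geodesic in $\mathring\Sigma$, negating $(\ref{item:positive systole(2)})$, and taking $\nu=\lambda$ also negates $(3)$.

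For $(\ref{item:positive systole(1)})\Rightarrow(\ref{item:positive systole(2)})$, argue by contraposition: given a recurrent $\mu$-short geodesic $g\subset\mathring\Sigma$ with minimal orbit-closure lamination $\Lam_0$, choose a thin train track $\tau$ in $\mathring\Sigma$ carrying $\Lam_0$. Every leaf of $\supp(\mu)$ passing near $\Lam_0$ cannot cross it, is therefore asymptotic to $\Lam_0$, and for $\tau$ sufficiently thin is carried by $\tau$; closed combinatorial loops on $\tau$ yield closed geodesics $c_n\subset\mathring\Sigma$ that are carried by $\tau$ and hence cross no leaf of $\supp(\mu)$, giving $i(\mu,c_n)=0$ and $\Syst(\mu)=0$. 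For the reverse direction $(\ref{item:positive systole(2)})\Rightarrow(\ref{item:positive systole(1)})$, pick closed geodesics $c_n\subset\mathring\Sigma$ with $i(\mu,c_n)\to 0$ and normalize $\lambda_n:=c_n/\ell(c_n)$. A Bonahon-type estimate showing $i(c_n,c_n)/\ell(c_n)^2\to 0$ implies that any weak-$*$ subsequential limit $\lambda$ satisfies $i(\lambda,\lambda)=0$, hence is a measured lamination; lower semicontinuity of the intersection form yields $i(\mu,\lambda)=0$. Provided one can ensure $\lambda\in\MLc(\mathring\Sigma)\setminus\{0\}$, the first part of the argument then produces the desired recurrent $\mu$-short geodesic, contradicting $(\ref{item:positive systole(2)})$.

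The main obstacle is exactly the guarantee $\lambda\neq 0$ with compact carrier in $\mathring\Sigma$: a priori the $c_n$ may escape every compact subset of $\mathring\Sigma$ by entering cusps or approaching $\partial\Sigma$, causing the naive normalization to produce a zero limit or a limit supported on the boundary. I would resolve this by truncating each $c_n$ at horoball neighborhoods of the cusps and collar neighborhoods of the boundary components, replacing $c_n$ by its compact-core portion of comparable length. The hypothesis that $\mathring\Sigma$ is not the thrice-punctured sphere is precisely the combinatorial condition ruling out the degenerate case in which $\MLc(\mathring\Sigma)=\{0\}$ and no such non-zero limit could ever exist, and it ensures that the truncated portions carry a definite mass in the limit.
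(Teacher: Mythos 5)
Your proposal has several genuine gaps, and the two hardest directions do not go through as written.

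\textbf{The implication $(\ref{item:positive systole(2)})'\Rightarrow(\ref{item:positive systole(1)})'$ (your $(1)\Rightarrow(4)$ by contraposition).} The train-track argument is broken at several points. First, the orbit closure of a recurrent $\mu$-short geodesic $g$ need not be a lamination: being $\mu$-short is a closed condition so the orbit closure consists of $\mu$-short geodesics, but two $\mu$-short geodesics can certainly cross (a recurrent $g$ can even cross itself), so there is in general no ``minimal orbit-closure lamination $\Lam_0$.'' Second, even granting $\Lam_0$, a leaf of $\supp(\mu)$ that does not cross $\Lam_0$ and comes close to it need not be asymptotic to $\Lam_0$ nor carried by a thin train track $\tau$; it can approach and leave. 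Third, and most seriously, a closed geodesic $c$ carried by $\tau$ is not $\mu$-short: two curves carried by the same train track can intersect transversally, and there is no reason the leaves of $\supp(\mu)$ avoid $c$. So the conclusion $i(\mu,c_n)=0$ is unjustified. The paper's proof of this direction (Proposition~\ref{prop5.1}) is quite different: one closes up the recurrent $\mu$-short geodesic using the Closing Lemma near a point $\wt p$ chosen so that $\mu$ has no one-sided atom at $\wt p$ (Lemma~\ref{lem:pencils}), and then bounds $i(\mu,c_\eps)\le L_\mu(\alpha)$ by the $\mu$-mass of a small ball around $\wt p$ minus the single geodesic, which is $<\eps$. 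Crucially one obtains closed geodesics with \emph{small} but not necessarily zero intersection, which is enough to get $\Syst(\mu)=0$.

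\textbf{The implication $(\ref{item:positive systole(1)})'\Rightarrow(\ref{item:positive systole(2)})'$ (your $(4)\Rightarrow(1)$ by contraposition).} Your normalization argument has two unresolved issues. The claimed ``Bonahon-type estimate $i(c_n,c_n)/\ell(c_n)^2\to 0$'' does not hold for an arbitrary sequence of closed geodesics with $i(\mu,c_n)\to 0$: such $c_n$ can have arbitrarily many self-intersections. And ``truncating at horoballs/collars'' is not a well-defined operation on geodesic currents, so it does not rescue the escape-of-mass problem. The paper resolves both issues at once by first proving, via the straight pseudo-distance $d_\mu$ and the Length-Shortening-Under-Surgery Lemma (Lemma~\ref{lem4.9}, Corollary~\ref{cor:SimpleSyst}), that $\Syst(\mu)=\Syst_s(\mu)$ when $\mathring\Sigma$ is not the thrice punctured sphere. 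One then works only with \emph{simple} closed geodesics $c_n$, for which $i(c_n,c_n)=0$ automatically, and whose carriers stay in a fixed compact subset of $\mathring\Sigma$, so that the normalized sequence lies in the compact set $\{\nu\in\MLc(\mathring\Sigma):i(\calL,\nu)=1\}$ (Proposition~\ref{prop:proper}) and has a non-zero accumulation point.

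\textbf{The Krylov--Bogoliubov step.} This gives a non-zero flow-invariant measure supported on $\mu$-short geodesics with compact carrier, i.e.\ a current in $\calC_K(\Sigma)\smallsetminus\{0\}$ with zero intersection with $\mu$ --- so it does prove $(\ref{item:positive systole(2)})'\Rightarrow(3)'$. But it does \emph{not} directly yield a measured lamination: geodesics in the support of this invariant measure may cross each other, so $(\ref{item:positive systole(2)})'\Rightarrow(2)'$ does not follow from this construction. The paper obtains $(\ref{item:positive systole(2)})'\Rightarrow(2)'$ only through the chain $(\ref{item:positive systole(2)})'\Rightarrow(\ref{item:positive systole(1)})'\Rightarrow(2)'$, which is exactly where the Closing Lemma and the surgery/simple-systole reduction are used. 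In short, you have essentially the easy pieces ($(2)'\Rightarrow(3)'\Rightarrow(\ref{item:positive systole(2)})'$), but both nontrivial implications $(\ref{item:positive systole(2)})'\Rightarrow(\ref{item:positive systole(1)})'$ and $(\ref{item:positive systole(1)})'\Rightarrow(2)'$ still need the paper's machinery.
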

We say that a geodesic $g\in\calG(\H)$ is recurrent in $\mathring \Sigma$ if the projection map from $g$ into $\mathring \Sigma$ is not proper.
If $\Sigma$ has no cusps, then property (\ref{item:positive systole(2)}) in Theorem~\ref{thm_intro:positive systole} 
is equivalent to the current $\mu$ being binding in the sense of \cite[Definition~3.1]{EM18}.
If $\Sigma$ is compact and $\partial\Sigma=\varnothing$, every geodesic is recurrent.

\begin{Example} \be 
\item Unlike the case where $\Sigma$ is closed, in general there are
geodesic currents with positive systole and whose support is a lamination: 
for example if in Example~\ref{ex:1} we take the lamination $\Lambda$ such that the complementary regions are ideal polygons, 
the corresponding current $\mu$ will have positive systole.
\item The Patterson-Sullivan current (see Example~\ref{ex: basic currents} (3)) has positive systole.
\ee
\end{Example}
For the next result we endow the space $\calC(S)$ of geodesic currents on $S$, and hence $\calC(\Sigma)$, with the weak*-topology 
coming from the dual of the topological vector space of continuous compactly supported functions on $\calG(\H)$. 
The space $\P\calC(\Sigma):=\R_{>0}\backslash (\calC(\Sigma)\smallsetminus\{0\})$ of projectivized geodesic currents is 
then endowed with the quotient topology and, as such, it is compact metrizable (see Proposition~\ref{prop:proper}).

\begin{Corollary}\label{cor_intro:MCG} 
\begin{enumerate} 
\item The systole function $\Syst:\calC(\Sigma)\to[0,\infty)$ is continuous.
\item For every current $\mu$ on $\Sigma$, with $\Syst(\mu)>0$ and every compact subset $K \subset \mathring\Sigma$, there are constants $0 < c_1 \le c_2 < + \infty$ such that
\bqn
c_1 \,\ell_{\mathrm {hyp}}(c) \le i(\mu,c) \le c_2 \,\ell_{\mathrm {hyp}}(c)
\eqn
for every closed geodesic $c \subset K$. Here $\ell_{\mathrm {hyp}}$ denotes the hyperbolic length.
\item The set $\Omega = \{[\mu] \in {\P} \calC(\Sigma): \Syst(\mu) > 0\}$ is open in $\P\calC(\Sigma)$ 
and the mapping class group of $\Sigma$ acts properly discontinuously on it.
\end{enumerate}
\end{Corollary}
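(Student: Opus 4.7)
The plan is to prove parts (2), (1), (3) in that order, since (2) furnishes the bi-Lipschitz comparison on which both (1) and (3) rest.

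For (2), after enlarging $K$ if necessary, we may assume that $\MLc(\mathring\Sigma)\subset\calC_K(\Sigma)$. Introduce
\[
\mathcal L_K:=\overline{\{\delta_c/\ell_{\mathrm{hyp}}(c):c\text{ closed geodesic},\ c\subset K\}}\subset\calC(\Sigma).
\]
Each $\delta_c/\ell_{\mathrm{hyp}}(c)$ has unit pairing under intersection with the hyperbolic (Liouville) current, so weak-$*$ compactness for Radon measures of bounded mass implies that $\mathcal L_K$ is compact and contained in $\calC_K(\Sigma)\setminus\{0\}$. By Theorem~\ref{thm_intro:positive systole}, the assumption $\Syst(\mu)>0$ forces $i(\mu,\cdot)>0$ on $\calC_K(\Sigma)\setminus\{0\}$, in particular on $\mathcal L_K$; continuity of $i(\mu,\cdot)$ on this compact set then yields $c_1:=\inf_{\mathcal L_K}i(\mu,\cdot)>0$ and $c_2:=\sup_{\mathcal L_K}i(\mu,\cdot)<\infty$, whence the claimed bi-Lipschitz inequality for closed geodesics $c\subset K$.

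For (1), upper semicontinuity of $\Syst$ is immediate, as an infimum of the continuous functions $\mu\mapsto i(\mu,c)$. For lower semicontinuity, I would assume by contradiction that $\mu_n\to\mu$ and $\liminf_n\Syst(\mu_n)<L<\Syst(\mu)$, and choose closed geodesics $c_n\subset\mathring\Sigma$ with $i(\mu_n,c_n)<L$. A Margulis-type thick-thin argument produces a single compact $K_0\subset\mathring\Sigma$ containing every closed geodesic of $\mathring\Sigma$, and finitely many closed geodesics lie below any prescribed hyperbolic length. If $\ell_{\mathrm{hyp}}(c_n)\to\infty$ along a subsequence, the normalized currents $\lambda_n:=\delta_{c_n}/\ell_{\mathrm{hyp}}(c_n)\in\mathcal L_{K_0}$ subconverge to some nonzero $\lambda\in\calC_{K_0}(\Sigma)$, and joint continuity of the intersection form gives $i(\mu,\lambda)=\lim_n i(\mu_n,c_n)/\ell_{\mathrm{hyp}}(c_n)=0$, contradicting Theorem~\ref{thm_intro:positive systole}. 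Otherwise $\ell_{\mathrm{hyp}}(c_n)$ stays bounded, so after subsequencing $c_n$ equals some fixed $c^*$, and $i(\mu,c^*)=\lim_n i(\mu_n,c^*)\le L<\Syst(\mu)$ contradicts $\Syst(\mu)\le i(\mu,c^*)$.

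For (3), openness of $\Omega$ in $\P\calC(\Sigma)$ follows from (1) together with homogeneity of $\Syst$. For proper discontinuity, note that $\Syst$ is $\mcg$-invariant, and suppose for contradiction that pairwise distinct $\phi_n\in\mcg$ satisfy $\phi_n C\cap C\neq\varnothing$ for a compact $C\subset\Omega$. Normalizing so that $\Syst(\mu_n)=1$ and extracting, $\mu_n\to\mu_\infty$ and $\phi_n^*\mu_n\to\nu_\infty$ in a compact subset of $\Omega$. Fix a finite collection $\gamma_1,\dots,\gamma_N$ of simple closed curves in $\mathring\Sigma$ filling the surface and such that the tuple $(\phi\gamma_i)_i$ determines $\phi\in\mcg$ up to finite ambiguity. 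Then $i(\mu_n,\phi_n\gamma_i)=i(\phi_n^*\mu_n,\gamma_i)\to i(\nu_\infty,\gamma_i)$ stays bounded. The main obstacle here is the uniformity of the bi-Lipschitz constant $c_1$ of (2) along the compact family $\{\mu_n\}\cup\{\mu_\infty\}$; I would resolve it by observing that $\mu\mapsto c_1(\mu)=\inf_{\mathcal L_{K_0}}i(\mu,\cdot)$ is continuous (via a standard min-max argument using joint continuity of intersection on the compact set $\mathcal L_{K_0}$) and strictly positive on $C$, hence uniformly bounded below on the convergent sequence. Combined with (2), this bounds $\ell_{\mathrm{hyp}}(\phi_n\gamma_i)$ uniformly in $n$, so each $\phi_n\gamma_i$ takes only finitely many values, forcing $\phi_n$ into a finite set, a contradiction.
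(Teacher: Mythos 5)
Your part (2) is essentially the paper's argument (compactness of the normalized Dirac masses inside $\calC_K(\Sigma)$, then Theorem~\ref{thm_intro:positive systole}(3) and continuity of $i$), and your part (3) is a correct alternative to the paper's proof: the paper instead shows that any accumulation point $[\mu]$ of $[\varphi_n(\tau)]$ for $\varphi_n\to\infty$ and $\tau\in\Omega$ must have $\Syst(\mu)=0$, using the continuity of $\Syst$ and divergence of the normalizing scalars. Your compactness-plus-filling-curves argument buys a more self-contained proper-discontinuity proof; the paper's version exploits (1) and homogeneity more directly and avoids invoking the algebraic fact that a filling multicurve determines an element of $\mcg$ up to finite ambiguity.

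Part (1), however, has a genuine gap. You assert that ``a Margulis-type thick-thin argument produces a single compact $K_0\subset\mathring\Sigma$ containing every closed geodesic of $\mathring\Sigma$.'' This is false whenever $\partial\Sigma\ne\varnothing$: a closed geodesic in the free homotopy class of $b^n a$, with $b$ a boundary geodesic and $a$ essential, spirals arbitrarily close to $b$ as $n\to\infty$, so no compact subset of $\mathring\Sigma$ contains all interior closed geodesics. Consequently the normalized currents $\delta_{c_n}/\ell_{\mathrm{hyp}}(c_n)$ need not lie in $\calC_{K_0}(\Sigma)$, and their accumulation points can escape $\MLc(\mathring\Sigma)$ entirely --- for example to $\delta_b/\ell_{\mathrm{hyp}}(b)$, which has $i(\mu,\delta_b)=0$ trivially (since $b\subset\partial\Sigma$) without contradicting $\Syst(\mu)>0$. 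Theorem~\ref{thm_intro:positive systole} then gives no contradiction. The paper avoids this by invoking Proposition~\ref{prop4.6}(1), i.e.\ $\Syst=\Syst_s$, and choosing the minimizing sequence $c_n$ to consist of \emph{simple} closed geodesics: these do lie in a fixed compact subset of $\mathring\Sigma$, so the normalized limit is a genuine element of $\MLc(\mathring\Sigma)\smallsetminus\{0\}$ and Theorem~\ref{thm_intro:positive systole}(2) applies. Relatedly, you do not address the thrice-punctured sphere, which is excluded from Theorem~\ref{thm_intro:positive systole} and which the paper treats separately via Proposition~\ref{prop4.6}(2) before beginning the main argument.
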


\begin{Remark}\label{rem1} 
If $S=\Sigma$ is a compact surface, in \cite{Martone_Zhang} the authors establish interesting systolic inequalities for \emph{period minimizing} currents with full support,
that is, currents with full support such that 
\bq\label{eq:MZ}
\{c \subset S:\, \text{ closed geodesic with }i(\mu, c) \le T\}
\eq 
is finite for every $T$. 
In fact our results imply that the first condition is redundant,
that is, every current with full support is period minimizing.
Indeed by Theorem~\ref{thm_intro:positive systole} a current with full support has necessarily positive systole.
By Corollary~\ref{cor_intro:MCG}(3) with $K=\Sigma$ this implies the finiteness of the set in \eqref{eq:MZ}.
\end{Remark}

\medskip
Assume now that $\mu$ is a geodesic current on $\Sigma$ such that $i(\mu,c)>0$ for all closed geodesics $c\subset\mathring\Sigma$.
Then $\calE_\mu$ is the set of boundary components of $\Sigma$ and $\mathring\Sigma$ is the unique complementary region.
According to \eqref{eq:decomp} we have a decomposition
\bqn
\mu=\mu_{\calE_\mu}+\mu_{\mathring\Sigma}
\eqn
and the next result gives the structure of the geodesic current $\mu_{\mathring\Sigma}$
if the systole of $\mu$ vanishes.

\begin{Theorem}\label{thm_intro:irred and syst 0}   
Let $\mu$ be a geodesic current on $\Sigma$
% as in Theorem~\ref{thm_intro:dec}(2). 
with positive intersection with every closed geodesic $c\subset\mathring\Sigma$. 
Then the following are equivalent:
\begin{enumerate}
\item\label{item:irred and syst 0(1)} $\Syst(\mu)=0$;
\item\label{item:irred and syst 0(2)} 
$\mu_{\mathring\Sigma}$ corresponds to a measured lamination,
 compactly supported in $\mathring\Sigma$, minimal 
and surface filling.
\end{enumerate}
\end{Theorem}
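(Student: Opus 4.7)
The plan is to exploit the hypothesis $i(\mu,c)>0$ for every closed $c\subset\mathring\Sigma$, which forces $\calE_\mu=\partial\Sigma$, so that the decomposition \eqref{eq:decomp} becomes $\mu=\mu_{\calE_\mu}+\mu_{\mathring\Sigma}$ with $\mu_{\calE_\mu}$ carried on $\partial\Sigma$. Since a closed geodesic in $\mathring\Sigma$ cannot cross $\partial\Sigma$, this gives $i(\mu,c)=i(\mu_{\mathring\Sigma},c)$ for every closed $c\subset\mathring\Sigma$, and in particular shows that $\mu_{\mathring\Sigma}$ fills $\mathring\Sigma$, in the sense that its support crosses every closed geodesic in $\mathring\Sigma$ transversely. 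This observation is the common starting point for both implications.

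For the direction $(2)\Rightarrow(1)$, writing $\nu:=\mu_{\mathring\Sigma}$, I would construct closed geodesics $c_n\subset\mathring\Sigma$ with $i(\nu,c_n)\to 0$ via a Poincar\'e recurrence argument: pick transversals $T_n$ to $\nu$ at interior points of $\supp(\nu)$ with $\nu(T_n)\to 0$; minimality of $\nu$ ensures that a leaf through $T_n$ returns arbitrarily close to itself in $T_n$, and closing up with a short arc of $T_n$ yields a simple loop $\gamma_n$ inside a small neighborhood of $\supp(\nu)\subset\mathring\Sigma$. Since $\nu$ is surface-filling, one can arrange $\gamma_n$ to be homotopically essential; its geodesic representative $c_n\subset\mathring\Sigma$ then satisfies $i(\nu,c_n)\le\nu(T_n)\to 0$.

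For $(1)\Rightarrow(2)$, assuming $\Syst(\mu)=0$, the plan is to first invoke Theorem~\ref{thm_intro:positive systole} to obtain a lamination $\lambda\in\MLc(\mathring\Sigma)\setminus\{0\}$ with $i(\mu,\lambda)=0$. The hypothesis on $\mu$ rules out closed leaves in $\lambda$, so $\lambda$ is a finite disjoint union of minimal, non-closed sublaminations $\lambda_1,\dots,\lambda_k$, each filling an essential subsurface $U_i\subset\mathring\Sigma$. Because each $\lambda_i$ fills $U_i$ and $i(\mu,\lambda)=0$, every geodesic of $\supp(\mu_{\mathring\Sigma})$ meeting $U_i$ must be a leaf of $\lambda_i$, for otherwise it would cross $\lambda_i$ transversely and contribute positively to $i(\mu,\lambda)$. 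Combining this with the filling property of $\mu_{\mathring\Sigma}$ is then meant to force $\bigcup_i U_i=\mathring\Sigma$ and $k=1$: any non-trivial subsurface $V\subset\mathring\Sigma\setminus\bigcup U_i$ would contain a closed geodesic $c\subset V$ that must be crossed by some geodesic in $\supp(\mu_{\mathring\Sigma})$, but such a crossing geodesic could only lie along $\lambda$, a contradiction; and $k\ge 2$ would permit a simple closed curve separating two components, disjoint from $\supp(\lambda)\supseteq\supp(\mu_{\mathring\Sigma})$, again violating filling. The desired conclusion, that $\mu_{\mathring\Sigma}$ is a measured lamination with support $\supp(\lambda)$ -- minimal, surface-filling, and compactly supported in $\mathring\Sigma$ -- follows.

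The hard part will be the first inclusion $\supp(\mu_{\mathring\Sigma})\subset\supp(\lambda)$: nothing in Theorem~\ref{thm_intro:positive systole} guarantees directly that $\lambda$ already captures all geodesics carrying $\mu_{\mathring\Sigma}$. To close this gap I would combine the orthogonal decomposition along $\Lamref_\mu$ supplied by Theorem~\ref{thm:1}(2) with Theorem~\ref{thm_intro:positive systole} applied to the restriction $\mu_\calR$ on each complementary region $\calR$ of $\Lamref_\mu$: a region with $\mu_\calR\ne 0$ would carry a current of positive systole on $\calR$, so any sequence of closed geodesics witnessing $\Syst(\mu)=0$ could not be localised inside $\calR$, while on the other hand the filling property of $\mu_{\mathring\Sigma}$ prevents escape of mass to regions where $\mu_\calR=0$. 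Iterating this dichotomy is what collapses $\supp(\mu_{\mathring\Sigma})$ onto $\supp(\lambda)$ and completes the argument.
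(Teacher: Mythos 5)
Your direction $(2)\Rightarrow(1)$ is correct but reinvents the wheel: since $\mu_{\mathring\Sigma}$ is a nonzero element of $\MLc(\mathring\Sigma)$ and the orthogonal decomposition gives $i(\mu,\mu_{\mathring\Sigma})=0$, condition (2) of Theorem~\ref{thm_intro:positive systole} already fails, forcing $\Syst(\mu)=0$ immediately. Your Poincar\'e-recurrence/closing-up construction is essentially re-deriving the implication (2)'$\Rightarrow$(4)'$\Rightarrow$(1)' inside that theorem (whose proof uses the Closing Lemma via Proposition~\ref{prop5.1}); it is not wrong, but it duplicates work the equivalence was built to encapsulate.

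The direction $(1)\Rightarrow(2)$ has a genuine gap at exactly the point you flag. Your step ``every geodesic of $\supp(\mu_{\mathring\Sigma})$ meeting $U_i$ must be a leaf of $\lambda_i$, for otherwise it would cross $\lambda_i$'' is false as stated: a geodesic $g\in\supp(\mu_{\mathring\Sigma})$ can lie entirely inside a complementary region of $\lambda$ without crossing any leaf (e.g.\ a diagonal of an ideal polygon, or a geodesic running along a crown). Ruling this out requires showing that any geodesic joining two ideal vertices of a complementary region is $\mu$-short, which is Lemma~\ref{lem6.2}, and that the closed curve bounding a crown is $\mu$-short, which is Proposition~\ref{prop6.1}. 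Both rest on the no-one-sided-atom Lemma~\ref{lem:pencils}, i.e.\ that $\mu$ gives zero mass to pencils $\{b\}\times I$ when $b$ is a conical limit point not the endpoint of a closed leaf. Without this analytic input you cannot exclude such $g$, and in fact $g$ could intersect $\lambda$ tangentially without contributing to $i(\mu,\lambda)$. Your proposed repair via Theorem~\ref{thm:1}(2) and an ``iterated dichotomy'' also does not close the gap: the dichotomy there says that a nonzero $\mu_\calR$ forces every geodesic in $\calR$ to cross $\supp(\mu)$ --- but that is $\supp(\mu)$, not $\supp(\mu_\calR)$, so you cannot conclude that $\mu_\calR$ has positive systole as a current ``on $\calR$'', and the iteration never terminates in a contradiction. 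The paper's route is different and shorter: from $\lambda\in\MLc(\mathring\Sigma)$ with $i(\mu,\lambda)=0$, pass to a minimal component $\Lam'$, use Proposition~\ref{prop6.1} to see its complementary regions are ideal polygons (possibly containing a cusp or bounding $\partial\Sigma$), and then use Lemma~\ref{lem6.2} to show any $g\in\supp(\mu_{\mathring\Sigma})$ outside $\Lam'$ would cross a $\mu$-short chord, contradiction; minimality then gives equality of supports.
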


We say that a geodesic lamination is \emph{surface filling} if 
every complementary region is either an ideal polygon, or an ideal polygon bounding either a
boundary geodesic or a cusp.

\begin{Remark}  The relation between our results and the ones in \cite{EM18} is the following.
	If $\Sigma$ is compact, it follows from Theorem~\ref{thm:1}(1)  that a current $\mu$ of {\em full hull} in the sense of \cite{EM18}
	is one such that $i(\mu,c)>0$ for every closed geodesic $c\subset\mathring{\Sigma}$.
	Then Theorem~\ref{thm_intro:positive systole} and Theorem~\ref{thm_intro:irred and syst 0} 
	show that the dichotomy ``$\Syst(\mu)>0$" or ``$\Syst(\mu)=0$''
	gives for $\mu$ of full hull the dichotomy ``$\mu$ is binding'' or ``$\mu$ is a measured lamination".
	That this is a dichotomy for currents of full hull is the content of \cite[Theorem~3.19]{EM18}.
\end{Remark}

In general we  consider the decompostion 
\bq\label{eq:V}
\Sigma=\bigcup_{v\in\calV_\mu} \Sigma_v\,.
\eq  
as a union of subsurfaces with geodesic boundary  induced by the set $\calE_\mu$   closed $\mu$-short solitary geodesics. 
For any such a subsurface $\Sigma_v$ %let $\mathcal G_v\subset (\partial \H^2)^{(2)}$ be the set of geodesics whose projection lies in the interior $\mathring\Sigma_v$ of $\Sigma_v$, 
and for a geodesic current $\tau$ let 
$$\Syst_{\Sigma_v}(\tau)=\inf\{i(\tau, c)|\,c\subset\mathring\Sigma_v \text{ closed geodesic }\}\,.$$
%while $\Syst(\tau)$ refers to the analogous quantity with the infimum taken over all the geodesics in $\Sigma$. 
Combining Theorem \ref{thm_intro:positive systole} and Theorem \ref{thm_intro:irred and syst 0} we  deduce

\begin{Corollary}\label{cor_intro:dec}
	Let $\mu$ be a geodesic current on a complete hyperbolic surface of finite area $\Sigma=\Gamma\backslash\H^2$,  
	and let  $\calV_\mu$ be as in \eqref{eq:V}.
	We have
	$$\mu=\sum_{v\in \calV_\mu} \mu_{\Sigma_v}+\sum_{c\in \calE_\mu}\lambda_c\delta_{c}\,,$$
	where %$\mu_v$ is supported in $\mathcal G_v$ and 
	$\delta_{c}$ is the geodesic current associated to the closed geodesic $c$. 
	Furthermore, for every $v\in \calV_\mu$ for which $\mu_{\Sigma_v}\neq 0$ precisely one of the following holds:
	\begin{enumerate}
		\item either $\Syst_{\Sigma_v}(\mu_v)>0$,
		\item or $\mu_v$ is the geodesic current associated to a measured lamination %$\wt\Lambda$
		compactly supported in $\mathring\Sigma_v$ and  surface filling in $\Sigma_v$.
	\end{enumerate}
\end{Corollary}

\subsection{Positive systole and a domain of discontinuity in the Weyl chamber length boundary}
\label{s:Intro_TP-boundary}
%\subsection{Domains of discontinuity in the Weyl chamber length  boundary}
Let $\Gamma<\PSL(2,\R)$ be a (not necessarily torsion-free) cocompact
lattice and let $G$ be either $\PSL (n,\R)$ or $\Sp(2m, \R)$. 
Let $\calX(\Gamma,G)$ be the space of $G$-conjugacy classes of representations of $\Gamma$ in $G$
that are Hitchin if $G = \PSL (n,\R)$ or maximal if $G = \Sp(2m, \R)$ (see \S~\ref{sec:7}).
Our objective is to apply our results on currents to the study 
of the action of the mapping class group on the Weyl chamber length  boundary
$\partial\calX(\Gamma,G)$.
Recall (see \cite{Parreau12}) that $\partial\calX(\Gamma,G)$ 
is a compact subset of the space $\P(\mathfrak{C}^\Gamma)$ 
of projective classes of functions from $\Gamma$ to a closed Weyl chamber
$\mathfrak{C}$ of $G$,
and that
a diverging sequence $([\rho_k])_k$ in $\calX(\Gamma,G)$ converges to the
projective class $[L]$ of a nonzero function $L:\Gamma\to \mathfrak{C}$ if and
only if  $[\lambda\circ\rho_k]$ converges to $[L]$ in $\P(\mathfrak{C}^\Gamma)$, where
$\lambda\colon G\to \mathfrak{C}$ is the Jordan projection, see \eqref{eq:wcvtv}.
% the projective classes of the nonzero functions $L$ that are limits of
% sequences $\frac{1}{\lambda_k}\nu\circ\rho_k$ where
% $\nu\colon G\to \mathfrak{C}$ is the Jordan projection, $(\lambda_k)$
% is a diverging sequence of positive reals and $[\rho_k]$ is a
% diverging sequence in $\calX(\Gamma,G)$.
%
Let $\|\,\cdot\,\|$ denote a Weyl group invariant norm on the Cartan subalgebra $\mathfrak a\supset\mathfrak{C}$
and define the systole of a function $L:\Gamma\to \mathfrak{C}$ by
\bqn
\Syst(L):=\inf_{\stackrel{\gamma\in\Gamma}{\gamma\text{ hyperbolic}}}\|L(\gamma)\|\,.
\eqn
Observe that the positive systole subset 
\[\Omega(\Gamma,G):=\{[L]\in\partial\calX(\Gamma,G):\,\Syst(L)>0\}\]
of $\partial\calX(\Gamma,G)$ is well-defined and independent of the choice of the norm $\|\,\cdot\,\|$.

The next result is a consequence of Corollary~\ref{cor_intro:MCG} and \cite[Theorem~1.1]{Martone_Zhang}:

\begin{Corollary}
  \label{cor_intro:pd}  
Let $\Gamma<\PSL(2,\R)$ be a cocompact lattice and let $\calX(\Gamma,G)$ be
the character variety of representations of $\Gamma$ in $G$ that are either Hitchin or maximal.
\begin{enumerate}
\item 
  $\Omega(\Gamma,G)$ is an open subset of $\partial\calX(\Gamma,G)$.

\item 
For every $[L]\in\Omega(\Gamma,G)$ there are constants $0<c_1\leq c_2<\infty$
such that, for every hyperbolic element $\gamma\in\Gamma$,
\bqn
c_1\ell_{\mathrm {hyp}}(\gamma)\leq \|L(\gamma)\|\leq c_2\ell_{\mathrm {hyp}}(\gamma)\,,
\eqn
where $\ell_{\mathrm {hyp}}(\gamma)$ is the translation length of $\gamma$ in $\H$.
\item Assume that $\Gamma$ is torsion-free.  Then the mapping class group $\mcg(S)$ of $S:=\Gamma\backslash\H$
acts properly discontinuously on $\Omega(\Gamma,G)$.
\end{enumerate}
\end{Corollary}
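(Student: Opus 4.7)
The plan is to transfer the statements of Corollary~\ref{cor_intro:MCG} from $\P\calC(\Sigma)$ to $\partial\calX(\Gamma,G)$ through the Martone--Zhang assignment. All three conclusions are invariant under passing to a torsion-free normal finite-index subgroup $\Gamma_0\triangleleft\Gamma$: for (1) and (2) positivity of the systole of $L$ is preserved in both directions, since every hyperbolic $\gamma\in\Gamma$ has a power $\gamma^n\in\Gamma_0$ with $L(\gamma^n)=nL(\gamma)$ (obtained by passage to the limit from $\lambda(g^n)=n\lambda(g)$ on loxodromic matrices), and (3) already assumes $\Gamma$ is torsion-free. Hence we assume throughout that $\Gamma$ is torsion-free and $\Sigma=\Gamma\backslash\H$ is a compact surface.

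By \cite[Theorem~1.1]{Martone_Zhang} there is a continuous $\mcg(\Sigma)$-equivariant map $[\rho]\mapsto[\mu_\rho]$ from $\calX(\Gamma,G)$ to $\P\calC(\Sigma)$, characterized by the property that $i(\mu_\rho,c)=\ell_\rho(\gamma_c)$ for every closed geodesic $c\subset\Sigma$, where $\ell_\rho$ is a preferred linear functional of the Jordan projection $\lambda(\rho(\gamma))$ satisfying the uniform comparability $\ell_\rho(\gamma)\asymp\|\lambda(\rho(\gamma))\|$ on hyperbolic $\gamma$.

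To extend this to the boundary, let $[\rho_k]\to[L]\in\partial\calX(\Gamma,G)$. By compactness of $\P\calC(\Sigma)$ (Proposition~\ref{prop:proper}), after passing to a subsequence $[\mu_{\rho_k}]\to[\mu_L]$ in $\P\calC(\Sigma)$. Weak-$*$ continuity of the intersection against closed geodesics, combined with $[\lambda\circ\rho_k]\to[L]$ in $\P(\mathfrak{C}^\Gamma)$, forces $i(\mu_L,c)=\ell_L(\gamma_c)$ up to a uniform positive scalar, and hence $\Syst(\mu_L)>0$ if and only if $\Syst(L)>0$. One thereby obtains a continuous $\mcg(\Sigma)$-equivariant map $\Omega(\Gamma,G)\to\Omega:=\{[\mu]\in\P\calC(\Sigma):\Syst(\mu)>0\}$. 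Now (1) follows because $\Omega$ is open by Corollary~\ref{cor_intro:MCG}(3), and continuous preimages of open sets are open. Part (2) follows by applying Corollary~\ref{cor_intro:MCG}(2) to $\mu_L$ with $K=\Sigma$ and combining with the bi-Lipschitz equivalence $\ell_L\asymp\|L(\cdot)\|$. Part (3) follows because proper discontinuity of $\mcg(\Sigma)$ on $\Omega$ pulls back through any continuous equivariant map, giving proper discontinuity on $\Omega(\Gamma,G)$.

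The main delicate point is checking that the subsequential limit $\mu_L$ is genuinely nonzero and projectively independent of the subsequence, i.e.\ that the Martone--Zhang assignment is proper into $\P\calC(\Sigma)$ at the boundary. The nondegeneracy $[L]\neq 0$ together with the uniform comparability $\ell_\rho\asymp\|\lambda\circ\rho\|$ prevents a divergent Cartan spectrum from being concentrated in a component that collapses to zero in the limit, ensuring both that $\mu_L\neq 0$ and that its projective class is intrinsic to $[L]$.
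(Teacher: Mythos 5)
Your approach is essentially the same as the paper's: push the problem to the space $\P\calC(S)$ of projectivized currents via the Martone--Zhang assignment and pull back the conclusions of Corollary~\ref{cor_intro:MCG}. The paper realizes this cleanly by factoring through the space of projectivized intersection functions: the restriction map $\calR:\overline{\calL(\calX(\Gamma,G))}\to\P(\R_{\ge 0}^{\Gamma_0})$, $[L]\mapsto[L|_{\Gamma_0}]$, lands inside the image of the intersection map $I:\P\calC(S)\to\P(\R_{\ge 0}^{\Gamma_0})$ (by Martone--Zhang on the interior, then taking closures using compactness of $\P\calC(S)$), and $I$ is a \emph{homeomorphism onto its image} by Otal's theorem, so $I^{-1}\circ\calR$ is a well-defined continuous map whose preimage of $\Omega(S)$ (intersected with $\partial\calX$) is $\Omega(\Gamma,G)$. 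This automatically settles the two points you flag as delicate.

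There are two places where your write-up has genuine gaps. First, the construction of $\mu_L$ by extracting a convergent subsequence: you acknowledge that its projective class must be shown independent of the subsequence and nonzero, but you do not supply the needed ingredient. Independence of the subsequence requires that a current with compact carrier be determined up to scalar by its intersection function on closed geodesics --- this is precisely Otal's theorem, and without it the map $[L]\mapsto[\mu_L]$ is not well-defined, nor is its continuity established (which you need for parts (1) and (3)). The ``prevents a divergent Cartan spectrum from being concentrated in a collapsing component'' sentence asserts the conclusion but is not a proof; the paper's route via $I^{-1}\circ\calR$ is what makes this precise. Second, the opening reduction ``hence we assume throughout that $\Gamma$ is torsion-free'' is too coarse: the openness in (1) is a statement about the topology of $\partial\calX(\Gamma,G)$, and you cannot replace this space by $\partial\calX(\Gamma_0,G)$ without exhibiting a continuous restriction map under which $\Omega(\Gamma,G)$ is the preimage of $\Omega(\Gamma_0,G)$ --- which, again, is essentially what $\calR$ does. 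The observation $L(\gamma^n)=nL(\gamma)$ does correctly transfer positivity of the systole and the bi-Lipschitz constants from $\Gamma_0$ to $\Gamma$, so (2) can be rescued; but (1) needs the continuous map to be in place, not a vague equivalence of the two statements.
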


Note that if $G=\PSL(2,\R)$ and $\Gamma$ is torsion-free,
$\partial\calX(\Gamma,G)$ is the Thurston boundary of the Teichm\"uller
space of $S=\Gamma\backslash \H$ and it is well-known that $\Syst(L)=0$ for
every $[L]\in\partial\calX(\Gamma,G)$.
It is a striking fact that it is not anymore the case when $G$ has higher rank.
% It is well known  that $\Omega(\Gamma,\Sp(2,\R))=\Omega(\Gamma,\PSL(2,\R))=\varnothing$.
% This is however not the case anymore in high rank.  To see this recall that 
% if $G=\PSL(2,\R)^n$, the character variety $\calX(\Gamma,G)$ is the $n$-fold Cartesian product of Teichm\"uller spaces.
% In the forthcoming article \cite{BIPP3} we identify $\partial\calX(\Gamma,G)$ for $G = \SL(2,\R)^n$ 
% and conclude that $\Omega(\Gamma,\Sp(2m,\R)) \not= \varnothing$ for all $n \geq 2$. 
% Incidentally, we show there also that for $G = \PSL(2,\R)^2$, $\Omega(\Gamma,G)$ is precisely 
% the Teichm\"uller space of semi-translation structures on $S$.

% Here we treat another case in which $\Omega(\Gamma,G)\neq\varnothing$.

\begin{Corollary}\label{cor:split}
  Let $S=\Gamma\backslash \H$ be any compact hyperbolic surface,
  and $G=\PSL(n,\R)$ with $n\geq 3$ or $G=\Sp(2m,\R)$ with $m\geq 2$.
Then the positive systole subset $\Omega(\Gamma, G)$ is non-empty.
% $\Omega(\Gamma,\PSL(n,\R))$ is a non-empty open set of discontinuity
% for the $\mcg(S)$-action on the Weyl chamber length  boundary $\partial\calX(\Gamma,\PSL(n,\R))$
% of the $\PSL(n,\R)$-Hitchin component of $S$.
\end{Corollary}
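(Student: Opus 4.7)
The strategy is to produce points of $\Omega(\Gamma,G)$ as translation length functions of $\Gamma$-actions on affine buildings with quasi-isometrically embedded orbit maps, via the explicit construction of \S\ref{subsec:ActB}. Recall that by the Parreau compactification theory, every point of $\partial\calX(\Gamma,G)$ is realized by a length function $L(\gamma)=\lambda_\Delta(\rho_\infty(\gamma))$, where $\rho_\infty\colon \Gamma\to\Isom(\Delta)$ is the ultralimit of a divergent sequence in $\calX(\Gamma,G)$ and $\Delta$ is an asymptotic cone of the symmetric space of $G$---a Euclidean building of type $\widetilde A_{n-1}$ if $G=\PSL(n,\R)$ and of type $\widetilde C_m$ if $G=\Sp(2m,\R)$.

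First, I would reduce positive systole of such an $L$ to a quasi-isometric embedding property of the orbit map. Since $\Gamma$ is cocompact in $\PSL(2,\R)$, the orbit map $\Gamma\to\H$ is a quasi-isometry, hence the word length of $\Gamma$ is bi-Lipschitz to the hyperbolic translation length on hyperbolic (equivalently, infinite-order) elements, and the hyperbolic systole of $\Gamma$ is strictly positive. If the orbit map $\gamma\mapsto\rho_\infty(\gamma)x_0$ at some base point $x_0\in\Delta$ is a quasi-isometric embedding, then for every hyperbolic $\gamma\in\Gamma$,
\[
\|L(\gamma)\| \;=\; \ell_\Delta(\rho_\infty(\gamma)) \;=\; \lim_{k\to\infty}\tfrac{1}{k}\,d_\Delta\bigl(\rho_\infty(\gamma)^k x_0,x_0\bigr) \;\asymp\; |\gamma|_\Gamma \;\asymp\; \ell_{\mathrm{hyp}}(\gamma),
\]
which forces $\Syst(L)>0$ since $\inf_{\gamma}\ell_{\mathrm{hyp}}(\gamma)>0$.

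Next I would invoke \S\ref{subsec:ActB}, which for every compact hyperbolic surface group $\Gamma$ constructs explicit actions on an $\widetilde A_2$-building with quasi-isometrically embedded orbit map, arising as ultralimits of genuine Hitchin representations into $\PSL(3,\R)$ (equivalently, as Hitchin-positive representations into $\PSL(3,\F)$ for a non-Archimedean real closed field $\F$). This settles the case $G=\PSL(3,\R)$. The constructions of \S\ref{subsec:ActB}, or straightforward variants, provide analogous actions on $\widetilde A_{n-1}$-buildings for $\PSL(n,\R)$ with $n\geq 3$ and on $\widetilde C_m$-buildings for $\Sp(2m,\R)$ with $m\geq 2$.

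The main obstacle, and the real content of the proof, is the QI-embedding assertion for the building action. Merely exhibiting Hitchin or maximal representations with unbounded Jordan projection produces points of $\partial\calX(\Gamma,G)$, but such points generically have vanishing systole (as happens for all Thurston boundary points in the rank-one case $G=\PSL(2,\R)$). Securing a QI-embedded orbit map at the ultralimit requires a careful use of positivity, most likely via combinatorial control of positive triples of flags and of the apartments they determine in the building; this is precisely where the substantive work of \S\ref{subsec:ActB} lies.
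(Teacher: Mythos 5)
Your proposal has a genuine circularity at its core. You recognize that the hard step is to establish that the $\Gamma$-orbit map into the building is a quasi-isometric embedding, and you delegate this to \S\ref{subsec:ActB}, guessing that the paper proves it via ``combinatorial control of positive triples of flags.'' In fact the logical flow in the paper is exactly the reverse: Corollary~\ref{cor_intro:triangle}(2) (QI-embedded orbit maps) is \emph{deduced} from Corollary~\ref{cor_intro:triangle}(1) (the limit length function lies in $\Omega(\Delta,\SL(n,\R))$), which in turn rests on Theorem~\ref{thm_intro:triangle} (positive systole for triangle groups) and Corollary~\ref{cor_intro:pd}(2). So \S\ref{subsec:ActB} offers no independent route to the QI-embedding — it \emph{uses} positive systole to get displacing actions and hence QI orbits, via~\cite{DGLM}. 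Invoking it to go the other way leaves the central claim of Corollary~\ref{cor:split} unproven. (Your first paragraph — that a QI-embedded orbit at the ultralimit forces $\Syst(L)>0$ — is a correct implication, but it is not the one the paper uses and you have not supplied the QI-embedding independently.) There are secondary inaccuracies too: \S\ref{subsec:ActB} treats representations of triangle groups $\Delta$, not of surface groups $\Gamma$, and the explicit constructions there are only for $\SL(3,\R(X))$; the extension to all $n\geq 3$ and to $\Sp(2m,\R)$, $m\geq 2$, is not a ``straightforward variant.''

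What the paper actually does is quite different and more indirect. One fixes a triangle group $\Delta$ (specifically $\Delta(3,3,4)$, or $\Delta(3,4,4)$ for $\Sp(4,\R)$), shows via~\cite{ALS18} that $\calX(\Delta,G)$ is a positive-dimensional cell, hence non-compact, so $\partial\calX(\Delta,G)\neq\varnothing$. The key input is then Theorem~\ref{thm_intro:triangle}: \emph{every} boundary point of $\partial\calX(\Delta,G)$ has positive systole. This is proved (Theorem~\ref{thm:7.2}) by pure current-theoretic arguments: any $\Delta$-invariant current $\mu$ cannot correspond to a measured lamination (an $\R$-tree fixed-point argument using that the generators $a,b$ of $\Delta$ are torsion and $ab$ is elliptic), hence $\calE_\mu=\varnothing$, hence $i(\mu,c)>0$ for all closed $c$ by Theorem~\ref{thm:1}(1), and if $\Syst(\mu)=0$ then Theorem~\ref{thm_intro:irred and syst 0} would force $\mu$ to be a measured lamination — contradiction. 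Finally, since every compact hyperbolic surface group embeds with finite index in $\Delta$ (through the genus-2 subgroup), one restricts a positive-systole boundary length function of $\Delta$ to $\Gamma$; restriction can only increase systole, giving $\Omega(\Gamma,G)\neq\varnothing$. None of the decomposition theorem, the $\R$-tree argument, or the triangle-group reduction appears in your proposal, and these are the actual substance of the proof.
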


It is a natural question whether length functions if
$\Omega(\Gamma,G)$ correspond to some kind of geometric structures on
the surface $S=\Gamma\backslash \H$.
In the case where $G=\PSL(2,\R)\times\PSL(2,\R)$, one can show using
\cite{GaMa91} that $\Omega(\Gamma,G)$ corresponds to 
% or holomorphic quadratic differentials
the Teichm\"uller space of semi-translation structures $S$. 
%\marginpar{BP I am not sure if we can really prove this: I don't think that we know how to show that the map that to a flat surface associates its marked length with respect to the $\ell ^1$ norm is injective\\ {\color{red} AI+MB This is not what we are saying because our length functions are Weyl chambers valued}} 
We will study this case in further details in a forthcoming paper.

%%% Entropy
Another interesting feature of the positive systole subset is its
relationship with entropy.  This uses \cite[Corollary~1.5]{Martone_Zhang}.

\begin{Corollary}\label{cor_intro:1.10}
Let $S=\Gamma\backslash\H$ be a compact hyperbolic surface
and $G=\PSL(n,\R)$ or $G=\Sp(2m,\R)$.
  Assume that $([\rho_k])_k$ is a sequence in $\calX(\Gamma,G)$
  converging to a point of  $\Omega(\Gamma,G)$. Then we have for the
  entropy $h(\rho_k)$ of $\rho_k$ :
  \[\lim_{k\to\infty}h(\rho_k) =0 \;.\]
\end{Corollary}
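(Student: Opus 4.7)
The strategy combines Corollary \ref{cor_intro:pd}(2) with \cite[Corollary~1.5]{Martone_Zhang}, which bounds the entropy of a Hitchin or maximal representation in terms of a lower bi-Lipschitz constant between its length function and the hyperbolic one. Morally, because $[L]\in\Omega(\Gamma,G)$ the limit function $\|L\|$ is bi-Lipschitz to $\ell_{\mathrm{hyp}}$, and this comparison transfers to the $\rho_k$ with a constant that diverges; the Martone--Zhang estimate then forces $h(\rho_k)\to 0$.

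More concretely, since $([\rho_k])_k$ diverges in $\calX(\Gamma,G)$ and converges projectively to $[L]$, choose a representative $L$ and scalars $t_k>0$ with $t_k\to\infty$ such that $L_k:=\lambda\circ\rho_k/t_k$ converges pointwise on $\Gamma$ to $L$. By Corollary \ref{cor_intro:pd}(2) there is $\kappa>0$ with $\|L(\gamma)\|\geq \kappa\,\ell_{\mathrm{hyp}}(\gamma)$ for every hyperbolic $\gamma$. The key step is upgrading this to a uniform bound at finite $k$: for every $\varepsilon>0$ and all $k$ sufficiently large,
\[\|\lambda(\rho_k(\gamma))\|\geq (\kappa-\varepsilon)\,t_k\,\ell_{\mathrm{hyp}}(\gamma)\qquad\text{for every hyperbolic }\gamma\in\Gamma.\]
To obtain this I would realize each $\rho_k$ through its associated geodesic current $\mu_k$ on $S$ (as in \cite{Martone_Zhang}), so that $i(\mu_k,\gamma)=\|\lambda(\rho_k(\gamma))\|$ for every closed geodesic. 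The projective currents $[\mu_k/t_k]$ converge in $\P\calC(S)$ to a current $\mu_\infty$ representing $L$, and $\Syst(\mu_\infty)>0$ since $[L]\in\Omega(\Gamma,G)$. By continuity of the systole (Corollary \ref{cor_intro:MCG}(1)), $\Syst(\mu_k/t_k)\geq \Syst(\mu_\infty)/2$ for all $k$ large, and Corollary \ref{cor_intro:MCG}(2) applied with $K=S$ (using that $S$ is closed) delivers the desired uniform lower bound.

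Finally \cite[Corollary~1.5]{Martone_Zhang} provides an upper bound of the form $h(\rho_k)\leq h_{\mathrm{hyp}}(S)/((\kappa-\varepsilon)\,t_k)$, which tends to $0$ as $k\to\infty$. The main obstacle is the uniformity passage in the middle step: pointwise convergence of the normalized length functions does not a priori yield a simultaneous bi-Lipschitz comparison across all conjugacy classes, and it is precisely here that the continuity of the systole function on $\calC(S)$, together with the bi-Lipschitz estimate of Corollary \ref{cor_intro:MCG}(2), proves decisive.
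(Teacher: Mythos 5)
Your overall plan is correct and is essentially the same as the paper's: realize the $\rho_k$ by geodesic currents $\mu_k$ with $i(\mu_k,c)=\|\lambda(\rho_k(\gamma))\|$, show that the systoles $\Syst(\mu_k)$ diverge, and then invoke \cite[Corollary~1.5]{Martone_Zhang}. But there are two problems in how you execute the middle step, one of which is a genuine gap.

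First, you state the Martone--Zhang input in the wrong form. \cite[Corollary~1.5]{Martone_Zhang} is a \emph{systolic inequality}: $h(\rho_k)\,\Syst(\mu_k)\leq C$ for a constant $C$ depending only on $S$. It is not an estimate of the shape $h(\rho_k)\leq h_{\mathrm{hyp}}(S)/((\kappa-\varepsilon)t_k)$ in terms of a lower bi-Lipschitz constant. To exploit the systolic inequality all you need is $\Syst(\mu_k)\to\infty$, not a pointwise comparison with $\ell_{\mathrm{hyp}}$.

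Second, and this is the real gap, the step where you try to upgrade $\Syst(\mu_k/t_k)\geq\Syst(\mu_\infty)/2$ to a \emph{uniform} lower bi-Lipschitz bound via Corollary~\ref{cor_intro:MCG}(2) does not work: that corollary produces constants $c_1,c_2$ that depend on the individual current $\mu$, and nothing in its statement (or proof) says that $c_1$ can be bounded below by a positive quantity depending only on $\Syst(\mu)$. Applied to the varying currents $\mu_k/t_k$ it gives $k$-dependent constants, not a uniform $\kappa-\varepsilon$. Fortunately this whole step is unnecessary: from $\Syst(\mu_k/t_k)\geq\Syst(\mu_\infty)/2>0$ for $k$ large and $t_k\to\infty$ you get immediately, by homogeneity of the systole,
\[
\Syst(\mu_k) \;=\; t_k\,\Syst(\mu_k/t_k)\;\geq\;\tfrac12\,t_k\,\Syst(\mu_\infty)\;\longrightarrow\;\infty\,,
\]
and then $h(\rho_k)\leq C/\Syst(\mu_k)\to 0$. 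This is exactly what the paper does (with $i(\mu_k,b)$ for a binding multicurve $b$ playing the role of your $t_k$); once you remove the spurious appeal to Corollary~\ref{cor_intro:MCG}(2), your argument collapses to the paper's. You should also flag the small point that passing from pointwise convergence of the normalized length functions to weak-$*$ convergence of the currents $\mu_k/t_k$ in $\calC(S)$ uses compactness of $\P\calC(S)$ together with Otal's theorem that the marked length spectrum map is a homeomorphism onto its image; this is implicitly invoked but not stated.
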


The first examples of such sequences were obtained by X. Nie \cite{Nie} for $\PSL(3,\R)$
and by T. Zhang in \cite{Tengren1} for $\PSL(n,\R)$ (see also \cite{Tengren2}). 
Examples~\ref{ex-334} and \ref{ex-pqr} below satisfy this property.

Corollary \ref{cor:split}  is in fact a consequence  of next theorem (Theorem~\ref{thm_intro:triangle}) 
concerning the Weyl chamber length  boundary of the Hitchin component of a hyperbolic triangle group
\bqn
\Delta:=\Delta(p,q,r)=\left<a,b:\, a^p=b^q=(ab)^r=e\right>\,.
\eqn
%is a hyperbolic triangle group and $G=\SL(3,\R)$.  
% Recall from \cite{Choi_Goldman_05} that the $\SL(3,\R)$-Hitchin component\marginpar{$3\to n$} of $\Delta$ is not reduced to a point
% if and only if $\min\{p,q,r\}\geq3$, in which case it is homeomorphic to $\R^2$.
% In particular it is immediate from the definition of \eqref{eq:ThP} that $\partial\calX(\Delta,\SL(3,\R))\neq\varnothing$.

\begin{Theorem}\label{thm_intro:triangle}  Let $\Delta$ be a hyperbolic triangle group and $G=\PSL(n,\R)$ 
or $G=\Sp(2m,\R)$. Then for every $[L]\in\partial\calX(\Delta,G)$
\bqn
\Syst(L)>0\,.
\eqn
\end{Theorem}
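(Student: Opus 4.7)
The approach is to pass from $\Delta$ to a torsion-free finite-index surface subgroup, push $L$ to a geodesic current on the cover, and then exploit both the symmetry under the deck group and the combinatorial triviality of measured laminations on a sphere with three cone points. By Selberg's lemma, fix a normal torsion-free finite-index subgroup $\Gamma_0\triangleleft\Delta$; the quotient $S=\Gamma_0\backslash\H$ is a compact hyperbolic surface acted on by the finite group $F=\Delta/\Gamma_0$, with quotient the $(p,q,r)$-orbifold $\Delta\backslash\H$. Assume for contradiction that $\Syst(L)=0$ for some $[L]\in\partial\calX(\Delta,G)$, and pick $\rho_k\in\calX(\Delta,G)$ with $[\lambda\circ\rho_k]\to[L]$. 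The restrictions $\rho_k|_{\Gamma_0}$ remain Hitchin, respectively maximal; by the current construction of \cite{Martone_Zhang} (extended in the forthcoming \cite{BIPP2}) they give geodesic currents $\mu_k$ on $S$ with $i(\mu_k,c)=\|\lambda(\rho_k(\gamma_c))\|$ for every closed geodesic $c$. Dividing by the divergence scale used to define $[L]$ and passing to a weak-$*$ subsequential limit in $\calC(S)$ yields a nonzero current $\mu$ on $S$ with $i(\mu,c)=L(\gamma_c)$ for every hyperbolic $\gamma_c\in\Gamma_0$. Since every hyperbolic element of $\Delta$ has a power in $\Gamma_0$ and length functions scale under powers, $\Syst(L)=0$ forces $\Syst(\mu)=0$.

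Next I would use $F$-symmetry. For $g\in\Delta$ and hyperbolic $\gamma\in\Gamma_0$, the identity $i(g_*\mu,\gamma)=i(\mu,g^{-1}\gamma g)=L(g^{-1}\gamma g)=L(\gamma)=i(\mu,\gamma)$ shows that $\mu$ is $F$-invariant. By uniqueness of the canonical lamination in Theorem~\ref{thm:1}, the set $\calE_\mu$ is then $F$-invariant too, hence projects to a disjoint family of simple closed hyperbolic geodesics on the triangle orbifold $\Delta\backslash\H$. But on a sphere with three cone points every simple closed curve bounds a disk containing at most one cone point, so it is either null-homotopic or peripheral to a cone point; peripheral loops correspond to torsion elements of $\Delta$, not to hyperbolic geodesics. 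Hence $\calE_\mu=\emptyset$, and $\mu\neq 0$ together with Theorem~\ref{thm:1}(1), applied to the unique complementary region $S$, yields $i(\mu,c)>0$ for every closed geodesic $c\subset S$.

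Finally, Theorem~\ref{thm_intro:irred and syst 0} applied to $\mu$ with $\Syst(\mu)=0$ forces $\mu$ to be a nonzero minimal surface-filling compactly supported measured lamination on $S$. By $F$-invariance, $\mu$ descends to a nonzero measured lamination on the triangle orbifold, which is the desired contradiction: the space of compactly supported measured laminations on a hyperbolic sphere with three cone points is trivial, since there are no essential simple closed curves to carry Dirac masses and no pseudo-Anosov dynamics exists (the orbifold mapping class group is finite), or equivalently $\dim\ML=2\cdot 3-6=0$. The hard part will be producing $\mu$ with the correct properties: one must verify that the Martone--Zhang current construction is compatible with finite covers, that the weak-$*$ limit exists with the normalization defining $[L]$, and that the intersection identity passes to the limit---this is exactly the content anticipated from \cite{BIPP2}. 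Once $\mu$ is in hand, the remainder is a clean combination of Theorems~\ref{thm:1} and~\ref{thm_intro:irred and syst 0} with the triviality of laminations on the triangle orbifold.
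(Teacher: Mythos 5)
Your overall skeleton is the same as the paper's: pass to a normal torsion-free finite-index $\Gamma_0\triangleleft\Delta$, extract from $[L]$ a nonzero $\Delta$-invariant current $\mu\in\calC(S)^\Delta$ whose intersection function realizes $L|_{\Gamma_0}$, show $\calE_\mu=\varnothing$ so that Theorem~\ref{thm:1}(1) gives $i(\mu,c)>0$ for all closed $c\subset S$, and then invoke Theorem~\ref{thm_intro:irred and syst 0} to conclude that $\Syst(\mu)=0$ would force $\mu$ to be a measured lamination. (The paper sets this up through Corollary~\ref{cor:7.1} and the closedness of $\calC(S)^\Delta$ in $\calC(S)$; your version via $g_*\mu = \mu$ from conjugation-invariance of $L$ is an equivalent route.)

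Where you genuinely diverge is in how you rule out nonzero $\Delta$-invariant measured laminations and how you show $\calE_\mu=\varnothing$. You argue on the orbifold: no essential simple closed curves on a sphere with three cone points, and $\ML$ of the triangle orbifold is zero-dimensional. The paper instead runs an $\R$-tree fixed-point argument (Theorem~\ref{thm:7.2}): the dual tree $\calT(\tau)$ of any $\Delta$-invariant measured lamination $\tau$ is complete, $a$, $b$ and $ab$ are torsion so each has a nonempty fixed set, Serre's lemma gives a global fixed point for $\Delta$, and this contradicts the fact that some hyperbolic $\gamma\in\Gamma_0$ translates along an axis of length $i(\tau,c_\gamma)>0$. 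The paper then applies this once directly to a hypothetical measured-lamination $\mu$, and once to $\tau=\sum_{c\in\calE_\mu}\delta_c$ to get $\calE_\mu=\varnothing$. The $\R$-tree route has the virtue of being self-contained and uniform: it never needs to speak of curves on the orbifold at all.

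Two gaps in your version deserve attention. First, your argument that the image of $\calE_\mu$ on the orbifold consists of simple closed curves separating off disks with at most one cone point tacitly assumes that no geodesic of $\calE_\mu$ projects to a closed geodesic \emph{through} a cone point of order $2$; such geodesics do exist on $(2,q,r)$-orbifolds, their image is a geodesic arc between cone points rather than an embedded circle, and the disk-separation reasoning does not apply to them. Second, the statement ``$\dim\ML = 2\cdot 3-6=0$'' is the closed-surface or puncture formula, which does not apply verbatim to cone points; the triviality of $\ML$ for a triangle orbifold is true, but its justification is essentially equivalent to showing that any $\Delta$-invariant measured lamination on $\H$ yields a $\Delta$-action on a complete $\R$-tree with no global fixed point, which is exactly the paper's argument. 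So the orbifold route, made rigorous, ends up running through the same $\R$-tree lemma; stating it in those terms from the start is cleaner and eliminates the cone-point subtlety. The parts of your sketch concerning the construction of $\mu$, its $F$-invariance, and the final appeal to Theorems~\ref{thm:1} and~\ref{thm_intro:irred and syst 0} are correct and match the paper.
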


% To see why Corollary~\ref{cor:split} is an immediate consequence of Theorem~\ref{thm_intro:triangle} 
% notice that $\Delta(3,3,4)$ has a subgroup of index $24$ that is torsion-free and represents a genus $2$ surface.
% %The second assertion follows then from the fact that 
% Since the fundamental group of any compact hyperbolic surface
% can be embedded into the fundamental group of a compact hyperbolic surface of genus $2$.

\subsection{Actions on buildings}\label{subsec:ActB}
Recall that any projectivized function $[L]$ in
$\partial\calX(\Gamma,G)$ is the $\mathfrak{C}$-valued length function
of an action of $\Gamma$ on an affine Bruhat-Tits building, that is
typically not simplicial, see \cite{Parreau12}. In the case where $[L]$ has positive
systole, that is $[L]\in \Omega(\Gamma,G)$, by Corollary \ref{cor_intro:pd}(2)
the action of $\Gamma$ is displacing in the sense of \cite{DGLM}, and in
particular orbit maps are quasi-isometric embeddings.

Explicit examples of $\Delta$-actions on a simplicial %$\tilde A_2$-
building whose length function belongs to $\partial\calX(\Delta,\SL(n,\R))$ can
be obtained as follows from representations $\rho:\Delta\to\SL(n,\R(X))$, 
where $\R(X)$ is the field of rational functions.
 Let $\K=\R[[X^{-1}]]$ be the field of Laurent series endowed with its
 canonical non-Archimedean $\Z$-valued valuation $v$ for which $v(X)=-1$.
 Let  $B_n(\K)$  be the Bruhat-Tits building of $\SL(n,\K)$.
 The $\mathfrak{C}$-valued length of $g$ in $\SL(n,\K)$ is its Jordan projection 
 \[\lambda(g)=(-v(a_1), \ldots, -v(a_n))\]
 where $a_1,\ldots, a_n$ are the eigenvalues of $g$ (see \S~\ref{sec:7}).

 % Let 
% \bqn
% \nu\colon\SL(n,\R(X))\to\mathfrak{C}
% \eqn 
% be the corresponding Jordan projection.
%
%  It follows by construction that if $\|\,\cdot\,\|_2$ denotes the Euclidean norm on $\mathfrak a$,
% then for $g\in\SL(n,\R(X))$, $\|\nu(g)\|_2$ is the translation length of $g$ acting
% on the building $B_n(\R(X))$ with respect to the CAT(0)-metric.

 \begin{Corollary}\label{cor_intro:triangle}  
 Assume that $\rho\colon\Delta\to\SL(n,\R(X))$ is a representation such that 
\be
\item[(i)] $\tr\rho(\gamma_0)\in\R(X)$ has a pole at infinity for some $\gamma_0\in\Delta$, and
\item[(ii)] for all $t\in\R$ large enough,
  the specialization $\rho_t$ of $\rho$ at $t$ is a Hitchin representation.
\ee
Then
\be
\item  $[\lambda\circ\rho]\in\Omega(\Delta,\SL(n,\R))$.

\item
%   the action on $B_n(\R(X))$ of every
%torsion-free subgroup $\Gamma<\Delta$ of finite index is displacing in the sense of \cite{DGLM}.  
% and there exists $0<c_1\leq c_2<\infty$ such that for every $\gamma\in\Delta$,
% \bq\label{eq:displ}
% c_1\ell_{\mathrm {hyp}}(\gamma)\leq\|\nu(\rho(\gamma))\|_2\leq c_2\ell_{\mathrm {hyp}}(\gamma)\,.
% \eq
Any $\Delta$-orbit in $B_n(\R(X))$ is a quasi-isometric embedding.
% \item If $\Syst(\rho_t)$ is the systole of the length function $\gamma\mapsto\|\nu\circ\rho_t(\gamma)\|$, then
% \bqn
% \lim_{t\to\infty}\frac{\Syst(\rho_t)}{\ln t}>0\,.
% \eqn
 \ee
\end{Corollary}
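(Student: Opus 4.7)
My plan is to exhibit $[\lambda\circ\rho]$ as the Thurston--Parreau limit of the sequence of specializations $([\rho_t])_{t\to +\infty}$, then invoke Theorem~\ref{thm_intro:triangle} to obtain positive systole, and finally deduce the quasi-isometric orbit map statement by combining Corollary~\ref{cor_intro:pd}(2) with the general displacing/orbit-map principle from \cite{DGLM}.

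The crux is the asymptotic comparison, for each fixed $\gamma\in\Delta$, of the real Jordan projection of $\rho_t(\gamma)\in\SL(n,\R)$ with the non-Archimedean Jordan projection of $\rho(\gamma)\in\SL(n,\R(X))\subset\SL(n,\K)$. Writing the eigenvalues of $\rho(\gamma)$ as $a_1,\dots,a_n$ in a suitable algebraic extension of $\K$ on which the valuation $v$ extends canonically, each $t\mapsto a_i(t)$ is an algebraic function of $t$ satisfying $\log|a_i(t)|=(-v(a_i))\log t+O(1)$ as $t\to +\infty$. After reordering the eigenvalues of $\rho_t(\gamma)$ by decreasing modulus and those of $\rho(\gamma)$ by decreasing $-v(a_i)$, one obtains
\bqn
\frac{\lambda(\rho_t(\gamma))}{\log t}\;\xrightarrow[t\to +\infty]{}\;\lambda(\rho(\gamma))
\eqn
in $\mathfrak{C}$, so that $[\lambda\circ\rho_t]\to[\lambda\circ\rho]$ in $\P(\mathfrak{C}^{\Delta})$ provided the limit function is nontrivial. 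Non-triviality is exactly hypothesis (i): the pole at infinity of $\tr\rho(\gamma_0)=\sum_i a_i(\gamma_0)$ forces $\max_i(-v(a_i(\gamma_0)))>0$, hence $\lambda(\rho(\gamma_0))\neq 0$. By hypothesis (ii) each $\rho_t$ is in the Hitchin component, and since $\|\lambda(\rho_t(\gamma_0))\|\to\infty$ the sequence $([\rho_t])$ leaves every compact subset of $\calX(\Delta,\SL(n,\R))$. The characterization of Thurston--Parreau convergence recalled in \S\ref{s:Intro_TP-boundary} then yields $[\lambda\circ\rho]\in\partial\calX(\Delta,\SL(n,\R))$.

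Theorem~\ref{thm_intro:triangle} now applies to conclude that $\Syst(\lambda\circ\rho)>0$, so $[\lambda\circ\rho]\in\Omega(\Delta,\SL(n,\R))$, proving (1). For (2), Corollary~\ref{cor_intro:pd}(2) supplies constants $0<c_1\le c_2<\infty$ with
\bqn
c_1\,\ell_{\mathrm{hyp}}(\gamma)\le\|\lambda(\rho(\gamma))\|\le c_2\,\ell_{\mathrm{hyp}}(\gamma)
\eqn
for every hyperbolic $\gamma\in\Delta$. Since $\|\lambda(\rho(\gamma))\|$ is the translation length of $\rho(\gamma)$ on the CAT(0) building $B_n(\K)$ containing $B_n(\R(X))$ and $\ell_{\mathrm{hyp}}$ is bi-Lipschitz to the word metric on the cocompact lattice $\Delta$ (Milnor--Schwarz), the $\Delta$-action on $B_n(\R(X))$ is displacing in the sense of \cite{DGLM}; by their general principle any $\Delta$-orbit is therefore a quasi-isometric embedding.

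The main technical obstacle is the asymptotic step: carefully matching eigenvalues of $\rho(\gamma)\in\SL(n,\K)$ (which a priori live in an algebraic extension of $\K$) with those of $\rho_t(\gamma)\in\SL(n,\R)$ for $t$ large, in a manner that respects orderings and multiplicities used to define the two Jordan projections and that is uniform enough in $\gamma$ to give convergence of the projective classes in $\P(\mathfrak{C}^{\Delta})$. Once this is in place, the remainder of the proof is an assembly of already-stated results.
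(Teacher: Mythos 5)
Your proposal is correct and follows essentially the same approach as the paper: show $\lambda(\rho_t(\gamma))/\ln t\to\lambda(\rho(\gamma))$ so that $[\lambda\circ\rho]\in\partial\calX(\Delta,\SL(n,\R))$, invoke Theorem~\ref{thm_intro:triangle} for positive systole, and combine Corollary~\ref{cor_intro:pd}(2) with the DGLM displacing criterion for the orbit-map statement. The asymptotic step you flag as the crux is made rigorous in the paper via Puiseux's theorem applied to eigenvalues viewed in the real closure $\overline{\R(X)}^r\subset\overline\K^r$, and the identification of $\|\lambda(g)\|_2$ with the translation length in $\calB_n(\K)$ that you use is itself a separate lemma (Lemma~\ref{lem:7.3}) proved via Jordan decomposition over a splitting field.
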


%In the above corollary, $\Syst(\rho_t)$ refers to the systole of the length function $\gamma\mapsto\|\nu\circ\rho_t(\gamma)\|$,
%where, by an abuse of notation, $\nu$ also refers to the Jordan projection $\SL(n,\R)\to\mathfrak{C}$.

% \begin{Remark}
% \be
% \item The property \eqref{eq:displ} shows that the action on $B_n(\R(X))$ of $\Delta$ is displacing in the sense of \cite{DGLM}.  
% \item  If $[\rho]\in\calX(\Gamma,\PSL(2,\R))$, it is well known that its entropy $h(\rho)\equiv1$.
% Applying  \cite[Corollary~1.5]{Martone_Zhang} we deduce that for the entropy $h(\rho_t)$ of the representations in Corollary~\ref{cor_intro:triangle},
% \bqn
% \lim_{t\to\infty}h(\rho_t)=0\,.
% \eqn
% Examples of sequences of  $\SL(3,\R)$-Hitchin representations with this property were first obtained in \cite{Zhang15}.
% \ee
% \end{Remark}

We now give explicit examples of representations of triangle groups
satisfying the hypotheses of Corollary~\ref{cor_intro:triangle}.

\begin{Example}
  \label{ex-334}
  For $\Delta=\Delta(3,3,4)$,
%For an explicit example satisfying the hypotheses of
%Corollary~\ref{cor_intro:triangle},
  define
\bqn
\ba
\rho(a)
&=\bpm
 0&0&1\\1&0&0\\0&1&0
\epm
\\
\rho(b)
&=\bpm
1&2-X+X^2&3+X^2\\0&-2+2X-X^2&-1+X-X^2\\0&3-3X+X^2&(-1+X)^2
\epm\,.
\ea
\eqn
According to the main result of \cite{LRT},  $\rho_t$ belongs to the $\SL(3,\R)$-Hitchin component of $\Delta=\Delta(3,3,4)$ for every $t\in\R$.
In addition one verifies that 
\bqn
\tr(\rho(a^{-1}b))=2X^2-3X+6\,.
\eqn

\end{Example}

\begin{Example} 
  \label{ex-pqr}
The following example is due to Goldman, \cite[\S 6]{Goldman_boulder}.
  Let $\Delta=\Delta(p,q,r)$ with $\min(p,q,r) \geq 3$.
%For an explicit example satisfying the hypotheses of
%Corollary~\ref{cor_intro:triangle},
  Consider the following matrix with coefficients in $\R(X)$, where $\epsilon_s:=\cos\left(\frac{2\pi}{s}\right)$ for $s>0$,

  \bqn
  B(X)
=
\bpm
1&-X^{-1}\epsilon_p&-\epsilon_q\\
-X\epsilon_p& 1& -\epsilon_r\\
-\epsilon_q & -\epsilon_r& 1
\epm
\eqn
Define $\rho(r_i): =-\Id+2B(X)e_i {}^te_i$ where $e_1,e_2,e_3$ are the
canonical basis vectors of $\R(X)^3$. Then
\bqn
\ba
\rho(a)
&=\rho(r_1)\rho(r_2)
=\begin{pmatrix}
	-1&-2X^{-1}\epsilon_p&0\\
	2X\epsilon_p&-1+4\epsilon_p^2&0\\
	2\epsilon_q&2\epsilon_r+4X^{-1}\epsilon_p\epsilon_q&1
  \end{pmatrix}\\
\rho(b)
&=\rho(r_2)\rho(r_3)
=\begin{pmatrix}
	1&2X^{-1}\epsilon_p&2\epsilon_q+4X^{-1}\epsilon_p\epsilon_r\\
	0&-1&-2\epsilon_r\\
	0&2\epsilon_r&-1+4\epsilon_r^2
  \end{pmatrix}
\ea
\eqn
define a representation
$\rho: \Delta \to \SL(3,\R(X))$,
whose specialization $\rho_t$ at all $t>0$  is Hitchin.
In addition a computation gives 
\bqn
\tr(\rho(a^{-1}b))=
  8\epsilon_p\epsilon_q\epsilon_r(X+X^{-1})+ 16\epsilon_p^2\epsilon_r^2+4\epsilon_q^2-1  
\eqn
hence $\rho$ satisfies the hypotheses of
Corollary~\ref{cor_intro:triangle}, provided $p,q,r\neq4$.
\end{Example}

\subsection{Outline of the paper}
After some preliminaries on currents in \S~\ref{sec:2}, we study in \S~\ref{sec:3} a general set of geodesics $A$ in $\calG(\H)$ 
and associate to it a lamination using the intersection graph of $A$. If $A$ is invariant under $\Gamma$, we deduce, 
using the structure of complementary regions of laminations in $\Sigma$, general results from which Theorem~\ref{thm:1} follows. 

The main goal of \S~\ref{sec:4} is to show that the systole of a current can be computed using only simple closed geodesics, 
provided $\mathring{\Sigma}$ is not the thrice punctured sphere. 
To this end we associate to any geodesic current on $\H$ a pseudo-distance on $\H$ 
that is a modification of a pseudo-distance introduced by Glorieux \cite{Glorieux} and which has the advantage of being additive on colinear triples of points. 
When $\mu$ is a geodesic current on a hyperbolic surface $S=\Gamma\backslash  \H$, 
this pseudo-distance leads to a pseudo-length for paths and closed curves on $S$ 
and the main point consists then in showing a Length-Shortening-Under-Surgery Lemma (Lemma~\ref{lem4.9}).
This is essential in the proof of Theorem~\ref{thm_intro:positive systole} and Theorem~\ref{thm_intro:irred and syst 0}.

In \S~\ref{sec:5} we deal with currents of positive systole. 
The main point in the proof of Theorem~\ref{thm_intro:positive systole} consists in showing 
that positive systole currents do not admit $\mu$-short recurrent geodesics. 
This is shown in Proposition~\ref{prop5.1} using the classical Closing Lemma.

In \S~\ref{sec:6}, we prove Theorem~\ref{thm_intro:irred and syst 0}, 
which follows essentially from a study of geodesic laminations consisting of $\mu$-short geodesics. 

In \S~\ref{sec:7}  we apply the results on currents to the study of the Weyl chamber length  boundary
of the Hitchin or maximal components of a cocompact lattice $\Gamma<\PSL(2,\R)$.
Beside the results of \cite{Martone_Zhang}, an essential input is Theorem~\ref{thm:7.2}
establishing that for a hyperbolic triangle group $\Delta$, 
$\Delta$-invariant non-vanishing currents have positive systole.
The basis for the construction of the explicit examples in \S~\ref{subsec:ActB} is Corollary~\ref{cor_intro:triangle},
which relies on Theorem~\ref{thm:7.2} as well as on Puiseux's theorem 
on the representability of elements of a specific real closure of $\R(X)$ by convergent Puiseux series.

\section{Preliminaries on currents}\label{sec:2}
\label{sec:preliminaries}
% We let $\partial\H$ be the boundary of $\H$ which is a circle endowed 
% with its orientation and corresponding cyclic ordering of triples of
% points. 

% We identify the space of oriented geodesics in $\H^2$ with the space $(\bH^2)^{(2)}$ of pairs of distinct  points. 

%%% def current
A  \emph{geodesic current} is a  positive Radon measure on 
the space $\calG(\H)$ of unoriented, unparametrized, geodesics in 
$\H$. The topology on $\calG(\H)$ is obtained by identifying
this space with the quotient by the flip $\sigma(a,b)=(b,a)$
of the space $(\bH)^{(2)}:=(\bH\times \bH) \smallsetminus \Delta$ of pairs of distinct
points in the boundary $\bH$ of the hyperbolic plane $\H$.
Via this identification we will think of a geodesic current as
a $\sigma$-invariant positive Radon measure on the locally compact space $(\bH)^{(2)}$.

Let $\Gamma<\PSL(2,\R)$ be a torsion-free discrete subgroup and 
$\Sf=\Gamma\backslash \H$ be the quotient hyperbolic surface.
We denote by $\pi:\H\to\Sf$ the corresponding projection.

A  \emph{geodesic current on $\Sf$} is a $\Gamma$-invariant geodesic current. 

\begin{Examples}
\label{ex: basic currents}
The following examples of geodesic currents will play an important role in the rest of the paper:
\noindent
\begin{enumerate} 
\item The \emph{Liouville current} $\calL$ is the unique (up to positive scaling)  
$\PSL(2,\R)$-invariant Radon measure on $(\bH)^{(2)}$.  
It is of course $\Gamma$-invariant for every subgroup $\Gamma<\PSL(2,\R)$.
% In other words it is the only geodesic current associated to all
% hyperbolic surfaces.  
% See Example~\ref{ex:Liouville} for an explicit
% description of $\calL$. 

\item Let $c$ be a geodesic in $\Sf$. 
% commA : preciser def
Then the set of  lifts of $c$ to $\H$ is a $\Gamma$-orbit in $\calG(\H)$, 
which is  discrete in $\calG(\H)$ if and only if $c$ is closed as a subset of $\Sf$.
Then the sum $\delta_c$ of the Dirac masses along this orbit is a
geodesic current on $\Sf$.
%
% For $\g\in\G$ hyperbolic, we set
% \bqn\delta_\g:=\sum_{\eta\in\G/\<\g\>}\delta_{\eta(\g_-,\g_+)}.\eqn
% Then $\delta_\g$ is a geodesic current whose carrier is the closed geodesic $c\subset \Sigma$ associated to $\g$. We will sometimes denote $\delta_\g$ by $\delta_c$.

\item Let $\D=\{z\in\C: |z|<1\}$ be the unit disk model of $\H$ and $\Gamma<\PSU(1,1)$ be a discrete subgroup. 
For $\delta\geq 0$ a \emph{$\delta$-density} on $\partial\D$ is a probability measure $\nu$ 
such that $\nu(f\circ \gamma^{-1})=\nu(j_\gamma^\delta \cdot f)$ where 
\bqn
j_\gamma(\xi)=\frac 1{|a\xi+b|^2},\quad \gamma=\begin{pmatrix}a &b\\\ov b&\ov a\end{pmatrix}.
\eqn
For instance the round measure 
\bqn
\lambda(f)=\frac 1{2\pi}\int_{0}^{2\pi}f(e^{i\theta})d\theta
\eqn
is a 1-density for $\PSU(1,1)$. Given a $\delta$-density $\nu$, the measure $\mu$ on $(\partial\D)^{(2)}$ given by 
\bqn
\mu(f)=\int\int\frac{ f(\xi,\eta)}{|\xi-\eta|^{2\delta}}d\nu(\xi)d\nu(\eta)
\eqn
is then a $\Gamma$-invariant current. 

When $\Gamma$ is finitely generated, and $\delta$ is the critical exponent of $\Gamma$, 
there is a unique $\delta$-density $\mu$ on $\partial \D$; 
its support is precisely the limit set $\Lambda$ and the corresponding measure $\mu_{\rm PS}$ is the \emph{Patterson--Sullivan current}. 
It is thus a current on the convex core $\Sigma$ of $S$ 
and every recurrent geodesic in $\mathring \Sigma$ intersects transversely a geodesic in the support of $\mu_{\rm PS}$. 
It follows from Theorem~\ref{thm_intro:positive systole} that $\Syst(\mu_{\rm PS}) > 0$ and $\mu_{\rm PS}$ satisfy the conclusion of Corollary~\ref{cor_intro:MCG}~(2).
\end{enumerate}
\end{Examples}

%%% Def intersection
Recall that if $g,h\in\calG(\H)$ are two geodesics, their intersection number $i(g,h)$ is defined as
\bqn
i(g,h):=
\begin{cases}  
0&g,h\text{ are disjoint or coincide}\\ 
1&g,h\text{ intersect transversally.}
\end{cases}
\eqn
Then the \emph{intersection} $i(\mu,\nu)$ of two currents $\mu, \nu$ on $\Sf$ is defined as follows (see \cite{Bon88-lam} or \cite{Martelli}).
Let 
\bqn
\calDG(\H):=\{(g,h)\in\calG(\H)\times\calG(\H):\,i(g,h)=1\}\,;
\eqn 
% in exactly one point. 
then  $\PSL(2,\R)$ acts properly on the open set $\calDG(\H)$ and so does $\Gamma$. 
% indeed the map that associate to a pair $(g,h)$ in $\mathcal D$ the intersection $g\cap h$ gives a $\PSL(2,\R)$-invariant projection of $\mathcal D$ to $\H$. 
% 
%restrict the product $\mu\times \nu$ to $\calDG(\H)$ and 
The intersection  $i(\mu,\nu)$ is then  the $(\mu\times\nu)$-measure
of any Borel fundamental domain for the $\Gamma$-action on $\calDG(\H)$.
We will often denote $i(\mu,\delta_c)$  by $i(\mu,c)$.

\begin{Examples}
\begin{enumerate}
\item Given two distinct closed geodesics $c,c'$ in $\Sf$, the intersection
  $i(\delta_c,\delta_{c'})$ is the minimal geometric
  % ?
  intersection
  number between two closed curves in the free homotopy classes represented  by $c$ and $c'$. Instead  $i(\delta_c,\delta_{c})$ is the number of self intersections of the geodesic $c$, so for example $i(\delta_c,\delta_c)=0$ if and only if $c$ is simple.
% We will often denote it by $i(c,c')$.
\item If $c$ is a closed geodesic in $\Sf$ and $\ell_{\rm hyp}(c)$ is its 
  hyperbolic length,  then
% For an appropriate normalization of the Liouville current  $\calL$ 
we have 
\bqn
i(\calL,\delta_c)=\ell_{\rm hyp}(c)\;.
\eqn 
\end{enumerate}
\end{Examples}

%%% Topology on the space of currents
The set $\calC(\Sf)$ of geodesic currents on $\Sf$
is a convex cone in the dual %$\mathcal C_{cc}((\bH^2)^{(2)})^*$ 
of the space of compactly supported functions on $\calG(\H)$; 
the latter is provided with the topology of inductive limit of Banach
spaces and $\calC(\Sf)$ will be equipped with the corresponding weak*
topology. 
Given a geodesic current $\mu$ on $\Sf$, its support
$\supp(\mu)\subseteq \calG(\H)$ is a closed $\Gamma$-invariant subset;
as mentioned in the introduction, we call the {\em carrier} of $\mu$ and denote by $\Carr(\mu)$
the closed subset of $S$ consisting of the projection to $\Sf$ of the union of all points on all geodesics in $\supp(\mu)$.  
For $F\subset \Sf$, we denote by $\calC_F(\Sf)$ 
the subset of geodesic currents on $\Sf$  with  carrier included in
$F$.

It is straightforward to verify that if $\mu$ is any current on $S$ and $\nu$ has compact carrier, then
$i(\mu,\nu)<+\infty$.
% We denote by $\Ccc(\Sf)$ 
% the set of geodesic currents with compact carrier.

%%% Geom finite case
%In this article we will be mostly interested in the case in which
%$\Gamma$ is finitely generated, that is $\Sf$ is geometrically
%finite. 
%In this case $\Sf$ has finitely many cusps and finitely many expanding
%ends (funnels); if $C\subset\H$ denotes the closed convex hull of the
%limit set $\Lambda_\Gamma\subset\bH$ of $\Gamma$, then
%$\Sigma:=\Gamma\backslash C\subset S$ is a finite area hyperbolic surface with
%geodesic boundary; 
%%%commA: core of $\Sf$ ??
%a geodesic current on $\Sigma$
%is a geodesic current on $\Sf$  with carrier included in $\Sigma$.

\begin{Example}
The geodesic current  $\delta_c$ on $\Sf$ 
from Example~\ref{ex: basic currents}(2) is a
geodesic current on $\Sigma$ if and only if $c$ is either a closed
geodesic or an \emph{ideal} geodesic,
that is a geodesic connecting two cusps of $\Sigma$.
This is the case in Figure~\ref{fig:1}.
\end{Example}

One of the most fundamental facts concerning the intersection is the following continuity property due to Bonahon:

\begin{Theorem}[{\cite[\S~4.2]{Bon86}}]
For every compact subset $K\subset \Sf$, the intersection
\bqn
i\colon\mathcal \calC(\Sf)\times \calC_K(\Sf)\to \R
\eqn
is continuous.
\end{Theorem}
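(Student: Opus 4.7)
The plan is to reduce the statement to the joint continuity of the bilinear form $(\mu,\nu) \mapsto \int F \, d(\mu\times\nu)$ for $F$ a suitable compactly supported continuous function on $\calG(\H)\times\calG(\H)$, and then handle that by approximation by elementary tensors.

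First I would replace the non-canonical Borel fundamental domain in the definition of $i(\mu,\nu)$ by a partition-of-unity expression. Since $\Gamma$ acts properly discontinuously on $\H$, one can pick a compactly supported continuous function $\varphi : \H \to [0,1]$ satisfying $\sum_{\gamma \in \Gamma} \varphi(\gamma^{-1} x) \equiv 1$. Let $P : \calDG(\H) \to \H$ be the continuous, $\Gamma$-equivariant map sending a transversely intersecting pair to their common point. Unfolding the defining integral $i(\mu,\nu) = (\mu\times\nu)(D)$ against the partition of unity and using $\Gamma$-invariance of $\mu\times\nu$ on $\calDG(\H)$ yields
\bqn
i(\mu,\nu) \;=\; \int_{\calDG(\H)} (\varphi \circ P) \, d(\mu\times\nu)\,.
\eqn

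Next I would observe that $\varphi \circ P$ extends by zero to a function $F \in C_c(\calG(\H)\times\calG(\H))$. Its support is contained in the set of pairs $(g,h)$ of geodesics both passing through the compactum $\supp(\varphi) \subset \H$; since the set of geodesics meeting a compact set of $\H$ is compact in $\calG(\H)$, this support is compact inside the open subset $\calDG(\H)$ of $\calG(\H)^2$, and the zero extension is continuous. In particular $F$ is a compactly supported continuous function of the two variables $g$ and $h$ separately.

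It then suffices to prove that $(\mu,\nu) \mapsto \int F \, d(\mu\times\nu)$ is jointly continuous on $\calC(\Sf) \times \calC_K(\Sf)$. I would use Stone–Weierstrass to approximate $F$ uniformly on a compact neighbourhood $U$ of its support by finite elementary tensors: given $\eps>0$, pick $F_\eps = \sum_{i=1}^N f_i\otimes h_i$ with $f_i, h_i \in C_c(\calG(\H))$ and $\|F-F_\eps\|_\infty<\eps$ on $U$. For each fixed $i$, weak-* convergence $\mu_n\to\mu$ and $\nu_n\to\nu$ gives
\bqn
\int (f_i\otimes h_i) \, d(\mu_n\times\nu_n) \;=\; \Bigl(\int f_i\, d\mu_n\Bigr)\Bigl(\int h_i\, d\nu_n\Bigr) \;\longrightarrow\; \Bigl(\int f_i\, d\mu\Bigr)\Bigl(\int h_i\, d\nu\Bigr)\,,
\eqn
so the main term converges; summing over $i$ handles $F_\eps$.

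The main obstacle — and the reason the hypothesis $\Carr(\nu)\subset K$ is essential — is controlling the error $\int (F - F_\eps) d(\mu_n\times\nu_n)$ uniformly in $n$. For this I would fix a bump function $\psi \in C_c(\calG(\H)^2)$ equal to $1$ on $U$; by weak-* convergence the integrals $\int \psi \, d(\mu_n\times\nu_n)$ are bounded, provided the sequence is tight on a neighbourhood of $\supp(F)$. This is where $\Carr(\nu_n)\subset K$ enters decisively: it forces each $\nu_n$ to be supported on the fixed compact set $\{g\in\calG(\H) : \pi(g)\cap K \neq \emptyset\}$ modulo $\Gamma$, so that the mass of $\nu_n$ in any weak-* neighbourhood behaves uniformly, while the corresponding mass of $\mu_n$ restricted to the compact slice picked out by $F$ is bounded by standard weak-* uniform boundedness. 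This yields $\int F \, d(\mu_n\times\nu_n) \to \int F\, d(\mu\times\nu)$ and hence continuity of $i$.
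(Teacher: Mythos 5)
The theorem is cited from Bonahon \cite{Bon86}; the paper does not give its own proof, but it flags immediately afterwards that the continuity ``relies on the following crucial technical point'' --- Lemma~\ref{lem:pencils} on pencils. Your argument never touches that point, and the place where it should appear is exactly where your proof breaks down.

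The gap is in Step~3. The function $F$, defined on $\calDG(\H)$ as $\varphi\circ P$ and extended by zero off $\calDG(\H)$, is \emph{not} continuous on $\calG(\H)\times\calG(\H)$. The issue is that $P^{-1}(\supp\varphi)$ is not closed in $\calG(\H)^2$: one can have sequences $(g_n,h_n)\in\calDG(\H)$ with $P(g_n,h_n)\to p\in\supp\varphi$ and $\varphi(p)>0$, while the pair $(g_n,h_n)$ converges to a pair of equal, tangent, or asymptotic geodesics --- i.e.\ a point outside $\calDG(\H)$, where $F$ has been set to $0$. (For example, perturb a fixed geodesic $g$ through $\supp\varphi$ at each of its two endpoints separately to obtain $g_n,h_n\to g$ crossing near $p\in g$.) So $F$ is merely defined and continuous on the open set $\calDG(\H)$, and its zero extension is only upper semicontinuous, with discontinuity set contained in the set of non-transverse pairs both meeting $\supp\varphi$. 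Consequently the Stone--Weierstrass approximation and the termwise weak-* convergence do not apply: $\lim_n\int F\,d(\mu_n\times\nu_n)=\int F\,d(\mu\times\nu)$ is not a formal consequence of $\mu_n\to\mu$, $\nu_n\to\nu$ for discontinuous $F$, even compactly supported.

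This is not a fixable cosmetic point but the actual content of Bonahon's theorem. The correct argument must show that the discontinuity set carries no $(\mu\times\nu)$-mass, and must propagate that control along the approximating sequence. The absence of mass is exactly what Lemma~\ref{lem:pencils} gives: currents on $S$ put no mass on pencils $\{p\}\times B$ at conical limit points $p$ not containing a hyperbolic axis, which rules out atoms responsible for mass on pairs of asymptotic or coincident geodesics. With that, one argues via Portmanteau or via a sandwiching between semicontinuous functions, not via uniform tensor approximation. A secondary (and minor) issue is your choice of $\varphi$: if $S$ is non-compact one cannot have $\sum_\gamma\varphi(\gamma^{-1}x)\equiv1$ for a compactly supported $\varphi$; one only needs the identity on $\pi^{-1}(K')$ for a compact $K'\supset K$, which is enough because all relevant intersection points project into $K$.

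Your final paragraph on tightness is correct in outline (weak-* convergence does give uniform bounds on $\mu_n$-mass of fixed compact sets), but it is doing no harm --- the error is already fatal upstream.
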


%%% Generic pencils
This continuity relies on the following crucial technical point that we will use
in this paper and that can be found for example in \cite[Proposition~8.2.8]{Martelli}.
It rules out a situation in which a geodesic current may have a ``one-sided'' atom.
Recall that a \emph{pencil} is a subset of $(\bH)^{(2)}$ of the form $\{p\}\times B$ where $B\subset\bH$
is a Borel subset not containing $p$. The lemma holds as stated for any hyperbolic surface $S = \Gamma \backslash \H$. If $S$ is geometrically finite, then a point in the limit set is conical if and only if it is not a cusp.

\begin{Lemma}\label{lem:pencils}
Let $\mu$ be a geodesic current on $\Sf$. 
Assume that $p\in\bH$ is a conical limit point and the pencil
$\{p\}\times B$ does not contain the axis of a hyperbolic element. Then
\bqn
\mu(\{p\}\times B)=0 \;.
\eqn
\end{Lemma}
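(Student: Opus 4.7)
The plan is to argue by contradiction: assuming $\mu(\{p\}\times B)>0$, I will produce infinitely many pairwise disjoint ``pencil-like'' sets of a fixed positive $\mu$-mass inside one compact subset of $(\bH)^{(2)}$, contradicting the fact that $\mu$ is locally finite.

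First, by inner regularity of the Radon measure $\mu$ one may replace $B$ by a compact subset of $\bH\smallsetminus\{p\}$ of positive pencil mass. Next I observe that since $p$ is conical and parabolic fixed points of $\Gamma$ are never conical (the $\Gamma$-orbit approaches a parabolic fixed point horocyclically, not non-tangentially), the stabilizer $\Gamma_p$ of $p$ in $\Gamma$ is either trivial or generated by a hyperbolic element $h$ with axis $\{p,q'\}\subset\bH$; in the latter case the pencil hypothesis forces $q'\notin B$, so $B$ is automatically compact in $\bH\smallsetminus\{p,q'\}$. I treat the two cases separately.

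In the case $\Gamma_p=\{e\}$, I take a conical sequence $(\gamma_n)\subset\Gamma$ and use the standard equivalent formulation of conicality (Beardon) that there exist distinct $a,b\in\bH$ with $\gamma_n^{-1}(p)\to a$ and $\gamma_n^{-1}(q)\to b$ uniformly on compact subsets of $\bH\smallsetminus\{p\}$. Since $B$ is such a compact subset, the translated pencils $\gamma_n^{-1}(\{p\}\times B)=\{\gamma_n^{-1}(p)\}\times\gamma_n^{-1}(B)$ eventually lie in a compact box $\ov U_a\times\ov U_b\subset (\bH)^{(2)}$. Triviality of $\Gamma_p$ forces the base points $\gamma_n^{-1}(p)$ to be pairwise distinct, so these translates are pairwise disjoint, each of $\mu$-mass $\mu(\{p\}\times B)$ by $\Gamma$-invariance, contradicting $\mu(\ov U_a\times\ov U_b)<\infty$.

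In the case $\Gamma_p=\langle h\rangle$, rather than invoking a conical sequence (which may all lie in $\langle h\rangle$ and hence keep the pencil base fixed at $p$), I use $h$-invariance directly. Fix a compact fundamental domain $F$ for $\langle h\rangle$ on $\bH\smallsetminus\{p,q'\}$; compactness of $B$ yields a finite decomposition $B=\bigsqcup_{k\in J}B_k$ with $B_k\subset h^kF$, and the translates $h^{-n}B_k\subset h^{k-n}F$ are pairwise disjoint as $n$ varies and, as $n\to+\infty$, all accumulate at $q'$. Hence the pencils $\{p\}\times h^{-n}B_k$, which by $h$-invariance of $\mu$ all share the common mass $\mu(\{p\}\times B_k)$, eventually lie in a compact set $\{p\}\times\ov U_{q'}\subset(\bH)^{(2)}$ bounded away from the diagonal, and the same finite-versus-infinite-mass argument forces $\mu(\{p\}\times B_k)=0$ for each $k$, whence $\mu(\{p\}\times B)=0$. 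The main conceptual obstacle is precisely this need to pivot from ``distinct base points'' in the trivial-stabilizer case to ``distinct heads'' in the hyperbolic-stabilizer case, where the assumption $q'\notin B$ becomes indispensable to keep the heads of the translated pencils bounded away from $p$.
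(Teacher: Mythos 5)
Your proof is correct. For context: the paper does not actually prove this lemma, it cites Martelli's \emph{An Introduction to Geometric Topology} (Proposition~8.2.8), so there is no in-text argument to compare against; the argument you give is essentially the standard one, adapted to handle both the trivial and the hyperbolic-cyclic stabilizer of $p$, which is precisely the generality the paper's Remark claims.

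Two very small points you may want to tighten, neither of which is a gap. In the trivial-stabilizer case, you should note that a conical sequence consists of pairwise distinct group elements (it diverges in $\Gamma$), which is what makes ``$\Gamma_p=\{e\}$ implies the base points $\gamma_n^{-1}(p)$ are distinct'' go through. In the cyclic case, the claim that $h^{-n}B_k$ accumulates at $q'$ as $n\to+\infty$ implicitly assumes $p$ is the \emph{attracting} fixed point of $h$; since you have no control over this, replace $h$ by $h^{-1}$ if necessary (or let $n\to-\infty$). Also, a strict fundamental domain for $\langle h\rangle$ on $\bH\setminus\{p,q'\}$ with genuinely disjoint translates is a pair of half-open arcs rather than a compact set, but only its boundedness away from $\{p,q'\}$ is used, so this is harmless. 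Your closing diagnosis is the right one: the hypothesis that no hyperbolic axis lies in the pencil is exactly what forces $q'\notin B$ and so keeps the heads of the translated pencils in a compact subset of $(\bH)^{(2)}$ away from $\{p\}$; dropping it, the lemma fails already for $\mu=\delta_c$ with $c$ the closed geodesic of $h$.
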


%\begin{Remark}
%This lemma holds as stated for any hyperbolic surface $S = \Gamma \backslash \H$.
%If $S$ is geometrically finite, then a point in the limit set is conical if and only if it is not a cusp.
%\end{Remark}
% This continuity property implies a certain number of compactness
% criteria which will be useful in \S~\ref{sec:pos}.
For our purposes we will need the following compactness property,
which can be derived from the arguments in \cite[Proposition 4]{Bon88-curr}.

\begin{Proposition}\label{prop:proper}
\be
\item Let $K\subset S$ be compact and let $\calL$ be the Liouville current on $S$. Then the set 
\bqn
\{\mu\in\calC_K(\Sf):\ i(\calL,\mu)=1\}
\eqn
is compact and hence $\P\calC_K(S)$ is compact.
\item The space $\P\calC(\Sigma)$ is compact.
\ee
\end{Proposition}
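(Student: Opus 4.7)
The plan is to prove Part (1) by a Banach--Alaoglu-type argument controlling currents by their value on a compact piece of $\calG(\H)$, and to deduce Part (2) by an exhaustion argument combined with a separate treatment of geodesics running to cusps.

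For Part (1), I would first lift $K$ to a compact $\wt K\subset\H$ with $\pi(\wt K)=K$ and set $\calG_{\wt K}=\{g\in\calG(\H):g\cap\wt K\neq\varnothing\}$, which is a compact subset of $\calG(\H)$. Every $\mu\in\calC_K(\Sf)$ is supported on $\Gamma\cdot\calG_{\wt K}$ and, being $\Gamma$-invariant, is determined by its restriction to $\calG_{\wt K}$ up to identifying $\Gamma$-translates. Since $\Gamma$ acts properly on $\calG(\H)$ off the diagonal in $(\bH)^{(2)}$, only finitely many translates $\gamma\cdot\calG_{\wt K}$ meet any given compact subset of $\calG(\H)$; thus weak-$*$ precompactness of a family of such currents reduces to a uniform bound on the total mass $\mu(\calG_{\wt K})$.

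The key estimate is the existence of a constant $c(K)>0$ such that
\[\mu(\calG_{\wt K})\leq c(K)\cdot i(\calL,\mu)\qquad\text{for all }\mu\in\calC_K(\Sf).\]
To establish this, pick a Borel fundamental domain $F_\calG\subset\calDG(\H)$ for the diagonal $\Gamma$-action. By compactness of $\calG_{\wt K}$ in $\calG(\H)$ and the fact that, for any $g\in\calG_{\wt K}$, there is a neighborhood of transverse geodesics in $\calG(\H)$ of positive $\calL$-measure (indeed $\calL$ is a fully supported $\PSL(2,\R)$-invariant measure), a compactness argument gives a uniform lower bound $\calL(\{h: (g,h)\in F_\calG\})\geq c(K)^{-1}$ for $g\in\calG_{\wt K}$, after choosing $F_\calG$ so that its first-coordinate projection contains $\calG_{\wt K}$. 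Integrating against $\mu$ yields the bound. Consequently the set $B_K:=\{\mu\in\calC_K(\Sf):i(\calL,\mu)=1\}$ has uniformly bounded mass on every compact subset of $\calG(\H)$, and by Banach--Alaoglu for Radon measures it is weak-$*$ precompact. Closedness follows from the continuity of $\mu\mapsto i(\calL,\mu)$ on $\calC_K(\Sf)$ (Bonahon's continuity theorem, since currents have carrier in $K$) and from the fact that $\Gamma$-invariance, positivity, and support condition are preserved under weak-$*$ limits. Since any nonzero $\mu\in\calC_K(\Sf)$ has $i(\calL,\mu)>0$ (the support of $\mu$ consists of geodesics transverse to an $\calL$-positive set of other geodesics), the continuous surjection $B_K\twoheadrightarrow\P\calC_K(\Sf)$ shows that $\P\calC_K(\Sf)$ is compact.

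For Part (2), I would take an exhaustion $K_1\subset K_2\subset\cdots$ of $\Sigma$ by compact sets with $\bigcup_n K_n=\Sigma$ and apply Part (1) along a diagonal subsequence. Given a sequence $([\mu_n])_n$ in $\P\calC(\Sigma)$, one considers two cases. If, after passing to a subsequence, there exists some $n_0$ such that $\mu_n(\calG_{\wt K_{n_0}})\neq 0$ for all $n$, one normalizes $\mu_n$ so that $i(\calL,\mu_n|_{K_{n_0}})=1$ and applies Part (1) to extract a convergent subsequence on $\calG_{\wt K_{n_0}}$; then one iterates the extraction on $K_{n_0+1}, K_{n_0+2},\ldots$ and takes a diagonal to obtain a limit current. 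If instead $\mu_n$ is eventually supported on geodesics entirely contained in the cuspidal ends of $\Sigma$, i.e.\ on ideal geodesics running into cusps, one uses that the set of $\Gamma$-orbits of such geodesics is countable (finitely many cusps, each with cyclic stabilizer), and extracts a convergent subsequence orbit by orbit via a diagonal argument on the discrete weights.

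The main obstacle is the key estimate in Part (1): one needs to be careful about how to pick $F_\calG$ and to ensure that the lower bound on $\calL(\{h:(g,h)\in F_\calG\})$ is genuinely uniform over $g\in\calG_{\wt K}$, rather than degenerating near geodesics whose endpoints approach cusps of $S$. This is the step where Lemma~\ref{lem:pencils}, ruling out one-sided atoms of $\mu$ at conical limit points, is implicitly needed to justify using $\calL$ (whose support contains every cusp and every conical point) as a normalizing current without losing mass in the limit.
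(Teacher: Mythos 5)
Your Part (1) follows essentially the same route as the paper, which cites Bonahon's argument \cite[Proposition~4]{Bon88-curr} plus the observation that $\supp\calL=\calG(\H)$ forces $i(\calL,\mu)>0$ for nonzero $\mu$. Your mass estimate via a fundamental domain, Banach--Alaoglu, closedness of the defining conditions, and the role of the no-one-sided-atom Lemma~\ref{lem:pencils} in the continuity of $i(\calL,\cdot)$ are all in the spirit of that argument.

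Part (2) is where you diverge from the paper, and there is a genuine gap. The normalization $i(\calL,\mu_n|_{K_{n_0}})=1$ gives uniform mass control of the representatives $\mu_n$ only on $\calG_{\wt K_{n_0}}$, not on the larger sets $\calG_{\wt K_{m}}$ for $m>n_0$. Nothing stops $\mu_n(\calG_{\wt K_{n_0+1}})$ from blowing up while $\mu_n(\calG_{\wt K_{n_0}})$ stays bounded: the weak-$*$ limit you extract on $K_{n_0}$ then has no Radon extension to all of $\calG(\H)$, and iterating the extraction on larger $K_m$ requires bounds you do not have. Renormalizing anew for each $K_m$ is not permitted either, since the normalizations are mutually incompatible and you cannot diagonalize to a single limiting current in a fixed projective class. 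Your second case (sequences ``eventually supported on geodesics in the cuspidal ends'') is also not a complement of the first: geodesics in $\Sigma$ with one endpoint conical and one at a cusp escape every compact set yet are not ideal arcs between cusps, and the dichotomy as stated is not exhaustive.

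The paper sidesteps all of this by constructing a \emph{single} positive, $1$-homogeneous, continuous functional on $\calC(\Sigma)$ whose level set is compact: in Lemma~\ref{lem:2.8} one takes $\mu\mapsto\mu(\psi)+i(\mu,\nu)$, where $\nu$ has compact carrier and is binding (every geodesic projecting into $\mathring\Sigma$ meets $\supp\nu$ transversally), and $\psi$ is a compactly supported continuous function equal to $1$ on a lift of each boundary geodesic of $\Sigma$. The binding current $\nu$ gives a uniform lower bound on $i(\mu,\nu)$ in terms of the mass of $\mu$ on \emph{any} compact $\calG_{\wt K}\cap\calG(\mathring\Sigma)$, while $\psi$ controls the remaining Dirac masses along $\partial\Sigma$. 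This is exactly what your exhaustion lacks. If you wanted to repair your argument you would have to produce such a global normalizing functional anyway, so the cleaner path is to follow the paper and verify that the level set $\{\mu:\,\mu(\psi)+i(\mu,\nu)=1\}$ is compact, then that the projection to $\P\calC(\Sigma)$ is a continuous bijection.
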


\begin{proof}  The proof of (1) follows the one in \cite[Proposition~4]{Bon88-curr}
observing that, since $\supp(\calL)=\calG(\H)$,
any non-zero current $\mu$ on $S$ has positive intersection with $\calL$.
For the second assertion replace \cite[Proposition~4]{Bon88-curr} by the following lemma.
\end{proof}

\begin{Lemma}\label{lem:2.8}  Let $\nu$ be a current on $\Sigma$ with compact carrier such that 
every geodesic in $\H$ projecting into $\mathring{\Sigma}$ intersects transversally a geodesic in the support of $\nu$.
Let $\psi:\calG(\H)\to[0,\infty)$ be continuous with compact support 
such that for every boundary component $g$ of $\Sigma$, $\psi(\tilde g)=1$
for some lift $\tilde g$ of $g$.  Then the set
\bqn
\{\mu\in\calC(\Sigma):\,\mu(\psi)+i(\mu,\nu)\leq1\}
\eqn
is compact.
\end{Lemma}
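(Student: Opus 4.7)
The set $\mathcal{F}:=\{\mu\in\calC(\Sigma):\,\mu(\psi)+i(\mu,\nu)\le 1\}$ is closed in $\calC(\Sigma)$: since $\psi$ is continuous with compact support, $\mu\mapsto\mu(\psi)$ is continuous for the weak* topology, and since $\nu$ has compact carrier, $\mu\mapsto i(\mu,\nu)$ is continuous by Bonahon's theorem recalled above. For compactness it then suffices to establish relative compactness, which by Banach--Alaoglu reduces to a uniform bound $\sup_{\mu\in\mathcal F}\mu(K)<+\infty$ for every compact $K\subset\calG(\H)$. By compactness of $K$, this in turn reduces to exhibiting, for each $g_0\in K$, an open neighborhood $U_{g_0}$ of $g_0$ in $\calG(\H)$ and a constant $C_{g_0}$ such that $\mu(U_{g_0})\le C_{g_0}$ for all $\mu\in\mathcal F$.

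I would split into three cases according to where $g_0$ sits. First, if $g_0\notin\wt{\calG}(\Sigma)$, then since $\wt{\calG}(\Sigma)$ is closed in $\calG(\H)$ and contains $\supp(\mu)$, the point $g_0$ admits a neighborhood $U$ with $\mu(U)=0$. Second, if $g_0\in\wt{\calG}(\Sigma)$ projects into $\mathring{\Sigma}$, then by hypothesis on $\nu$ there exists $h_0\in\supp(\nu)$ with $(g_0,h_0)\in\calDG(\H)$. Using that $\Gamma$ acts properly on $\calDG(\H)$ and that the stabilizer of $(g_0,h_0)$ is trivial (an isometry fixing two distinct geodesics fixes four points of $\bH$ and is the identity), I can pick open neighborhoods $U$ of $g_0$ and $V$ of $h_0$ with $U\times V\subset\calDG(\H)$ and such that $U\times V$ injects into the quotient $\Gamma\backslash\calDG(\H)$. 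Then
\[
\mu(U)\cdot\nu(V)\;=\;(\mu\otimes\nu)(U\times V)\;\le\;i(\mu,\nu)\;\le\;1,
\]
and $\nu(V)>0$ since $h_0\in\supp(\nu)$, yielding $\mu(U)\le 1/\nu(V)$. Third, if $g_0$ is a lift of a boundary component $g$ of $\Sigma$, then by assumption some lift $\tilde g$ of $g$ satisfies $\psi(\tilde g)=1$, so $g_0=\gamma_0\tilde g$ for some $\gamma_0\in\Gamma$; by continuity a neighborhood $\tilde U$ of $\tilde g$ satisfies $\psi\ge 1/2$ on $\tilde U$, whence $\mu(\tilde U)\le 2\mu(\psi)\le 2$, and $\Gamma$-invariance of $\mu$ gives $\mu(\gamma_0\tilde U)\le 2$.

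These three cases exhaust $\calG(\H)$, since $\wt{\calG}(\Sigma)$ decomposes as the disjoint union of the (finitely many $\Gamma$-orbits of) lifts of boundary components and the geodesics whose projection lies in $\mathring{\Sigma}$: any geodesic crossing a boundary component transversally would immediately leave $\Sigma$, while one tangent to it must coincide with it. A finite subcover of $K$ then gives the desired uniform bound. The principal obstacle is the second case, where the neighborhoods $U,V$ must be chosen carefully so that the product $U\times V$ actually injects into $\Gamma\backslash\calDG(\H)$, ensuring $(\mu\otimes\nu)(U\times V)\le i(\mu,\nu)$ and not merely a bounded multiple thereof; this hinges on properness of the $\Gamma$-action on $\calDG(\H)$ and triviality of point stabilizers. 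The auxiliary point—that $\wt{\calG}(\Sigma)$ is closed in $\calG(\H)$ and contains $\supp(\mu)$ for every $\mu\in\calC(\Sigma)$—is straightforward from the description of $\wt{\calG}(\Sigma)$ as the geodesics with both endpoints in the (closed) limit set of $\Gamma$.
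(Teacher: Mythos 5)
Your proof is correct and is exactly the argument the authors have in mind when they write that Lemma~2.8 ``is a straightforward modification of the proof of [Bon88-curr, Proposition~4]'': Bonahon's argument is precisely the product-measure bound you give in your second case, and the ``modification'' consists in treating separately the geodesics outside $\wt{\calG}(\Sigma)$ and the lifts of boundary components (your first and third cases), which you handle correctly using the hypothesis on $\psi$. The one point worth stating a little more explicitly is that the three cases are exhaustive because, as you note, a geodesic in $\wt{\calG}(\Sigma)$ that is not a lift of a boundary component cannot meet any such lift transversally (it would leave $C$) nor tangentially at an interior point (it would coincide with it); and the bound $(\mu\otimes\nu)(U\times V)\le i(\mu,\nu)$ indeed requires, as you emphasize, the free and proper action of $\Gamma$ on $\calDG(\H)$ so that $U\times V$ sits inside a Borel fundamental domain.
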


The proof of Lemma~\ref{lem:2.8} is a straightforward modification of the proof of \cite[Proposition~4]{Bon88-curr}.
The lemma implies Proposition~\ref{prop:proper}(2) by observing that the set 
\bqn
P:=\{\mu\in\calC(\Sigma):\,\mu(\psi)+i(\mu,\nu)=1\}
\eqn
is compact and the projection map $P\to\P\calC(\Sigma)$ is a continuous bijection.

%Observe that since $\supp(\calL)=\calG(\H)$, any  non-zero current $\mu$ on $\Sf$ has positive intersection with $\calL$.
%In fact, if $S$ is compact, this is nothing but Bonahon's proof that $\P\calC(S)$ is compact.

We refer for instance to \cite[\S 8.3.4]{Martelli} for the notion of \emph{measured geodesic lamination} on a general hyperbolic surface $S=\Gamma\backslash \H$, 
and the bijective correspondence between geodesics currents $\mu$ on $S$ with $i(\mu,\mu)=0$, 
equivalently such that no two geodesics in the support of $\mu$ intersect transversally, and measured geodesic laminations on $S$.

\section{Decompositions}\label{sec:3}
Let $S$ be a hyperbolic surface (not necessarily complete) and $F\subset S$ a subset.  
We denote by $\calG(F)$ the set of unoriented, unparametrized geodesics of $S$ that are contained in $F$. 
Given a geometrically finite surface $S$ with convex core $\Sigma$ and an arbitrary subset $A\subset\calG(\Sigma)$ of geodesics, 
our aim is to show how one can associate two laminations $\mathcal E_A$ and $\Lamref_A$ respectively, 
such that their complementary regions are either completely avoided by $A$ or filled by $A$ in two specific ways
(see Proposition~\ref{prop:3.1} and \ref{prop:decA}).
Applying these propositions to the support of a geodesic current on $\Sigma$ will imply Theorem~\ref{thm:1}. 
In \S~\ref{s:3.1} we start by studying the case of a subset of $\calG(\H)$, then move to geometrically finite surfaces in \S~\ref{s:3.2}, 
where we establish the main propositions.  We show in \S~\ref{s:3.3} how to deduce Theorem~\ref{thm:1}.

\subsection{The lamination associated to a subset of geodesics in $\H$}\label{s:3.1}
Given a subset $A\subset \calG(\H)$ its \emph{intersection graph}, $\Graph(A)$, 
is the graph whose vertex set is $A$ and two vertices $a, a'$ are adjacent if $i(a,a')=1$. 
We say that $A$ is {\em $i$-connected} if $\Graph(A)$ is connected; 
an {\em $i$-connected component} ($i$-cc) $A'\subset A$ is then the set of vertices of a connected component of $\Graph(A)$. 
We proceed to define the lamination associated to $A$, this relies on classical properties of convex hulls in $\H$.

For $A\subset \calG(\H)$ let $\Hull(A)\subset \H$ be the convex hull of the union of the geodesics in $A$, 
namely the intersection of all convex subsets containing $A$. 
Whenever $|A|>1$, then $\Omega:=\Hull(A)$ is in general neither open nor closed 
but its closure $\ov\Omega$ is the closed convex hull of the set $T
\subset \partial \H$ of extremities of geodesics in $A$. Each connected component of $\partial \H\setminus\ov T$ is an interval to which we can associate the geodesic connecting its endpoints;  the boundary of $\overline\Omega$ in $\H$ is the disjoint union of the set $\Delta(\Omega)$ of all such geodesics.
% the set $\Delta(\Omega)$ \marginpar{\tiny (2) Correct the definition, add a picture}
%of geodesics connecting the endpoints of the intervals 
%that are connected components of $\partial \H \backslash \overline T$. 
\begin{center}
	\begin{figure}[h]
		\begin{tikzpicture}
			\draw (0,0) circle [radius=2];

		%	\draw (-1.414,1.414) -- (1.414,-1.414);
		%	\draw (-2,0) arc (90: 37:4);
		%	\draw (0,2) arc (180: 233:4);
		%	\draw (-0.8,1.83) arc (201.5:231.6:7.5);
		%	\draw (-1.88,.7) arc (70:40:7.5);
			% geodesic arc I
		%	\draw (.5,0.05) arc (125:147.5:2);
		%	\draw (.5,0.1) node[right] {$I$};
			\begin{scope}[rotate=40]
				\draw (-1.88,.7) arc (70:40:7.5);
			\end{scope}
				\draw (-0.8,1.83) node[above] {$q$}  arc (201.5:231.6:7.5);
				\draw (-0.8,1.83) arc (206.5:235.6:7.5);
				\draw (-0.8,1.83) arc (211.5:240:7.5);	
				\draw (-0.8,1.83) arc (216.5:244:7.5);	
				\draw (-0.8,1.83) arc (221.5:247.6:7.5);
				\draw [red, thick] (-0.8,1.83) arc (226.5:251.6:7.5);	
				\draw [red, thick] (-0.8,1.83) arc (-14:-33:8.2);
		
				\draw [red, thick] (-1.85,-.65) arc (90:66:8.2);
		
			\node at (2.2,.4) {$y$};
		\node at (1.8,-1.4) {$x$};
		\node at (2.2,-0.3) {$r$};
		\node at (-2.2,-.7) {$p$};
		\end{tikzpicture}
		\caption{The set $A$ consists of the pencil $\{q\}\times (x,y)$ together with the geodesic $\{p,r\}$. In this case $T$ consists of the union of the open interval $(x,y)$ and the two points $\{p,q\}$. Its boundary has three connected components, and  $\Delta(\Omega)$ consists of the three geodesics in red.}
	\end{figure}
\end{center}

Let 
\begin{equation}\label{eqn:1}\Lambda_A:=\overline{\bigcup_{A' i\text{cc} \text{ of } A}\Delta(\Hull(A'))}\end{equation}
be the closure in the space $\calG(\H)$ of the set of the boundary geodesics of convex hulls of $i$-connected components of $A$. 
Notice that the boundary of the closure of $\Hull(A')$ is a lamination for every $i$-cc $A'$ and the gist of next proposition
is to show that $\Lambda_A$ is a lamination as well.

Given any subsets $A,B\subset \calG(\H)$ we set 
\bqn
i(A,B)=\sum_{g\in A, h\in B}i(g,h)\in[0,\infty].
\eqn
Observe that $i(A,B) > 0$ if and only if some $a \in A$ intersects transversally some $b \in B$.
Then
$$A^0=\{g\in\calG(\H): i(g,A)=0\}$$
is a closed subset of $\calG(\H)$ and  if $A\subset B$, then $B^0\subset A^0$.
 Setting $A^{00}:=(A^0)^0$, then $A\subset A^{00}$, and $A^0\cap A^{00}$ is a lamination, 
since $i(A^0\cap A^{00},A^0\cap A^{00})\leq i(A^0,A^{00})=0$.

\begin{Lemma}\label{lem:crucial} Let $h$ be a geodesic and $A'$ an $i$-cc of $A$.
If $i(h,\Delta(\Hull(A'))>0$, then $i(h,A')>0$.  As a consequence
\bqn
i(\Delta(\Hull(A')),\Delta(\Hull(A'')))=0
\eqn
for every two $i$-cc $A',A''$ of $A$.
\end{Lemma}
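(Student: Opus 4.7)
The plan is to argue by contradiction via a partition of $A'$ using the two sides of $h$. Pick $\delta\in\Delta(\Hull(A'))$ with $i(h,\delta)=1$, and let $T'\subset\bH$ denote the set of endpoints of geodesics in $A'$. By construction the geodesic $\delta$ has endpoints $p,q$ that bound a connected component $I$ of $\bH\smallsetminus\overline{T'}$; in particular $T'\cap I=\varnothing$. Transversality places one endpoint of $h$, say $h^+$, in $I$ and the other, $h^-$, in the complementary open arc $J$, so $h^+\notin T'$. Let $\alpha_p,\alpha_q$ be the two open arcs of $\bH\smallsetminus\{h^+,h^-\}$ containing $p$ and $q$ respectively.

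Suppose for contradiction that $i(h,a)=0$ for every $a\in A'$. Since $h^+\notin T'$, no $a\in A'$ coincides with $h$ nor shares the endpoint $h^+$ with $h$. The endpoints of each such $a$ must therefore lie entirely within one of the sub-arcs $\alpha_p\cup\{h^-\}$ or $\alpha_q\cup\{h^-\}$, otherwise they would interleave with $\{h^+,h^-\}$ and force $i(h,a)=1$. Define accordingly $A'_p$ and $A'_q$; then $A'=A'_p\cup A'_q$, and $A'_p\cap A'_q$ consists of geodesics with both endpoints equal to $h^-$, hence is empty. A short case check shows that every pair $a\in A'_p$, $a'\in A'_q$ satisfies $i(a,a')=0$: either they share the common endpoint $h^-$ and are asymptotic, or their endpoints split into the disjoint open arcs $\alpha_p,\alpha_q$ (possibly with one of them at $h^-$), and a direct inspection of the cyclic order on $\bH$ shows they cannot interleave. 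The $i$-connectedness of $A'$ thus forces one of $A'_p,A'_q$ to be empty. Without loss of generality $A'_q=\varnothing$, so $T'\subset\alpha_p\cup\{h^-\}\subset\overline{\alpha_p}$, whence $q\in\overline{T'}\subset\overline{\alpha_p}$; but $q\in\alpha_q$ and $q\neq h^{\pm}$, contradicting $\overline{\alpha_p}\cap\alpha_q=\varnothing$.

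For the consequence, the case $A'=A''$ is immediate since $\Delta(\Hull(A'))$ consists of disjoint boundary geodesics of the convex set $\overline{\Hull(A')}$. If $A'\neq A''$ are distinct $i$-connected components and $i(\delta',\delta'')=1$ for some $\delta'\in\Delta(\Hull(A'))$ and $\delta''\in\Delta(\Hull(A''))$, applying the first assertion with $h=\delta''$ and the component $A'$ yields $a\in A'$ with $i(a,\delta'')=1$; a second application with $h=a$ and the component $A''$ yields $b\in A''$ with $i(a,b)=1$. This contradicts the fact that no edge of $\Graph(A)$ joins $A'$ and $A''$. The main obstacle is the careful treatment of geodesics in $A'$ that happen to be asymptotic to $h$ at $h^-$: they place $h^-$ in the closures of both sub-arcs, but $A'_p\cap A'_q=\varnothing$ is preserved because no single geodesic can have two endpoints at the same point of $\bH$.
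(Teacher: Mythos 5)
Your proof is correct and follows essentially the same strategy as the paper: prove the first claim in contrapositive form by using $i$-connectedness to force all of $A'$ onto one side of $h$, then derive the second claim by a double application of the first. The paper phrases the side argument in terms of closed half-planes and the convexity of $\Hull(A')$, whereas you work directly with boundary arcs and land the contradiction on the endpoint $q$, but the underlying idea is identical.
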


\begin{proof} If $i(h,a') = 0$ for every $a' \in A'$, every $a'$ is contained in one of the two closed half planes defined by $h$, 
and since $A'$ is $i$-connected, the same holds for ${\rm Hull}(A')$, implying $i(h,g) = 0$ for every $g\in\Delta(\Hull(A'))$.

Let $A',A''$ be distinct $i$-cc, $g' \in \Delta ({\rm Hull}(A'))$, $g'' \in \Delta ({\rm Hull}(A''))$ and assume $i(g', g'') = 1$. 
By the claim there is $a'' \in A''$ with $i(g',a'') = 1$ and hence $a' \in A'$ with $i(a',a'') = 1$, a contradiction. 
Thus $i(g',g'') = 0$.
%{\color{red} and $i(g',a'') = 0$ for every $a'' \in A''$}.\marginpar{AP: this seems useless ?}
\end{proof}

\begin{Proposition}\label{prop:3.1} 
Let $A\subset \calG(\H)$ be a set of geodesics and $\Lambda_A$ as defined in (\ref{eqn:1}). Then
\begin{enumerate}
\item $\Lambda_A$ is a lamination and $\Lambda_A=A^{0}\cap A^{00}$.  In particular $i(\Lambda_A,A)=0$;
\item for any complementary region $\mathcal R$  of $\Lambda_A$ one of the following holds
\begin{enumerate}
\item  either no geodesic of $A$ meets $\mathcal R$, or
\item $A_\mathcal R:=\{a\in A:\, a \text{ is contained in } \mathcal R\}$ is an $i$-connected component of $A$, and $\mathcal R=\Hull(A_\mathcal R)$. In particular every geodesic $g$ meeting $\mathcal R$ must intersect transversally some geodesic in $A_\mathcal R$.
\end{enumerate}
\end{enumerate}
 \end{Proposition}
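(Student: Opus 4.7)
First I would show that $\Lambda_A$ is a geodesic lamination. For each $i$-cc $A'$, the elements of $\Delta(\Hull(A'))$ are pairwise disjoint as they bound a single closed convex set, and by Lemma~\ref{lem:crucial} distinct $\Delta(\Hull(A'))$ and $\Delta(\Hull(A''))$ do not transversely intersect. So the union $\bigcup_{A'}\Delta(\Hull(A'))$ is a family of pairwise non-crossing complete geodesics, whose closure $\Lambda_A$ in $\calG(\H)$ is therefore a geodesic lamination. The inclusion $\Lambda_A\subset A^0\cap A^{00}$ then follows from two applications of the contrapositive of Lemma~\ref{lem:crucial} (which says $i(h,A')=0$ implies $i(h,\Delta(\Hull(A')))=0$): applied to $h\in A$ (distinguishing $h\in A'$, for which $h\subset\overline{\Hull(A')}$ cannot cross the boundary geodesic, from $h\notin A'$, for which $i(h,A')=0$ directly) it gives $\Lambda_A\subset A^0$; applied to $h\in A^0$ it gives $\Lambda_A\subset A^{00}$. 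Passing to the closure preserves both statements because $i(\cdot,\cdot)$ takes values in $\{0,1\}$ on pairs of geodesics.

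Next I would establish the technical heart of the argument: \emph{if $A'$ is an $i$-cc and $c$ is a complete geodesic with $c\subset\overline{\Hull(A')}$ and $i(c,A')=0$, then $c\in\Delta(\Hull(A'))$.} Indeed, the endpoints $r,s$ of $c$ lie in $\overline{T}=\overline{\Hull(A')}\cap\partial\H$, and no element of $A'$ can straddle the two open arcs $I_1,I_2$ of $\partial\H\setminus\{r,s\}$ without crossing $c$. Hence $A'$ splits as $A'_1\cup A'_2$ with $A'_j\subset\overline{I_j}$, and since $A'$ is $i$-connected one of the pieces is empty, say $A'_2=\emptyset$. Then $\overline T\subset\overline{I_1}$, so the open arc $I_2$ is disjoint from $\overline T$ and is therefore one of the connected components of $\partial\H\setminus\overline T$; the geodesic joining its endpoints, namely $c$, lies in $\Delta(\Hull(A'))$.

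To prove (2), let $\mathcal R$ be a complementary region and $a\in A$ meeting $\mathcal R$. Since $i(\Lambda_A,A)=0$, $a$ is disjoint from $\Lambda_A$ and hence contained in $\mathcal R$; propagating along edges of the intersection graph shows that the full $i$-cc $A'$ of $a$ lies in $\mathcal R$, so $A'\subset A_\mathcal R$. If some $b\in A_\mathcal R$ lay in a different $i$-cc $A''$, the same propagation gives $A''\subset\mathcal R$, and using that $\mathcal R$ is convex with boundary leaves contained in $\Lambda_A$ we get $\mathcal R\subset\overline{\Hull(A')}$; the key lemma then forces $b\in\Delta(\Hull(A'))\subset\Lambda_A$, contradicting $b\in\mathcal R$. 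An analogous application of the key lemma rules out any leaf of $\Lambda_A$ from the interior of $\Hull(A')$, and combined with $\mathcal R\subset\overline{\Hull(A')}$ yields $\mathcal R=\Hull(A_\mathcal R)$. Finally, to complete the identity $\Lambda_A=A^0\cap A^{00}$ I would take $g\in A^0\cap A^{00}\setminus\Lambda_A$; since $g$ does not cross $\Lambda_A\subset A^0$, it lies in some complementary region $\mathcal R$, and (2) leaves two cases: either $A_\mathcal R=\emptyset$, in which case every geodesic $h\subset\mathcal R$ belongs to $A^0$ and choosing $h$ crossing $g$ contradicts $g\in A^{00}$; or $\mathcal R=\Hull(A_\mathcal R)$, and the key lemma applied to $g$ forces $g\in\Delta(\Hull(A_\mathcal R))\subset\Lambda_A$, a contradiction. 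The main obstacle is the key lemma itself: one must leverage the $i$-connectedness of $A'$ to exclude the configuration in which the two arcs of $\partial\H\setminus\{r,s\}$ both contain endpoints of $A'$, and the full strength of this statement is what drives both (2) and the less obvious inclusion $A^0\cap A^{00}\subset\Lambda_A$.
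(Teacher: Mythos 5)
Your proof is correct and follows the same overall skeleton as the paper's: establish the lamination property and the inclusion $\Lambda_A\subset A^0\cap A^{00}$ from Lemma~\ref{lem:crucial}, prove the dichotomy (2), and feed it back to get $A^0\cap A^{00}\subset\Lambda_A$. The genuinely different ingredient is your ``key lemma'' --- that a complete geodesic $c\subset\overline{\Hull(A')}$ with $i(c,A')=0$ must lie in $\Delta(\Hull(A'))$ --- which does not appear in the paper. The paper instead gets (2) more tersely: from $\Hull(A')\subset\calR$ (convexity of $\calR$) and $\Delta(\Hull(A'))\subset\Lambda_A$ it concludes $\Hull(A')=\calR$, and then invokes the resulting ``in particular'' clause of (2b) together with Lemma~\ref{lem:crucial} to rule out any $a\in A$ meeting the region containing a putative $g\in(A^0\cap A^{00})\smallsetminus\Lambda_A$; you instead split into the two cases of (2) and apply your key lemma directly in the case $A_\calR\neq\varnothing$. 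Your route buys extra rigor at two places the paper glosses over: that $A_\calR$ really is a single $i$-connected component (not just contains the $i$-cc of one $a$), and a full justification of the ``in particular'' assertion in (2b), both of which are immediate consequences of the key lemma. The cost is a somewhat longer argument, including the careful handling of the degenerate case $c\in A'$ (which forces $A'=\{c\}$ since $A'$ is $i$-connected and $i(c,A')=0$) that your splitting argument implicitly requires.

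One small presentational point: you write that closure preserves membership in $A^0$ and $A^{00}$ ``because $i(\cdot,\cdot)$ takes values in $\{0,1\}$''; the actual reason is that transversal intersection is an \emph{open} condition on pairs of geodesics, so that $A^0$ (and hence $A^{00}$) is closed in $\calG(\H)$. The conclusion is the same, but this is the clean justification.
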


\begin{proof}
It is immediate from Lemma~\ref{lem:crucial} that $\Lambda_A$ is a lamination and $i(\Lambda_A,A) = 0$.
We will prove that $\Lambda_A=A^{0}\cap A^{00}$ after having proven (2).

\medskip
If $\calR$ is a complementary region and $a \in A$ intersects $\calR$ then $a \subset \calR$ since $i(\Lambda_A, A) = 0$. 
Thus, the $i$-cc-component $A'$ of $a$ is formed of geodesics all in $\calR$ and hence ${\rm Hull}(A') \subset \calR$. 
Since $\Delta({\rm Hull}(A')) \subset \Lambda_A$, we conclude that ${\rm Hull}(A') = \calR$, thus proving (2).

We now complete the proof of (1).  Since  $i(\Lambda_A,A)=0$, it follows that $\Lambda_A \subset A^0$. 
If $i(\Lambda_A,A^0) > 0$, then there is an $i$-cc $A'$ with $i(\Delta({\rm Hull}(A')), A^0) > 0$  
which by Lemma~\ref{lem:crucial} would imply that $i(A',A^0)>0$,  a contradiction.
Thus $\Lambda_A \subset A^0 \cap A^{00}$. 

Conversely, if $g \in (A^0 \cap A^{00}) \smallsetminus \Lambda_A$, then since $A^0 \cap A^{00}$ is a lamination, 
there is a  complementary region $\calR$ of $\Lambda_A$ with $g \subset \calR$. 
Since $g \in A^0$, if follows from (2b) and Lemma~\ref{lem:crucial} that no geodesic of $A$ intersects $\calR$ nontrivially 
and hence $\calG(\calR) \subset A^0$. 
Since $\calG(\calR) \not= \varnothing$, the region $\calR$ must have a least four ideal vertices and 
hence there is $h \in \calG(\calR) \subset A^0$ with $i(g,h) = 1$ implying $g \notin A^{00}$, a contradiction. 
\end{proof}

\subsection{The structure of subsets of geodesics in $\Sigma$}\label{s:3.2}
Let now $S=\Gamma\backslash \H$ be a geometrically finite hyperbolic
surface and $\Sigma=\Gamma\backslash C$ its convex core. 
The covering projection $\pi:\H\to S$ induces a map $\calG(\H)\to \calG(S)$ still denoted by $\pi$. 
For $g,h\in\calG(S)$ we define $i(g,h)$ as the sum of $i(g',h')$ 
where $(g',h')$ runs through a fundamental domain for the $\G$-action on $\pi^{-1}(g)\times\pi^{-1}(h)$; 
if $g,h$ are distinct closed geodesics this recovers the usual intersection number. 
For subsets $A,B\subset \calG(S)$ we extend the definition of $i$ to $i(A,B)$ as in \S\ref{s:3.1}, and define $A^0$ analogously. 
Given $A\subset \calG(\Sigma)$ we consider the set $\calE_A$ of solitary elements among the set of closed geodesics in $A^0$, 
that is 
\begin{equation}
\label{eq:EA}
\begin{aligned}
\calE_A:=\{c\in\calG(\Sigma):\, c\text{ closed},\, i(A, c)=0\text{ and }i(c,c')=0&\\
 \quad \forall  c'\in \calG(\Sigma)\text{ with }c' \text{ closed and }i(A,c')=0&\}\,.
\end{aligned}
\end{equation}
Observe that if $A$ is the support of a geodesic current $\mu$ on
$\Sigma$, $\calE_A$ is nothing but
the set $\calE_\mu$ defined in the introduction. In general $\calE_A$
consists of simple, closed,
pairwise disjoint geodesics and contains all the boundary components of $\Sigma$.
In particular $\calE_A$ is a geodesic lamination and furthermore it  induces a partition of  $A$: 
\bqn
A=(A\cap \calE_A) \sqcup \bigsqcup\limits_{\calR}A_{\calR}
\eqn
where the disjoint union is over all complementary regions $\calR$ of $\calE_A$
% complementary components $\Sigma'$ of $\calE_A$ in $\Sigma$,
and $A_{\calR} :=A\cap\calG(\calR)$.

%%% Decomposition by special geodesics
\begin{Proposition}
\label{prop:decA}
Let $A$ be a subset of $\calG(\Sigma)$, and let $\calE_A$ be 
%the set of $(A^0\cap\text{closed},\calG(\Sigma))$-solitary geodesics
as in \eqref{eq:EA}. 
Then for every complementary region $\calR$ of $\calE_A$ precisely one of the following holds:
\begin{enumerate}
\item either no geodesic of $A$ meets $\calR$,
\item or any closed geodesic $c\subset\Sigma$
%%% def special without boundary 
%  not contained in $\partial \Sigma$ and 
meeting $\calR$ must intersect transversely some geodesic of $A_{\calR}$.
%  of $A$, contained in $\Sigma'$.
\end{enumerate}  
\end{Proposition}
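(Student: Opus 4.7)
The plan is to verify the dichotomy by contradiction, applying Proposition~\ref{prop:3.1} to the lifted set $\widetilde{A_\calR}:=\pi^{-1}(A_\calR)$ and exploiting the maximality built into the definition of $\calE_A$.

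A preliminary observation is that if any geodesic $a\in A$ meets the open region $\calR$, then $a$ must be contained in $\calR$: a transverse crossing of a boundary leaf $e\in\partial\calR\subset\calE_A$ would give $1\le i(a,e)\le i(A,e)=0$, contradicting the definition of $\calE_A$. Hence either (1) holds, or $A_\calR\neq\emptyset$. By the same token, any closed geodesic $c'\subset\Sigma$ with $i(A,c')=0$ that meets $\calR$ must lie in $\calR$: for $e\in\calE_A$ being solitary forces $i(e,c')=0$, so $c'$ cannot cross any leaf of $\calE_A$.

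Assume $A_\calR\neq\emptyset$ and, for contradiction, let $c$ be a closed geodesic meeting $\calR$ with $i(c,A_\calR)=0$. Fix a lift $\widetilde\calR$ of $\calR$; the key structural observation is that each $i$-connected component of $\widetilde{A_\calR}$ is confined to a single $\Gamma$-translate of $\widetilde\calR$, because geodesics lying in distinct translates are separated by lifts of $\calE_A$-leaves and are therefore disjoint. Applying Proposition~\ref{prop:3.1} to $\widetilde{A_\calR}$ yields a lamination $\Lambda$ in $\H$ whose complementary regions are either empty of $\widetilde{A_\calR}$ or equal to the hull of a single $i$-component. Any lift $\widetilde c$ lies in $\widetilde{A_\calR}^0$ (since $i(c,A_\calR)=0$), hence in an empty complementary region $\calU$; since $\widetilde c$ is the axis of a hyperbolic $\gamma\in\Gamma$, the region $\calU$ is $\gamma$-invariant, as $\gamma\calU\cap\calU\supset\widetilde c$ forces $\gamma\calU=\calU$.

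The main obstacle is to convert this configuration into a contradiction. The plan is to extract from $(\calU,\gamma)$ a \emph{simple} closed geodesic $c_0\subset\calR$ satisfying $i(A,c_0)=0$ that is solitary in $\Sigma$; by definition such a $c_0$ would lie in $\calE_A$, contradicting $\calE_A\cap\calR=\emptyset$. The extraction proceeds by (i) a surgery argument at self-intersections to replace $c$ by a simple closed curve in $\calR$ disjoint from $A$; (ii) analyzing the boundary structure of $\calU$, which consists of leaves of $\Lambda$ arising as boundary geodesics of hulls of $i$-components of $\widetilde{A_\calR}$, to identify axes of hyperbolic elements with the desired properties; and (iii) a maximality argument over disjoint collections of simple closed geodesics in $\calR$ disjoint from $A$, where the solitary condition is verified using the preliminary observation that every closed curve with $i(A,\cdot)=0$ meeting $\calR$ is contained in $\calR$. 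Step (iii) is the crux of the argument: one must show that the maximally chosen $c_0$ is disjoint from \emph{every} closed curve in $\Sigma$ with $i(A,\cdot)=0$, not merely those lying in $\calR$.
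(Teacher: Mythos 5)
Your strategy is genuinely different from the paper's: you apply Proposition~\ref{prop:3.1} to the lifted set $\wt{A_\calR}$ of geodesics of $A$ inside $\calR$, whereas the paper applies it to the ``dual'' set $\wt B := \pi^{-1}\{c\subset\Sigma : c \text{ closed},\ i(c,A)=0\}$. The paper's route is cleaner because the lamination $\Lambda_{\wt B}$ is, after a geometric argument, shown to \emph{equal} $\wt\calE_A$; from there the dichotomy is read off directly from Proposition~\ref{prop:3.1}(2). Your route instead seeks to \emph{produce} a new element of $\calE_A$ inside $\calR$, which is harder: it requires proving existence of a solitary closed geodesic from scratch rather than recognizing the lamination globally.

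The gap is precisely where you flag it. Your steps (i)--(iii) are sketches, not arguments, and the central difficulty is not resolved. In step (ii) you propose extracting closed geodesics from the boundary leaves of $\calU$, but there is no reason those leaves close up: $\Lambda_{\wt{A_\calR}}$ is generically an irrational lamination and its leaves, including those bounding an ``empty'' region, can be non-compact. In step (iii), a curve $c_0$ chosen by a maximality condition over disjoint collections in $\calR$ is not automatically solitary among \emph{all} closed geodesics in $\calR$ with $i(A,\cdot)=0$ (think of a pants curve in a subsurface of $\calR$ entirely disjoint from $A$: it is disjoint from the other pants curves but crosses plenty of non-simple closed curves that are also disjoint from $A$). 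Your preliminary observation correctly reduces the solitarity requirement to curves \emph{inside} $\calR$, but it does not discharge the reduced claim. The paper handles the analogous difficulty by a specific geometric argument: for an $i$-connected component $\wt C$ of $\wt B$ with $|\wt C|>1$, if the complementary region $\calQ=\pi(\Hull(\wt C))$ has a non-closed boundary leaf, that leaf spirals into a crown bounded by a simple closed geodesic $c$; since $\calQ$ is not a crown, $c\subset\calQ$, and one then verifies $c\in\calE_A$ directly, a contradiction. Nothing equivalent appears in your sketch, and the tools you invoke (surgery at self-intersections, maximality of disjoint collections) do not obviously furnish the needed crown analysis. To complete your approach you would essentially have to redo this argument for $\wt{B_\calR}:=\{c\subset\calR : c\text{ closed},\ i(c,A)=0\}$, at which point the paper's global treatment of $\wt B$ is both more economical and yields the equality $\wt\Lambda=\wt\calE_A$, which is used again in Proposition~\ref{prop:refdecA}.

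Finally, a small point worth noting: the paper's proof of Proposition~\ref{prop:decA} as written concludes $i(c,A)>0$, which a priori is weaker than the stated $i(c,A_\calR)>0$; your preliminary containment observation (any $a\in A$ meeting $\calR$ lies in $\calR$, hence is in $A_\calR$) is exactly the remark one would use to upgrade $i(c,A)>0$ to $i(c,A_\calR)>0$ once one knows the intersection point is inside $\calR$. That observation is a correct and useful addition, but it does not fill the main gap identified above.
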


The proof of Proposition~\ref{prop:decA} uses the structure of complementary regions of a compactly supported geodesic lamination $\Lambda\subset\Sigma$ in a complete finite area hyperbolic surface. Namely that the complement $\Sigma\smallsetminus\Lambda$ 
is a finite union of components of the following types \cite[Theorem I.4.2.8]{notesonnotes}:
\be
\item an ideal polygon;
\item an ideal polygon containing one cusp;
\item a totally geodesic subsurface with geodesic boundary to which one has added a \emph{crown} to some boundary geodesic (such a subsurface can possibly be reduced to a single geodesic).
\ee
\noindent A crown is an infinite cylinder bounded by a geodesing on one side and by finitely many ideal sides on the other.
We will need the following
\begin{Lemma}\label{l.crown}
	Let $c\subset \Sigma\setminus \Lambda$ be a geodesic bounding a crown $\calQ$. Any closed geodesic $c'\subset \Sigma$ intersecting $c$ intersects a leaf of $\Lambda$.
\end{Lemma}	
\begin{proof}
 Assume by contradiction that there exists a closed geodesic $c'$ that intersects $c$ but doesn't intersect any leaf of $\Lambda$. We choose intersecting lifts $g, g'$ of $c,c'$ in $\H$. The lift $\wt \calQ$ of $\calQ$ which is bounded by $g$ on one side is an infinite strip, bounded on the other side by countably many geodesics $l_i\in\wt \Lambda:=\pi^{-1}(\Lambda)$ indexed so that $l_i$ shares the endpoint $p_i\in\partial\H$ with $l_{i+1}$. 
Since $c'$ doesn't intersect any leaf of $\Lambda$, there exists $i$ such that $p_{i}$ is an endpoint of $g'$.  We denote by $\gamma\in\Gamma$ the hyperbolic element with axis $g'$, and assume without loss of generality that $p_i=\gamma_+$. Then for a sufficiently high power of $\gamma$, $\gamma^n g$ intersects $l_i$ and $l_{i+1}$. This implies that $c$ intersects the lamination $\Lambda$, a contradiction.
\end{proof}

\begin{center}
	\begin{figure}[h]
		\begin{tikzpicture}
			\draw (0,0) circle [radius=2];
		
			\draw (-0.9,1.75)  arc (36:-36:3);
                    \draw (2,0) node [right] {$p_0$} arc (220:188:3);
                    \draw(1.35,1.5) node [above right] {$p_1$} arc (-90: -130: 2);
                    \draw(0.1,2) node [above] {$p_2$} arc (-45: -125: .5);
                    \draw (2,0) node [right] {$p_0$} arc (-220:-188:3);
                    \draw(1.35,-1.5) node [below right] {$p_{-1}$} arc (90: 130: 2);
                    \draw(0.1,-2) node [below] {$p_{-2}$} arc (45: 125: .5);
                    \node at (0.5,0) {$\wt \calQ$};
                    \node at (-0.65,0) {$g$};

		\end{tikzpicture}
		\caption{The lift $\wt \calQ$ of a crown is an infinite strip bound by a geodesic $g$ on one side and countably many geodesics on the other side.}
	\end{figure}
\end{center}
	
%{\color{gray}
%A compactly supported geodesic lamination $\Lambda\subset\Sigma$ \emph{fills} a subsurface $\Sigma'\subseteq \Sigma$ with totally geodesic boundary if, for every closed curve $c\subset \mathring{\Sigma}'$, $c$ intersects some leaf of $\Lambda$. Observe that a measured geodesic lamination $(\Lambda, m)$, when regarded as a geodesic current, never fills a surface $\Sigma$, not even when the lamination $\Lambda$ does fill. A geodesic lamination $\Lambda$ is \emph{minimal} if every half ray of a geodesic belonging to $\Lambda$ is dense in $\Lambda$.  
%}

\begin{proof}[Proof of Proposition \ref{prop:decA}]  Let $B = \{ c \subset \Sigma$: $c$ is a closed geodesic and $i(c,A) = 0\}$. 
	We apply Proposition~\ref{prop:3.1} to the $\Gamma$-invariant set of geodesics $\wt{B}: = \pi^{-1} (B) \subset \calG(\H)$ 
and let $\widetilde{\Lambda} : = \Lambda_{\wt{B}}$ be the corresponding lamination. 
Set $\widetilde{\calE}_A : = \pi^{-1} (\calE_A)$. Since $\wt B\subset \wt B^{00}$, we have the following inclusion: 
$\wt{\calE}_A = \wt{B} \cap \wt{B}^0 \subset \wt{B}^{00} \cap \wt{B}^0 = \wt{\Lambda}$. 
% Since every geodesic in $\wt{B}\cap\wt{B}^0$ projects to a closed geodesic 
% and the same is not true anymore for geodesic in $\wt{B}^{00}\cap\wt{B}^0$,
% we need to show that every leaf of $\wt\Lambda$ projects to a closed geodesic.
\begin{claim}
In fact $\wt{\calE}_A = \wt{\Lambda}$.
\end{claim}
%We proceed to show  that in fact $\wt{\calE}_A = \wt{\Lambda}$.
\begin{proof}\phantom\qedhere
Note that, denoting  $\wt{A} : = \pi^{-1}(A)$,
we have $\wt{A}\subset\wt{B}^0$, hence   $\wt{B}^{00}\subset\wt{A}^0$.
Since $\wt B=\wt A^0\cap \{\text{closed geodeiscs of $\Sigma$}\}$, this implies that
 a leaf of $\wt\Lambda$ is in
$\wt{\calE}_A$ if and only if projects to a closed geodesic.
Recall now that
\bqn
\wt{\Lambda} = \overline{\bigcup\limits_{\wt{C} \text{$i$-cc of }\wt{B}} \,\Delta({\rm Hull}(\wt{C}))}\,.
\eqn
As $\wt{\calE}_A$ is closed, it is then enough to show that
$\Delta({\rm  Hull}(\wt{C})) \subset \wt{\calE}_A$ for all $\wt{C}$.

Let $\wt{C}$ be an $i$-cc  of $\wt{B}$ and assume first that $\wt{C} = \{b\}$. 
Then $b \in \wt{B}$ and $i(b,\wt{B}) = 0$, so that $b \in \wt{B} \cap \wt{B}^0 = \wt{\calE}_A$. 
Assume now that $|\wt{C}| > 1$ and let $\calQt = {\rm Hull}(\wt{C})$. Then $\calQ:=\pi(\calQt)$ is a complementary region in $\Sigma$ of the lamination $\Lambda : = \pi(\wt{\Lambda})$. {For the sake of contradiction,} assume that the boundary of $\calQt$ contains a leaf that does not close in $\Sigma$; by the structure of complementary regions \cite[Theorem I.4.2.8]{notesonnotes},
the projection of this leaf is then part of a crown bounded by a simple closed geodesic $c \subset \Sigma$; 
moreover since $|\wt{C}| > 1$, $\calQ$ contains at least a pair of intersecting closed geodesics and hence the region $\calQ$ is not reduced to a crown, 
implying $c \subset \calQ$. Let $g \subset \calQt$ be a lift of $c$. Since $i(\wt{A}, \wt{B}) = 0$, 
where $\wt{A} : = \pi^{-1}(A)$, Proposition~\ref{prop:3.1} (2) implies that no geodesic in $\wt{A}$ meets $\calQt$, hence $g \in \wt{B}$. 
If $c' \subset \Sigma$ is a closed geodesic with $i(c',c) > 0$, then % $c'$ cannot be contained in $\calQ$, since $c$ bounds a crown in $\calQ$, hence 
for a suitable lift $g'$ of $c'$ we have $i(g',g) > 0$ and thus  $g' \not\subset \calQt$ by Lemma \ref{l.crown}. 
This implies $g' \notin \wt{B}$, hence $i(g', \wt{A}) > 0$ and $i(c',A) > 0$ which shows that $c \in \calE_A \subset \Lambda$, a contradiction.
\end{proof}

Let now $\calR$ be a complementary region of $\Lambda$ and assume that
some geodesic of $A$ intersects $\calR$. 
Let $c \subset \Sigma$ be a closed geodesic intersecting $\calR$. 
Let $\calRt$ be a complementary region of $\wt{\Lambda}$ lifting $\calR$ and $g$ a lift of $c$ intersecting $\calRt$. 
Since $\wt{A}$ intersects $\calRt$ no geodesic of $\wt{B}$ can intersect $\calRt$ (Proposition~\ref{prop:3.1}(2)) and 
hence $g \notin \wt{B}$, that is $c \notin B$, hence $i(c,A) > 0$.
\end{proof}

We now combine Proposition~\ref{prop:3.1} and Proposition~\ref{prop:decA} to obtain a refined decomposition of $\Sigma$ associated with $A$ analogous to Theorem~\ref{thm:1}.

%%% With concrete definition of $\Lam_\mu$
% We consider the set $\Lam_A$ of \emph{solitary} $\mu$-short geodesics in $\Sigma$, 
% namely
% the set of $\mu$-short geodesics 
% $g\subset \Sigma$ which do not intersect
% transversely any other $\mu$-short geodesic $h\subset \Sigma$.

%%% Refined decomposition

\begin{Proposition}
\label{prop:refdecA}
Let $A$ be a subset of $\calG(\Sigma)$ and $\wt{A}=\pi^{-1}(A)$.
Let $\Lamref_{\wt{A}}=\calE_A\cup\Lam_A$, where $\Lam_{\wt{A}}$ is the lamination associated to $A$ by Proposition~\ref{prop:3.1}
and $\pi(\Lam_{\wt{A}})=\Lambda_A$.
Then  $\Lamref_A$ is a lamination and we have

\begin{enumerate}

\item 
\label{refdecA-it- in A0}
$i(\Lamref_A,A)=0$;
% The leaves of $\Lamref_A$ do not intersect transversely any geodesic
%   in $A$; 
in particular
  \[A=(A\cap \Lamref_A) \sqcup \bigsqcup_{\calR} A_{\calR} \]
where $\calR$ runs over the complementary regions of $\Lamref_A$ and $A_\calR:=A\cap\calG(\calR)$.
\item 
\label{refdecA-it- dicho}
 For every complementary region $\calR$ of $\Lamref_A$ precisely one of the following holds:
\begin{enumerate}
\item either $\calR$ does not meet any geodesic of $A$.
%  in particular $A_\calR=\varnothing$;
\item or
%% A is binding on calR 
every geodesic $g$ of $\Sigma$ meeting $\calR$ intersects transversely some geodesic in $A$. 
\end{enumerate}

\item 
\label{refdecA-it- not meeting 0-cc}
$\Lamref_A$ does not meet any of the complementary regions $\calR$ of $\calE_A$ meeting no geodesic of $A$.
  
\item 
\label{refdecA-it- 0-closed}
For every closed geodesic  $c\subset \Sigma$, if $i(c,A)= 0$, then $i(c,\Lamref_A)= 0$.
% $c$ do not intersect
% transversely any leaf of $\Lamref_A$ ; 

%% %%% Alternative version
%% Any closed geodesic  $c\subset \Sigma$ intersecting transversely some
%% leaf of $\Lamref_A$ must intersect transversely some geodesic in $A$;

\item 
\label{refdecA-it- closed leaves}
$\calE_A$ is the set of closed leaves of $\Lamref_A$.

\end{enumerate}
\end{Proposition}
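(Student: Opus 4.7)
The plan is to prove the five items in sequence by lifting to $\H$ and combining Proposition~\ref{prop:3.1} applied to $\wt{A}$ with the already-established Proposition~\ref{prop:decA} on $\calE_A$. The foundation is the pair of inclusions $\wt{\calE}_A\subset \wt{A}^0$ (from the definition of $\calE_A$, which forces $i(\wt{\calE}_A,\wt{A})=0$) and $\Lam_{\wt{A}}\subset \wt{A}^0\cap \wt{A}^{00}$ (from Proposition~\ref{prop:3.1}(1)). Together these force $i(\wt{\calE}_A,\Lam_{\wt A})=0$: for any $\tilde c\in \wt{\calE}_A\subset \wt A^0$ and $g\in \Lam_{\wt A}\subset \wt A^{00}$, the definition of $\wt A^{00}$ gives $i(g,\tilde c)=0$. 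Hence no leaf of $\wt{\calE}_A$ crosses a leaf of $\Lam_{\wt A}$, and $\Lamref_A$ is a geodesic lamination.

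Item (\ref{refdecA-it- in A0}) is immediate, since $i(\calE_A,A)=0$ by definition of $\calE_A$ and $i(\Lambda_A,A)=0$ by Proposition~\ref{prop:3.1}(1); the partition follows because each $a\in A$, being a geodesic, is either a leaf of $\Lamref_A$ or disjoint from its support, hence contained in a unique complementary region. For item (\ref{refdecA-it- dicho}), suppose $\calR$ is a complementary region of $\Lamref_A$ with $A_\calR\neq\varnothing$. Choosing a component $\wt\calR$ of $\pi^{-1}(\calR)$, it is contained in a complementary region $\wt\calR_2$ of $\Lam_{\wt A}$ meeting $\wt A$; Proposition~\ref{prop:3.1}(2) then asserts that every geodesic meeting $\wt\calR_2$, and in particular any lift of a geodesic meeting $\calR$, crosses some element of $\wt A$ transversally, yielding~(2b).

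For item (\ref{refdecA-it- not meeting 0-cc}) let $\calR$ be a complementary region of $\calE_A$ not meeting $A$; since $\calE_A\cap\calR=\varnothing$ we only need $\Lambda_A\cap\calR=\varnothing$. Passing to a component $\wt\calR$ of $\pi^{-1}(\calR)$: since $i(\wt{\calE}_A,\wt A)=0$ no geodesic of $\wt A$ can cross $\partial\wt\calR$, and since $A\cap\calR=\varnothing$ no geodesic of $\wt A$ lies in $\wt\calR$; hence $\wt A$ is entirely disjoint from $\wt\calR$. For any $i$-cc $\wt C\subset \wt A$, all geodesics of $\wt C$ must then have their endpoints in a single component of $\bH\smallsetminus \partial\ov{\wt\calR}$: two transversally intersecting geodesics disjoint from the convex set $\ov{\wt\calR}$ necessarily share a side, and $i$-connectedness extends this to the whole $\wt C$. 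Consequently $\Hull(\wt C)$ lies in a single complementary region of $\ov{\wt\calR}$, so $\Delta(\Hull(\wt C))$ and (after closure in $\calG(\H)$) $\Lam_{\wt A}$ are disjoint from $\wt\calR$.

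Items (\ref{refdecA-it- 0-closed}) and (\ref{refdecA-it- closed leaves}) follow from the same mechanism. If $c\subset \Sigma$ is closed with $i(c,A)=0$ then $c$ belongs to $B:=\{c'\text{ closed}: i(c',A)=0\}$, whence $i(c,\calE_A)=0$ by the solitary condition built into the definition of $\calE_A$, and $i(c,\Lambda_A)=0$ because a lift $\tilde c$ lies in $\wt A^0$ while $\Lam_{\wt A}\subset \wt A^{00}$. Conversely, a closed leaf $c$ of $\Lamref_A$ is either in $\calE_A$ or in $\Lambda_A$; in the latter case $\tilde c\in \Lam_{\wt A}\subset \wt A^0$ yields $c\in B$, and the same two inclusions show $i(c,c')=0$ for every $c'\in B$, so $c$ is solitary in $B$ and lies in $\calE_A$. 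The main technical obstacle is the convexity argument in item~(\ref{refdecA-it- not meeting 0-cc}), which is what prevents $\Lam_{\wt A}$ from leaking into those complementary regions of $\wt{\calE}_A$ that $\wt A$ itself avoids.
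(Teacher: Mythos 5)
Your proposal is correct, and for items (1), (2), (5) it coincides in substance with the paper's argument (the key inclusions $\wt{\calE}_A\subset\wt A^0$ and $\Lam_{\wt A}=\wt A^0\cap\wt A^{00}$, yielding $i(\wt{\calE}_A,\Lam_{\wt A})=0$ and the refinement argument for the dichotomy). Where you diverge is in items (3) and (4), and the divergence is genuine and worth noting. For (3), the paper argues by contradiction: if $\Lamref_A$ meets a complementary region $\calR$ of $\calE_A$ avoided by $A$, pick $g\in\Lam_A$ in $\calR$ and use the fact that geodesics of $A$ accumulate on $g$ from one side, forcing $A$ to meet $\calR$ after all. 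You instead argue forward, using the convexity of $\ov{\wt\calR}$: since $\wt A$ avoids $\wt\calR$ and $i(\wt{\calE}_A,\wt A)=0$, each $i$-connected component $\wt C$ of $\wt A$ lies in a single component of $\H\smallsetminus\ov{\wt\calR}$ (two transversally intersecting geodesics disjoint from a convex set meet in a point on one side, hence lie on that side), so $\Hull(\wt C)$ and thus $\Delta(\Hull(\wt C))$ avoid $\wt\calR$, and the same holds after taking the closure because the set of geodesics meeting the open region $\wt\calR$ is open in $\calG(\H)$. For (4), the paper routes through Proposition~\ref{prop:decA} and item (3) to conclude, whereas you prove it directly: $i(c,\calE_A)=0$ is built into the solitarity defining $\calE_A$, and $i(c,\Lam_A)=0$ follows from $\tilde c\in\wt A^0$ against $\Lam_{\wt A}\subset\wt A^{00}$. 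Your version is arguably cleaner and makes (4) independent of (3). One minor wording quibble: in item (2) you write that ``any lift of a geodesic meeting $\calR$'' meets $\wt\calR_2$; different lifts meet different $\Gamma$-translates of $\wt\calR_2$, but since $\Lam_{\wt A}$ and $\wt A$ are $\Gamma$-invariant the conclusion from Proposition~\ref{prop:3.1}(2b) still applies, so the argument is sound even if the phrasing is slightly loose.
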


\begin{proof}
We implicitly use that the assertions of Proposition~\ref{prop:3.1} hold true verbatim once
projected to $\Sigma$.

First observe that, because of Lemma~\ref{lem:crucial}, we  have $i(\calE_A,\Lam_A)=0$, hence
$\Lamref_A$ is a lamination.
The statements (\ref{refdecA-it- in A0}) and (\ref{refdecA-it- closed leaves})  are clear as  $\calE_A$ and $\Lam_A$ are included in $A^0$ by definition, 
and the closed geodesics in $\Lam_A$ are  in $\calE_A$.

The dichotomy (\ref{refdecA-it- dicho}) is true for $\Lam_A$  by Proposition~\ref{prop:3.1}, 
hence also for $\Lamref_A$ as $\Lamref_A$ refines $\Lam_A$ and $\Lamref_A$ is contained in $ A^0$.
%%% (so if $\calR$ meets $g$ in $A$ then $g\subset \calR$).

To see (\ref{refdecA-it- not meeting 0-cc}), 
observe that if $\calR$ is a complementary region of $\calE_A$ such that $\Lamref_A\cap\calR\neq\varnothing$, 
then there is a geodesic $g$ in $\Lam_A$ contained in $\calR$, since $\Lamref_A$ is a lamination refining $\calE_A$.
But then on one side of $g$ there must be a geodesic in $A$ that meets $\calR$.

% if  $\calR$ is a complementary region of $\calE_A$ meeting no geodesic of $A$, 
%then every geodesic $g$ contained in $\calR$ is in $A^0$. Since every geodesic in $\calR$ is transverse to some other geodesic in $\calR$,  
%this implies that  $\calR$ cannot contain any geodesic in $A^{00}$. In particular $\calR$ contains no geodesic of $\Lam_A$ since $\Lam_A\subset A^{00}$.

Assertion (\ref{refdecA-it- 0-closed}) follows from  (\ref{refdecA-it- not meeting  0-cc})  as, by Proposition~\ref{prop:decA},  
any closed geodesic $c$ with $i(c,A)=0$ is either in $\calE_A$ or contained in a complementary region $\calR$ of $\calE_A$ meeting no geodesic of $A$. 
\end{proof}

\subsection{Proof of  Theorem~\ref{thm:1}}\label{s:3.3}

Let $\mu$ be a geodesic current on $\Sigma$, $\supp \mu  \subset \calG(\H)$ its support and $A: = \pi(\supp \mu)$. 
Observe that by definition a geodesic $g$ in $\Sigma$ is $\mu$-short if some (and hence every) lift $\wt{g}$ of $g$ satisfies $i(\wt{g}, \supp \mu) = 0$. 
This implies that $\calE_A = \calE_\mu$ and $\Lambda_A = A^0 \cap A^{00} = \Lambda_\mu$. 
Then Theorem~\ref{thm:1}(1) follows from Proposition~\ref{prop:decA} and Theorem~\ref{thm:1}(2) follows from Proposition~\ref{prop:refdecA}.

\section{Straight pseudo-distance, length shortening and systole}\label{sec:4}

The main objective of this section is to show
 that the systole of a current on a finite
area surface $\Sigma$ with geodesic boundary can be computed using
simple closed geodesics, provided $\mathring{\Sigma}$ is not the
thrice punctured sphere (Corollary~\ref{cor:SimpleSyst}).
This relies on two main ingredients:
\be
\item the fact that a geodesic current gives rise to an appropriate pseudo-distance on $\H$ and 
that the length function associated to this pseudo-distance behaves very much like the hyperbolic length. 
A similar study, in the case of closed surfaces, was carried out by Glorieux in \cite{Glorieux}.
\item A Length-Shortening-Under-Surgery Lemma in the spirit of \cite{Martone_Zhang},
with the additional difficulty due to the presence of boundary components.
\ee

\medskip
A pseudo-distance $d$ on $\H$ is a symmetric function $d: \H \times \H \rightarrow [0,\infty)$ vanishing on the diagonal and verifying the triangle inequality. 
We say that a pseudo-distance $d$ is {\em straight} 
if whenever three points $x,y,z$ lie on a geodesic segment on $\H$ (for the hyperbolic metric) in this order, we have
\bqn
d(x,y) + d(y,z) = d(x,z).
\eqn
We emphasize that such a pseudo-distance is not necessarily continuous for the standard topology on $\H$.

We now turn to the construction of a straight pseudo-distance associated to a geodesic current on $\H$.
Given a geodesic current $\mu$ on $\H$, define for $x,y \in \H$:
\bqn
d_\mu(x,y) = \frac{1}{2} \;\big\{\mu(\calG^\pitchfork_{[x,y)}) + \mu(\calG^\pitchfork_{(x,y]})\big\}
\eqn
where for a possibly empty geodesic segment $I \subset \H$ we define
\bqn
\calG^\pitchfork_I = \{g \in \calG(\H): |g \cap I | = 1\}.
\eqn

\medskip
\begin{center}
\begin{figure}[h]
\begin{tikzpicture}
\draw (0,0) circle [radius=2];
\draw (-1.414,1.414) -- (1.414,-1.414);
\draw (-2,0) arc (90: 37:4);
\draw (0,2) arc (180: 233:4);
\draw (-0.8,1.83) arc (201.5:231.6:7.5);
\draw (-1.88,.7) arc (70:40:7.5);
% geodesic arc I
\draw (.5,0.05) arc (125:147.5:2);
\draw (.5,0.1) node[right] {$I$};
\begin{scope}[rotate=30]
\draw (-1.88,.7) arc (70:40:7.5);
\end{scope}
\begin{scope}[rotate=-40]
\draw (-0.8,1.83) arc (201.5:231.6:7.5);

\end{scope}
\end{tikzpicture}
\caption{ Some of the geodesics in $\calG^\pitchfork_I$}
\end{figure}
\end{center}

\begin{Proposition}\label{prop4.1}
The function $d_\mu$ is a straight pseudo-distance.
\end{Proposition}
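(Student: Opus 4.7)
My plan is to express $d_\mu$ as a single integral against $\mu$ of a pointwise-defined weight $f_{x,y}(g)$, and then deduce all four required properties from pointwise identities/inequalities on $f_{x,y}$.

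Define
\[ f_{x,y}(g) := \tfrac12\bigl(\mathbf{1}_{\calG^\pitchfork_{[x,y)}}(g) + \mathbf{1}_{\calG^\pitchfork_{(x,y]}}(g)\bigr),\]
so that by linearity $d_\mu(x,y)=\int_{\calG(\H)}f_{x,y}(g)\,d\mu(g)$. Using that two distinct geodesics of $\H$ meet in at most one point, a short inspection yields $f_{x,y}(g)\in\{0,\tfrac12,1\}$: the weight is $1$ exactly when $g$ crosses the open segment $(x,y)$ transversally, it equals $\tfrac12$ exactly when $g$ meets $\{x,y\}$ transversally at a single endpoint, and it is $0$ otherwise (including when $g$ contains $[x,y]$). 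Finiteness of $d_\mu(x,y)$ is immediate since $\calG^\pitchfork_I$ for a compact segment $I$ has compact closure in $\calG(\H)\cong(\bH)^{(2)}$ and hence finite $\mu$-measure.

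Symmetry and vanishing on the diagonal are immediate from the definition. For straightness, let $x,y,z\in\H$ lie on a hyperbolic line $\ell$ in this order. Any $g\neq\ell$ meets $\ell$ in at most one point $p$; enumerating the subcases $p$ absent, $p\notin[x,z]$, $p=x$, $p\in(x,y)$, $p=y$, $p\in(y,z)$, $p=z$, one checks the pointwise additive identity
\[f_{x,z}(g)=f_{x,y}(g)+f_{y,z}(g),\]
the endpoint cases $p\in\{x,y,z\}$ balancing precisely because of the averaging of $[x,y)$ and $(x,y]$. For $g=\ell$ all three weights vanish. Integrating against $\mu$ yields $d_\mu(x,z)=d_\mu(x,y)+d_\mu(y,z)$.

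The triangle inequality reduces to the pointwise bound $f_{x,z}(g)\le f_{x,y}(g)+f_{y,z}(g)$ for $x,y,z$ forming a (possibly degenerate) hyperbolic triangle $\Delta$. The case $f_{x,z}(g)=0$ is trivial. If $f_{x,z}(g)=1$, then $g$ crosses the interior of $[x,z]$ transversally at a point $p$ and hence enters $\Delta$; since $g$ meets the line $\ell_{xz}$ only at $p$, the exit point $q$ of $g$ from $\Delta$ lies in $([x,y]\cup[y,z])\setminus\{x,z\}$, leaving three subcases $q\in(x,y)$, $q=y$, $q\in(y,z)$, each of which gives $f_{x,y}(g)+f_{y,z}(g)\ge1$. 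If $f_{x,z}(g)=\tfrac12$, say $g$ passes transversally through $x$ only, then a similar but shorter case analysis according to whether $g$ enters $\Delta$ and, if so, where it exits, yields $f_{x,y}(g)+f_{y,z}(g)\ge\tfrac12$. I expect the main obstacle to be the bookkeeping with the half-open conventions: the averaging in the definition of $d_\mu$ was designed precisely so that vertex-crossings split as $\tfrac12+\tfrac12$ and the inequality closes tightly.
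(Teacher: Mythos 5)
Your proof is correct and takes essentially the same approach as the paper: both arguments reduce straightness and the triangle inequality to pointwise statements about a single geodesic $g$ (case analysis on how $g$ meets the relevant segments, with the half-open-interval averaging balancing the vertex cases) and then integrate against $\mu$. The paper phrases this as the set inclusions $\calG^\pitchfork_{[x,z)}\subset\calG^\pitchfork_{[x,y)}\cup\calG^\pitchfork_{[y,z)}$ and $\calG^\pitchfork_{(x,z]}\subset\calG^\pitchfork_{(x,y]}\cup\calG^\pitchfork_{(y,z]}$, while you package the two halves into one averaged weight $f_{x,y}\in\{0,\tfrac12,1\}$ and organize the triangle-inequality cases by the entry/exit points of $g$ in the geodesic triangle; these are two bookkeeping styles for the same computation.
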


\begin{proof}
By definition $d_\mu$ is symmetric and vanishes on the diagonal.

In order to check the triangle inequality we may assume that the points $x,y,z$ are pairwise distinct. 

\noindent
\begin{minipage}{.65\textwidth}
If $g\in\calG^\pitchfork_{[x,y)}$, then either $g \in \calG^\pitchfork_{(x,y)}$ or $g \cap [x,y) = \{x\}$.
If $g \in \calG^\pitchfork_{(x,y)}$, then either $g \in \calG^\pitchfork_{[x,z)}$ or $g \in \calG^\pitchfork_{[z,y)}$.  
If on the other hand $g \cap [x,z) = \{x\}$, then either $g \in \calG^\pitchfork_{[x,y)}$ or otherwise $g$ contains the segment $[x,y)$ 
which implies that $x,y,z$ don't lie on a geodesic and hence $g \in \calG^\pitchfork_{[y,z)}$. 
\end{minipage}
\begin{minipage}{.35\textwidth}
\begin{center}
\begin{tikzpicture}
\filldraw (-.5,0) circle [radius=1pt] node[left] {$x$};
\filldraw (.5,.866) circle [radius=1pt] node[above] {$y$};
\filldraw (.5,-.866) circle [radius=1pt] node[below] {$z$};
%\draw (.5,-.866) arc (300:420:1);
\draw (-.5,0) arc (270:354:1);
\draw (.5,-.866) arc (8:90:1);
\draw (.5,.866) to (.5,-.866);
\end{tikzpicture}
\end{center}
\end{minipage}
This shows the inclusion
\bqn
\calG^\pitchfork_{[x,z)} \subset \calG^\pitchfork_{[x,y)} \cup \calG^\pitchfork_{[y,z)} .
\eqn
An analogous argument shows the corresponding statement for $(x,z]$ and concludes the proof of the triangle inequality. 

If the three points $x,y,z$ lie on a geodesic in this order, 
then $\calG^\pitchfork_{[x,z)}$ is the disjoint union of $\calG^\pitchfork_{[x,y)}$ and $\calG^\pitchfork_{[y,z)}$ and analogously for $(x,z]$. 
This implies that $d_\mu(x,z) = d_\mu(x,y) + d_\mu(y,z)$ and hence $d_\mu$ is straight.

\end{proof}

\begin{Example}\label{exam4.2} ~

\begin{enumerate}
\item
  If $\mu =\calL$  is the Liouville current on $\H$, the corresponding pseudo-distance $d_\mu$ is the hyperbolic metric $d_{\rm hyp}$.

\medskip
\item
  If $\mu \in \calC(\H)$ is a geodesic current on $\H$ such that $\supp(\mu) \subset \calG(\H)$ is a geodesic lamination, 
  a standard argument shows that the quotient metric space $X_\mu = \H/\!\!\sim$, 
  obtained by identifying points at $d_\mu$-distance zero, is $0$-hyperbolic in the sense of Gromov and 
  can therefore be canonically embedded in a complete $\R$-tree (see \cite{MS91} for instance).
\end{enumerate}
\end{Example}

Given a straight pseudo-distance $d$ on $\H$ we define as usual the length $L(c)$ of a continuous path $c:[r,s] \rightarrow \H$, $-\infty < r \le s < + \infty$ by
\bqn
L(c) = \sup\left\{ \displaystyle\sum\limits^n_{i=0} d\big(c(t_i), c(t_{i+1})\big)\!: \, n \ge 1, \,t_0 = r \le t_1 \le \dots \le t_n = s\right\}.
\eqn

\noindent
Of course $L(c)$ is invariant by monotone continuous reparametrization. The statements in the following lemma are straightforward verifications.

\begin{Lemma}\label{lem4.3} The length function $L$ associated to a straight pseudo-distance $d$ has the following properties:
\begin{enumerate}
\item
 If $c:[r,s]\to\H$ is a continuous path, then $L(c) \ge d(c(r),c(s))$ and if $c$ parametrizes a geodesic segment, equality holds.
\item
  If a path $c$ is the concatenation $c = c_1 * c_2$ of two paths $c_1,c_2$, then 
\bqn
L(c) = L(c_1) + L(c_2).
\eqn
\end{enumerate}
\end{Lemma}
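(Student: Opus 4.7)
The plan is to prove each assertion by exploiting only the definition of $L$ as a supremum over partitions together with the straightness axiom for $d$ and the triangle inequality, without any use of the hyperbolic structure beyond what is already built into $d$.

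For (1), the inequality $L(c)\ge d(c(r),c(s))$ is immediate from the trivial partition $\{r,s\}$, since $d(c(r),c(s))$ is one of the sums appearing in the supremum. For the equality when $c$ parametrizes a geodesic segment, I would take a monotone parametrization so that for any partition $r=t_0\le t_1\le\dots\le t_n=s$ the images $c(t_0),\dots,c(t_n)$ lie on a common hyperbolic geodesic in this order. An induction on $n$ using straightness of $d$ (applied to each consecutive triple) then collapses the telescoping sum to $d(c(t_0),c(t_n))=d(c(r),c(s))$, so every partition sum equals $d(c(r),c(s))$ and hence the supremum does too.

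For (2), write $c_1\colon[r,t]\to\H$ and $c_2\colon[t,s]\to\H$ with $c_1(t)=c_2(t)$, so $c=c_1*c_2$ is defined on $[r,s]$. For the inequality $L(c_1)+L(c_2)\le L(c)$, concatenate an arbitrary partition of $[r,t]$ with an arbitrary partition of $[t,s]$ to obtain a partition of $[r,s]$ through $t$; the corresponding partition sum equals the sum for $c_1$ plus the sum for $c_2$, and taking the supremum in each factor independently yields the bound. For the reverse inequality $L(c)\le L(c_1)+L(c_2)$, take any partition $r=t_0\le\dots\le t_n=s$ of $[r,s]$ and insert $t$ if it is not already a partition point; by the triangle inequality this refinement only increases the partition sum, and the refined sum splits as a partition sum for $c_1$ plus a partition sum for $c_2$, both bounded by $L(c_1)$ and $L(c_2)$ respectively.

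Neither step presents a genuine obstacle; the slightly delicate point is simply to be careful in part (1) that the parametrization is monotone so that straightness applies to each consecutive triple, and in part (2) to justify inserting the middle point $t$ into an arbitrary partition without decreasing the sum, which is exactly the triangle inequality.
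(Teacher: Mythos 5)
Your proof is correct and is exactly the ``straightforward verification'' the paper has in mind (the paper omits the details, declaring the statements to be routine). You correctly identify the two small points that need care: using a monotone parametrization so that straightness applies to consecutive triples in part (1), and using the triangle inequality to justify inserting the intermediate time $t$ into an arbitrary partition in part (2).
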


Let now $\Gamma < \PSL(2,\R)$ be a torsion-free discrete subgroup and $\Sf : = \Gamma \backslash \H$ be the corresponding quotient surface.
Given a $\Gamma$-invariant straight pseudo-distance $d$ on $\H$ 
we define the length $L(c)$ of a continuous path $c: [r,s] \rightarrow \Sf$ as the length $L(\wt{c})$ of any continuous lift $\wt{c}: [r,s] \rightarrow \H$.

\medskip
The following generalizes a fundamental property of hyperbolic length to length functions associated to straight pseudo-distances.

\begin{Proposition}\label{prop4.4}
Let $s\subset S$ be a closed geodesic represented by a hyperbolic element $\gamma\in\Gamma$ and 
let $p$ be a point on the axis of $\gamma$.  Then for every closed loop $c$ in the free homotopy class of $s$
%
%Let $c: [0,1] \rightarrow \Sf$, $c(0) = c(1)$, be a closed loop whose homotopy class is represented by a hyperbolic element $\gamma \in \Gamma$. 
%Let $p$ be a point on the axis of $\gamma$ and $s: [0,1] \rightarrow \Sf$ be a parametrization of the closed geodesic 
%obtained by projecting the axis of $\gamma$ to $\Sf$. Then
\bqn
L(c) \ge L(s) = d(p,\gamma p).
\eqn
\end{Proposition}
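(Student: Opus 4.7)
The plan is to decompose the statement into two pieces: first, identify $L(s)$ with the translation number $d(p,\gamma p)$; second, bound $L(c)$ below by this translation number for any freely homotopic loop. The main work sits in showing that the function $q\mapsto d(q,\gamma q)$ on $\H$ attains its infimum on the axis of $\gamma$.

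\textbf{Step 1: computing $L(s)$.} The closed geodesic $s$ is the projection of the geodesic segment $[p,\gamma p]$ on the axis of $\gamma$; any hyperbolic parametrization of this segment is a lift of a parametrization of $s$. By Lemma~\ref{lem4.3}(1) its $L$-length equals $d(p,\gamma p)$, so $L(s)=d(p,\gamma p)$. Moreover this quantity is independent of $p\in\mathrm{axis}(\gamma)$: if $p,p'$ lie on the axis in the cyclic order $p,p',\gamma p,\gamma p'$, then two applications of straightness give $d(p,\gamma p')=d(p,p')+d(p',\gamma p')=d(p,\gamma p)+d(\gamma p,\gamma p')$, and $\Gamma$-invariance provides $d(\gamma p,\gamma p')=d(p,p')$, whence $d(p,\gamma p)=d(p',\gamma p')$.

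\textbf{Step 2: reduction via lifts.} Let $c\colon[r,t]\to S$ be a loop freely homotopic to $s$. Choosing suitable basepoints I lift $c$ to a continuous path $\tilde{c}\colon[r,t]\to\H$ with $\tilde{c}(t)=\gamma'\tilde{c}(r)$ for some conjugate $\gamma'=\alpha\gamma\alpha^{-1}$ of $\gamma$, whose axis is $\alpha\cdot\mathrm{axis}(\gamma)$. By definition $L(c)=L(\tilde{c})$, and by Lemma~\ref{lem4.3}(1) the latter is at least $d(\tilde{c}(r),\gamma'\tilde{c}(r))$. Writing $q=\tilde{c}(r)$, it suffices to prove the displacement inequality
\[ d(q,\gamma' q)\ \geq\ d(p',\gamma' p') \]
for any $p'$ on the axis of $\gamma'$: indeed, by Step~1 applied to $\gamma'$, and by $\Gamma$-invariance (the translation number is a conjugation invariant), the right-hand side equals $d(p,\gamma p)$.

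\textbf{Step 3: the displacement inequality (main obstacle).} The subtlety is that $d_\mu$ is not assumed to be continuous, hence I cannot invoke convexity of the displacement function as in the hyperbolic setting; I work purely with straightness, $\Gamma$-invariance, and the triangle inequality through an iteration argument. Fix $q\in\H$ and any $p'$ on the axis $A$ of $\gamma'$. For every $n\geq 1$:
\begin{itemize}
\item The triangle inequality, applied $n$ times, gives $d(q,(\gamma')^n q)\leq n\,d(q,\gamma' q)$.
\item The points $p',\gamma' p',\ldots,(\gamma')^n p'$ lie in this order on the hyperbolic geodesic $A$, so straightness of $d$ yields $d(p',(\gamma')^n p')=n\,d(p',\gamma' p')$.
\item Triangle inequality and $\Gamma$-invariance of $d$ give
\[ d(p',(\gamma')^n p')\ \leq\ d(p',q)+d(q,(\gamma')^n q)+d((\gamma')^n q,(\gamma')^n p')\ =\ 2d(p',q)+d(q,(\gamma')^n q). \]
\end{itemize}
Combining these yields $n\,d(p',\gamma' p')\leq 2d(p',q)+n\,d(q,\gamma' q)$. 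Dividing by $n$ and letting $n\to\infty$ (note $d(p',q)<\infty$) gives $d(p',\gamma' p')\leq d(q,\gamma' q)$, completing Step~3 and hence the proof.
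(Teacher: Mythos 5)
Your proof is correct and takes the same approach as the paper: the paper isolates your Step~3 displacement inequality as a separate Lemma~4.5 and proves it by exactly the same iteration argument (straightness on the axis gives $d(p,\gamma^n p)=n\,d(p,\gamma p)$, the triangle inequality plus $\Gamma$-invariance gives $d(p,\gamma^n p)\le 2d(p,q)+n\,d(q,\gamma q)$, divide by $n$ and let $n\to\infty$), and then declares the proposition a direct consequence via Lemma~4.3. Your Steps~1 and~2 merely spell out the bookkeeping about lifts and conjugates that the paper leaves implicit.
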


Using the definitions of the length $L$ of a curve in $\Sf$ and Lemma~\ref{lem4.3}, 
the above proposition is a direct consequence of the following:

\begin{Lemma}\label{lem4.5}
With the hypotheses of Proposition~\ref{prop4.4} 
\bqn
d(q,\gamma q) \ge d(p, \gamma p) \;\text{ for all }q \in \H\,.
\eqn
\end{Lemma}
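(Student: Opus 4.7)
My plan is to exploit three features of the straight $\Gamma$-invariant pseudo-distance $d$: its $\Gamma$-invariance, the triangle inequality, and straightness along the axis of $\gamma$.

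The key observation is that the orbit $\{\gamma^n p\}_{n\in\Z}$ lies on the axis of $\gamma$ and is arranged in monotone order along this geodesic. Hence by straightness (applied iteratively to consecutive triples on the axis) one obtains, for every $N\geq 1$, the exact equality
\bqn
d(\gamma^{-N}p,\gamma^N p) = 2N\cdot d(p,\gamma p).
\eqn
On the other hand, for $q$ arbitrary there is no reason for the orbit $\{\gamma^n q\}$ to be collinear, so straightness gives nothing, but the triangle inequality does give the upper bound
\bqn
d(\gamma^{-N}q,\gamma^N q) \leq 2N\cdot d(q,\gamma q),
\eqn
obtained by chaining $2N$ consecutive jumps $\gamma^n q\to\gamma^{n+1}q$, each of which has $d$-length $d(q,\gamma q)$ by $\Gamma$-invariance.

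The next step is to compare the endpoints of the two segments by a single triangle inequality:
\bqn
d(\gamma^{-N}p,\gamma^N p) \leq d(\gamma^{-N}p,\gamma^{-N}q) + d(\gamma^{-N}q,\gamma^N q) + d(\gamma^N q,\gamma^N p).
\eqn
Invoking $\Gamma$-invariance, the two outer terms collapse to $d(p,q)$, and substituting the identity and the inequality from above yields
\bqn
2N\cdot d(p,\gamma p) \leq 2\, d(p,q) + 2N\cdot d(q,\gamma q).
\eqn
Dividing by $2N$ and letting $N\to\infty$ eliminates the $d(p,q)$ term and delivers $d(p,\gamma p)\leq d(q,\gamma q)$, which is the desired inequality.

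I do not anticipate a serious obstacle here: the proof is just the standard ``averaging'' trick for translation lengths, but one has to be a bit careful that the argument does not require $d$ to be continuous or to be a genuine metric — only the three axioms (symmetry, triangle inequality, and straightness on hyperbolic geodesic triples) plus $\Gamma$-equivariance are used, and each of these has been verified in Proposition~\ref{prop4.1}. The only mildly subtle point is the iterated application of straightness along the axis, which is legitimate precisely because $p,\gamma p,\gamma^2 p,\dots$ appear in order along the \emph{hyperbolic} geodesic that is the axis of $\gamma$, and straightness was defined with respect to the hyperbolic metric on $\H$.
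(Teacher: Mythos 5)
Your proof is correct and uses essentially the same argument as the paper: straightness along the axis gives the exact identity for the orbit of $p$, the triangle inequality and $\Gamma$-invariance give the upper bound via the orbit of $q$, and dividing and passing to the limit kills the bounded error term. The only cosmetic difference is that you run the orbit over the symmetric range $\gamma^{-N}p,\dots,\gamma^{N}p$ whereas the paper uses $p,\gamma p,\dots,\gamma^{n}p$; this changes nothing of substance.
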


\begin{proof}
Using that $d$ is straight, $\Gamma$-invariant, and applying the triangle inequality we obtain for all $n \ge 1$:
\begin{align*}
n d(p,\gamma p) & = \displaystyle\sum\limits^{n-1}_{i=0} d(\gamma^i p,\gamma^{i+1} p)
\\[1ex]
&= \;d(p,\gamma^n p)
\\ 
&\le  \;d(p,q) + \displaystyle\sum\limits^{n-1}_{i=0}  d(\gamma^i q, \gamma^{i+1} q) + d(\gamma^n q, \gamma^n p)
\\[1ex]
&= \; 2d(p,q) + n \,d(q,\gamma q) .
\end{align*}

\noindent
Dividing by $n$ and letting $n$ tend to infinity we obtain the lemma. 
\end{proof}

Assume now that $\Gamma$ is finitely generated.
Let $C \subset \H$ be the closed convex hull of the limit set of $\Gamma$ and $\Sigma =\Gamma \backslash C$ be the quotient surface, 
which is a complete hyperbolic surface with geodesic boundary and finite area, included in $\Sf = \Gamma \backslash \H$.
Let $\mathring\Sigma$ be the interior of $\Sigma$. Define
\begin{align*}
\Syst(L)& : = \inf\Big\{L(c):\, c \subset \mathring\Sigma\; \text{ is a closed geodesic}\Big\}
\\
\Syst_s(L)& : = \inf\Big\{L(c):\, c \subset \mathring\Sigma \; \text{ is a simple closed geodesic}\Big\}.
\end{align*}

Our objective is to show:

\begin{Proposition}\label{prop4.6}
Let $\Sigma = \Gamma \backslash C$ be a finite area surface with geodesic boundary and 
$L$ be the length function associated to a $\Gamma$-invariant straight pseudo-distance on $\H$.

\begin{enumerate}
\item
  If  $\mathring\Sigma$ is not the thrice punctured sphere, $\Syst(L) = \Syst_s(L)$.

\item
  If $\mathring\Sigma$ is the thrice punctured sphere 
  \bqn
  \Syst(L) = \min\{L(c):\, c \subset \mathring\Sigma\text{ is a closed geodesic with }i(c,c) = 1\}\,.
  \eqn
\end{enumerate}
\end{Proposition}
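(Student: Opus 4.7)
The strategy is the classical surgery-at-self-intersection argument, made manageable by the fact that the straightness of $d$ makes $L$ exactly additive under splitting a parametrized geodesic at an interior point. I would first establish a length-shortening statement: if $c \subset \mathring\Sigma$ is a closed geodesic with a self-intersection point $p$, and if we decompose $c$ as the concatenation $c = c_1 \ast c_2$ of the two loops based at $p$ into which $c$ splits, then
\[
L(c) = L(c_1) + L(c_2).
\]
This follows by lifting the parametrized arc of $c$ to a segment of its axis in $\H$, observing that $p$ has two preimages on this segment, so that the axis arc subdivides into two pieces lifting $c_1$ and $c_2$, and then invoking the straightness of $d$ combined with Lemma~\ref{lem4.3}(1). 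Combined with Proposition~\ref{prop4.4}, this yields $L(\gamma_1) + L(\gamma_2) \le L(c)$, where $\gamma_i$ denotes the closed geodesic in $\mathring\Sigma$ freely homotopic to $c_i$ whenever such a geodesic exists. In particular $\min(L(\gamma_1), L(\gamma_2)) \le L(c)$, and a standard count of transverse intersections shows $i(\gamma_i, \gamma_i) < i(c,c)$.

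The delicate topological point is to verify that, outside the thrice-punctured-sphere case, one can choose the self-intersection point $p$ so that both classes $[c_i]$ are represented by closed geodesics $\gamma_i \subset \mathring\Sigma$, i.e.\ that $[c_i]$ is neither trivial nor peripheral (homotopic to a boundary component or into a cusp). Both $[c_i]$ cannot be trivial simultaneously since $c$ is essential. When both $[c_i]$ are peripheral, inspecting the surface obtained by cutting $\mathring\Sigma$ along $c$ and computing its Euler characteristic forces $\mathring\Sigma$ to be the thrice punctured sphere with $i(c,c)=1$. Outside this exceptional case one can therefore find a self-intersection $p$ of $c$ (possibly after varying over the available self-intersections) where the surgery produces the required $\gamma_1$ and $\gamma_2$ in $\mathring\Sigma$. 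This is the content of the Length-Shortening-Under-Surgery Lemma that must be proved in full detail and amounts to a case analysis of the topological types of an essential closed curve on a complete hyperbolic surface of finite area with geodesic boundary.

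Granting this, the proof concludes by iteration. Each surgery step produces closed geodesics in $\mathring\Sigma$ of strictly smaller self-intersection number and of total length bounded by $L(c)$; since the self-intersection number is a non-negative integer, the process terminates in finitely many steps. In case~(1), the iteration terminates at simple closed geodesics in $\mathring\Sigma$, so that $\Syst_s(L) \le L(c)$ for every closed geodesic $c \subset \mathring\Sigma$, hence $\Syst_s(L) \le \Syst(L)$; the reverse inequality is trivial. In case~(2) the process must stop at curves with $i(c,c)=1$, since $\mathring\Sigma$ contains no essential non-peripheral simple closed curves, and this gives the stated formula. The main obstacle is the topological analysis in the middle paragraph, which is the genuine combinatorial core of the argument and the reason the thrice-punctured-sphere case must be singled out.
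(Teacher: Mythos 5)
Your proof has a genuine gap in the middle paragraph: the claim that, whenever $\mathring\Sigma$ is not the thrice punctured sphere, one can find a self-intersection point where \emph{both} loops $c_1,c_2$ are non-trivial and non-peripheral, is false. Take $\mathring\Sigma$ a four-times punctured sphere and $c$ a closed geodesic of self-intersection $1$ which, at its unique self-intersection point, decomposes into a loop $c_1$ around one puncture and a loop $c_2$ around a second puncture. Both $[c_1]$ and $[c_2]$ are peripheral, there is no other self-intersection to vary over, and $\mathring\Sigma$ is not the thrice punctured sphere; so your Euler-characteristic dichotomy is wrong and your iteration has nowhere to go. The fix, and the route the paper actually takes, is to also use the \emph{third} surgery curve $c_3=c_1*\bar c_2$. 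In the example above $c_3$ is a simple, non-peripheral closed curve (separating two punctures from the other two), and Lemma~\ref{lem4.9} gives $L(s_3)\le L(c)$ for its geodesic representative. More systematically, the paper's Lemma~\ref{lem4.10} replaces your ad hoc choice of intersection point by the subsurface $\Sigma_c$ filled by $c$: it shows (1) every boundary component of $\Sigma_c$ has $L$-length at most $L(c)$, and (2) one may reduce to a geodesic $c'\subset\Sigma_c$ with $i(c',c')=1$ and $L(c')\le L(c)$. Since $\mathring\Sigma_{c'}$ is a thrice punctured sphere but $\mathring\Sigma$ is not, some boundary component of $\Sigma_{c'}$ is a simple closed geodesic in $\mathring\Sigma$, and (1) finishes part~(1) of the proposition. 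Your iteration-to-simple strategy is the right spirit, but without $c_3$ (or an equivalent device such as $\partial\Sigma_c$) you cannot escape the case where both loops at every self-intersection are peripheral.

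Two smaller remarks: (i) your decomposition identity $L(c)=L(c_1)+L(c_2)$ need not hold when the self-intersection has multiplicity $>2$, nor is it what the paper uses; the paper only needs the inequality $L(s_i)\le L(c)$ from Proposition~\ref{prop4.4} and Lemma~\ref{lem4.3}(2), applied to the loop $c_i$ as a path (not a geodesic). (ii) Your parenthetical that neither $c_i$ is trivial (because $c$ is a geodesic with no backtracking) is fine; the real obstruction is peripherality, not triviality.
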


Before indicating the proof of Proposition~\ref{prop4.6} we establish the link with the systole of a current. 
Let thus $\mu$ be a geodesic current on $\Sigma$, 
and let $d_\mu$ be the invariant straight pseudo-distance on $\H$ and $L_\mu$ be the corresponding length function on curves in $\Sf = \Gamma \backslash \H$.

\begin{Lemma}\label{lem4.7}
Let $c \subset \Sf$ be a closed geodesic. Then $i(\mu,c) = L_\mu(c)$.
\end{Lemma}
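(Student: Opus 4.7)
The strategy is to compute both sides, $i(\mu,c)$ and $L_\mu(c)$, by reducing them to the $\mu$-measure of the same set of geodesics, namely those crossing a fundamental segment on a lift of $c$.

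First I would invoke Proposition~\ref{prop4.4}: if $\gamma\in\Gamma$ is a hyperbolic element representing $c$, with axis $\wt c\subset\H$, and $p\in\wt c$, then $L_\mu(c)=d_\mu(p,\gamma p)$. Next I would unwind the definition of $d_\mu$ and split
\bqn
\calG^\pitchfork_{[p,\gamma p)}=\calG^\pitchfork_{(p,\gamma p)}\sqcup\calG^\pitchfork_{\{p\}}, \qquad
\calG^\pitchfork_{(p,\gamma p]}=\calG^\pitchfork_{(p,\gamma p)}\sqcup\calG^\pitchfork_{\{\gamma p\}},
\eqn
where $\calG^\pitchfork_{\{q\}}$ denotes the set of geodesics passing through $q\in\H$. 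By $\Gamma$-invariance of $\mu$ we have $\mu(\calG^\pitchfork_{\{p\}})=\mu(\calG^\pitchfork_{\{\gamma p\}})$, so averaging yields
\bqn
d_\mu(p,\gamma p)=\mu\bigl(\calG^\pitchfork_{(p,\gamma p)}\bigr)+\mu\bigl(\calG^\pitchfork_{\{p\}}\bigr)=\mu\bigl(\calG^\pitchfork_{[p,\gamma p)}\bigr).
\eqn

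For the other side, I would compute $i(\mu,\delta_c)$ using its definition as the $(\mu\times\delta_c)$-mass of a Borel fundamental domain for $\Gamma$ on $\calDG(\H)$. Since $\Gamma$ acts transitively on the lifts of $c$ with stabilizer $\langle\gamma\rangle$, every $\Gamma$-orbit on $\calDG(\H)\cap(\calG(\H)\times\pi^{-1}(c))$ meets the slice $\calG(\H)\times\{\wt c\}$, and two elements $(g,\wt c)$, $(g',\wt c)$ lie in the same orbit iff $g'\in\langle\gamma\rangle g$. A fundamental domain for $\langle\gamma\rangle$ acting on $\{g\in\calG(\H):i(g,\wt c)=1\}$ is exactly the set of transverse geodesics crossing $\wt c$ at a point of $[p,\gamma p)$, which coincides with $\calG^\pitchfork_{[p,\gamma p)}$ (note that $\wt c$ itself satisfies $|\wt c\cap[p,\gamma p)|=\infty$, so it is automatically excluded). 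Hence
\bqn
i(\mu,c)=\mu\bigl(\calG^\pitchfork_{[p,\gamma p)}\bigr),
\eqn
and combining the two equalities gives $i(\mu,c)=L_\mu(c)$.

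The only mildly delicate point is the endpoint bookkeeping in step two, but the half-open definition of $d_\mu$ combined with $\Gamma$-invariance of $\mu$ (applied to the pair of sets $\calG^\pitchfork_{\{p\}}$ and $\calG^\pitchfork_{\{\gamma p\}}=\gamma\cdot\calG^\pitchfork_{\{p\}}$) handles this cleanly; no appeal to atomlessness results such as Lemma~\ref{lem:pencils} is needed, since the relevant symmetry is available for free.
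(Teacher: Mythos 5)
Your proof is correct and follows essentially the same route as the paper: both reduce $L_\mu(c)$ to $d_\mu(p,\gamma p)$ via Proposition~\ref{prop4.4} and then identify the $\mu$-measure of $\calG^\pitchfork_{[p,\gamma p)}$ with $i(\mu,c)$ via a fundamental domain argument. The paper is a shade slicker: rather than doing endpoint bookkeeping to reduce the average $\frac12\{\mu(\calG^\pitchfork_{[p,\gamma p)})+\mu(\calG^\pitchfork_{(p,\gamma p]})\}$ to a single term, it simply observes that \emph{both} $\calG^\pitchfork_{[p,\gamma p)}\times\{\wt c\}$ and $\calG^\pitchfork_{(p,\gamma p]}\times\{\wt c\}$ are Borel fundamental domains for $\Gamma$ on $(\calG(\H)\times\supp\delta_c)\cap\calDG(\H)$, so each already equals $i(\mu,c)$ and the average does too.

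One small slip worth flagging: the decompositions
\bqn
\calG^\pitchfork_{[p,\gamma p)}=\calG^\pitchfork_{(p,\gamma p)}\sqcup\calG^\pitchfork_{\{p\}}, \qquad
\calG^\pitchfork_{(p,\gamma p]}=\calG^\pitchfork_{(p,\gamma p)}\sqcup\calG^\pitchfork_{\{\gamma p\}}
\eqn
are not quite equalities. The axis $\wt c$ itself belongs to $\calG^\pitchfork_{\{p\}}$ (it meets $\{p\}$ in exactly one point) but not to $\calG^\pitchfork_{[p,\gamma p)}$ (it meets the segment in infinitely many points), so the right-hand sides overcount $\wt c$; the correct statement would replace $\calG^\pitchfork_{\{p\}}$ by $\calG^\pitchfork_{\{p\}}\smallsetminus\{\wt c\}$, and similarly for $\gamma p$. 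This is not a measure-zero issue in general, since $\mu$ may well charge $\{\wt c\}$ (e.g.\ if $\mu=\delta_c$). Fortunately the overcount appears symmetrically in both decompositions, so it cancels in the averaging and your stated conclusion $d_\mu(p,\gamma p)=\mu(\calG^\pitchfork_{[p,\gamma p)})$ survives, but the intermediate equalities as written are false. The paper's argument sidesteps this entirely by never decomposing.
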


\begin{proof}
Let $\gamma \in \Gamma$ be a hyperbolic element representing $c$ and let
$p$ be a point on its axis $g$. Then we have (by Proposition~\ref{prop4.4}):
\bqn
L_\mu(c)  = d_\mu (p,\gamma p)
=\frac{1}{2} \;\left\{\mu(\calG^\pitchfork_{[p,\gamma p)}\big) + \mu \big(\calG^\pitchfork_{(p, \gamma p]}\big)\right\}.
\eqn

Observe that $\calG^\pitchfork_{[p,\gamma p)}\times\{g\}$ as well as
  $\calG^\pitchfork_{(p,\gamma p]}\times\{g\}$ are Borel fundamental
domains for the $\Gamma$-action on $(\calG(\H) \times \supp \delta_c  ) \cap \Dd \calG (\H)$ and hence their $\mu$-measure equals $i(\mu,c)$ by definition.
\end{proof}

Lemma~\ref{lem4.7} and Proposition~\ref{prop4.6} now lead to the main result of this section concerning the systole of $\mu$.

Recall that 
\bqn
\Syst(\mu) = \inf\,\{i(\mu,c):\,c \text{ is a closed geodesic in }\mathring\Sigma\}
\eqn 
and 
\bqn
\Syst_s(\mu) = \inf\, \{i(\mu,c):\, c\text{ is a simple closed geodesic in }\mathring\Sigma\}\,.
\eqn 
Then

\begin{Corollary}\label{cor:SimpleSyst}
Let $\mu$ be a geodesic current on a finite area hyperbolic surface $\Sigma$ with geodesic boundary. 

\begin{enumerate}
\item
  If $\mathring\Sigma$ is not the thrice punctured sphere,
\bqn
\Syst(\mu) = \Syst_s(\mu).
\eqn

\item
  If $\mathring\Sigma$ is the thrice punctured sphere and the carrier of $\mu$ is not included in $\partial \Sigma$,
\bqn
\Syst(\mu) > 0.
\eqn
\end{enumerate}
\end{Corollary}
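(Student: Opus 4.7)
The plan is to dispatch part (1) immediately via the preceding results, and to reduce part (2) to a filling property of closed geodesics with self-intersection on the thrice-punctured sphere.

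For part (1), I would combine Lemma~\ref{lem4.7}, which identifies $i(\mu,c) = L_\mu(c)$ for every closed geodesic $c$, with Proposition~\ref{prop4.6}(1): the former gives $\Syst(\mu) = \Syst(L_\mu)$ and $\Syst_s(\mu) = \Syst_s(L_\mu)$, and the latter yields the equality of these quantities for $L_\mu$.

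For part (2), I first note that $\partial\Sigma = \varnothing$ in the thrice-punctured sphere case, so the hypothesis on the carrier reduces to $\mu \neq 0$. Combining Lemma~\ref{lem4.7} with Proposition~\ref{prop4.6}(2) gives
\[
\Syst(\mu) = \min\{i(\mu,c) : c \subset \mathring\Sigma \text{ closed geodesic with } i(c,c) = 1\},
\]
so the problem reduces to showing $i(\mu,c) > 0$ for every such $c$. I would apply Proposition~\ref{prop:3.1} to $A = \pi^{-1}(c) \subset \calG(\H)$: since $c$ has a self-intersection, $A$ contains a pair $\tilde c_1, \tilde c_2$ of transversally crossing lifts. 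The key claim is then that $A^0 = \varnothing$, namely that every geodesic of $\H$ is crossed transversely by some lift of $c$. Granting this, $\Lambda_A = A^0 \cap A^{00} = \varnothing$ and Proposition~\ref{prop:3.1}(2)(b) yields that every geodesic of $\H$ crosses some element of $A$ transversely. Picking any $g \in \supp\mu$ (non-empty since $\mu \neq 0$) and a lift $\tilde c_0$ of $c$ with $g$ crossing $\tilde c_0$ transversely, the open set $\{h \in \calG(\H) : i(h,\tilde c_0) = 1\}$ contains $g$ and hence has positive $\mu$-mass; unfolding $\Gamma$-equivariance in the definition of intersection gives $i(\mu,c) > 0$.

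The main obstacle is establishing $A^0 = \varnothing$. I plan to exploit that $\Gamma$ is a lattice in $\PSL(2,\R)$ via topological mixing of the $\Gamma$-action on $\partial\H \times \partial\H$: the $\Gamma$-orbit of the ordered quadruple of interleaving endpoint-pairs $(\partial\tilde c_1, \partial\tilde c_2)$ is dense in the corresponding configuration space, so for any geodesic $g \in \calG(\H)$ some $\Gamma$-translate of $\tilde c_1$ has endpoints interleaving those of $g$ and thus crosses $g$ transversely. This density step is the one non-trivial input; the remainder is a routine application of Proposition~\ref{prop:3.1} together with the openness of the transverse-intersection condition.
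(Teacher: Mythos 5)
Part (1) of your proposal is correct and exactly matches the paper's argument: Lemma~\ref{lem4.7} identifies $i(\mu,\cdot)$ with $L_\mu(\cdot)$ on closed geodesics, and Proposition~\ref{prop4.6}(1) gives the equality of systoles.

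Part (2) contains a genuine gap, and it occurs right at the start. You assert that $\partial\Sigma = \varnothing$ when $\mathring\Sigma$ is the thrice punctured sphere, so that the carrier hypothesis reduces to $\mu \neq 0$. This is false. The surface $\Sigma$ is a finite-area hyperbolic surface with geodesic boundary, and its interior $\mathring\Sigma$ is topologically the thrice punctured sphere also when $\Sigma$ is a compact pair of pants with one, two, or three geodesic boundary circles (the interior of a compact surface with boundary is homeomorphic to the corresponding punctured surface). The hypothesis ``$\Carr(\mu) \not\subset \partial\Sigma$'' is written precisely to exclude the currents $\mu = \sum a_i \delta_{\partial_i}$ supported on the boundary, which have $\Syst(\mu) = 0$; it is strictly stronger than $\mu \neq 0$.

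This error propagates. Your key claim that $A^0 = \varnothing$ for $A = \pi^{-1}(c)$ is also false whenever $\partial\Sigma \neq \varnothing$: the lifts of the boundary geodesics of $\Sigma$ are disjoint from every lift of $c$ (since $c \subset \mathring\Sigma$), so they lie in $A^0$. Consequently $\Lambda_A \neq \varnothing$ and Proposition~\ref{prop:3.1}(2)(b) does not give that every geodesic of $\H$ crosses $A$. The mixing-density argument you invoke to establish $A^0 = \varnothing$ is likewise unavailable in this case: when $\partial\Sigma \neq \varnothing$, the group $\Gamma$ is convex-cocompact and not a lattice, the limit set is a Cantor set rather than all of $\partial\H$, and the $\Gamma$-action on $\partial\H \times \partial\H$ is not topologically transitive on the full configuration space. (Even in the lattice case your density claim is stronger than needed and is not an immediate consequence of topological mixing of the geodesic flow, which concerns the two-point boundary, not a four-point configuration space.)

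The paper's proof is more direct and avoids all of this: the hypothesis $\Carr(\mu) \not\subset \partial\Sigma$ guarantees a geodesic $g \in \supp\mu$ with $\pi(g) \subset \mathring\Sigma$ which is not a boundary geodesic. One then observes that any such $g$ must intersect a closed geodesic $c$ with $i(c,c)=1$ transversally, because the complementary regions of such a $c$ in the pair of pants are once-punctured monogons/bigons or crowns, whose only bi-infinite geodesic is the boundary geodesic itself. Openness of transverse intersection then gives $i(\mu,c)>0$, and Proposition~\ref{prop4.6}(2) (via Lemma~\ref{lem4.7}) finishes the proof. Your plan of showing $i(\mu,c)>0$ for every self-intersecting $c$ is not wrong in spirit, but the mechanism you propose (Proposition~\ref{prop:3.1} applied to $A = \pi^{-1}(c)$ together with a mixing argument) does not establish it and breaks entirely when $\Sigma$ has geodesic boundary.
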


\begin{proof}
(1) This statement follows from Proposition~\ref{prop4.6}(1) and Lemma~\ref{lem4.7}.

\medskip
\noindent
(2) % If $\mu$ gives positive mass $\lambda$ to some closed geodesic $c_0$ of self-intersection $1$, then we have $i(\mu, c_0) \ge \lambda \,i(c_0,c_0) = \lambda > 0$ and for any other closed geodesic $c'$, $i(\mu,c') \ge \lambda i(c_0,c') \ge \lambda > 0$ which shows $\Syst(\mu) \ge \lambda > 0$.
%
%\smallskip
%If $\mu$ gives zero mass to all closed geodesics of self-intersection $1$, 
Since the carrier of $\mu$  is not contained in the boundary of $\Sigma$,  
there is a geodesic $g \subset \mathring\Sigma$ in the carrier of $\mu$. But $g$ intersects then transversally at least one closed geodesic with one self-intersection. 
The claim follows then from Proposition~\ref{prop4.6}(2).% which is not closed of self-intersection $1$; Since $\mathring\Sigma$ is a thrice punctured sphere, $g$ intersects transversally every closed geodesic of self-intersection $1$ and hence $\Syst(\mu) = \min\{i (\mu,c)$: $c \subset \mathring\Sigma, i (c,c) = 1\} > 0$.
\end{proof}

\medskip
We now indicate the main steps in the proof of Proposition~\ref{prop4.6}. 
It relies on a ``Length-Shortening-Under-Surgery'' property and standard arguments from surface topology. 
In the sequel we will use the notation $i(c,c')$ for distinct loops $c,c'$ 
as the minimum intersection number of loops in the free homotopy classes represented by $c$ and $c'$. {Instead we will denote by $i(c,c)$ the number of self-intersections of $c$.}

Let now $c \subset \Sf$ be a closed geodesic with at least one self-intersection point $p \in c$. Choose a parametrisation
$c: [0,1] \rightarrow \Sf$ and $t \in (0,1)$ such that $p = c(0) =
c(1) = c(t)$. Then $c$ is the concatenation of $c_1 = c|_{[0,t]}$ and
$c_2 = c|_{[t,1]}$; let also $\ov{c}_2$ denote the loop $c_2$
with opposite orientation and let $c_3 = c_1 * \ov{c}_2$ be the
concatenation of $c_1$ and $\ov{c}_2$.

\medskip
\begin{center}
%\begin{figure}[h]\label{fig:1}
\begin{tikzpicture}[scale=1.5]
%outer curve 
\filldraw[fill=green!20!white, draw=green!50!black, rounded corners, scale=1.3] (-2.6,0) .. controls (-2.55,1) and (-1.3,1) .. (0,0.2) .. controls (1.3,1) and (2.4,1) .. (2.6,0) .. controls (2.4,-1) and (1.5,-1) .. (0,-0.2) .. controls (-1.5,-1) and (-2.55,-1) .. (-2.6,0);
%middle curve
\draw (0,0) .. controls (1.5,1) and (3,1) .. (3,0) .. controls (3,-1) and (1.5,-1) .. (0,0) .. controls (-1.5,-1) and (-3,-1) .. (-3,0) .. controls (-3,1) and (-1.5,1) .. (0,0);
%right middle arrow
\draw (3,0) node [rotate=270] {\tiny{$<$}};
%left middle arrow
\draw (-3,0) node [rotate=270] {\tiny{$<$}};
%%p
\filldraw (0,0) circle [radius=1pt];
%gamma_2
\draw (2.7,0) node {\small{$c_1$}};
%gamma_3
\draw (-2.7,0) node {\small{$c_2$}};
%
%%%right inner curve
\filldraw[fill=white, draw=black, rounded corners, scale=.5, xshift=1.8cm] (0,0) .. controls (.1,1) and (2.5,1) .. (3,0) .. controls (2.5,-1) and (.1,-1) .. (0,0);
\filldraw[fill=white, draw=black, rounded corners, scale=.5, xshift=1.8cm] (0,0) .. controls (.1,1) and (2.5,1) .. (3,0) .. controls (2.5,-1) and (.1,-1) .. (0,0);
%%%left inner curve
\filldraw[fill=white, draw=black, rounded corners, scale=.5, xshift=-1.8cm] (0,0) .. controls (-.1,1) and (-2.5,1) .. (-3,0) .. controls (-2.5,-1) and (-.1,-1) .. (0,0);
\node at (0,0) [above] {$p$};
\end{tikzpicture}
%\caption{The three curves obtained from $c$ resolving the self intersection at $p$.}
%\end{figure}
\end{center}

Observe that 
\bqn
L(c_3) = L(c_1) + L(c_2) = L(c)  \;\mbox{and} \; i(c_i,c_i) < i(c,c), \, i \in \{1,2,3\}.
\eqn

The following is an immediate consequence of  Proposition~\ref{prop4.4}:
\begin{Lemma}\label{lem4.9}
If for $i \in \{1,2,3\}$, if $c_i$ is freely homotopic to the closed geodesic $s_i$, then $L(s_i) \le L(c)$.
\end{Lemma}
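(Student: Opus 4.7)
The proof will be an essentially immediate consequence of Proposition~\ref{prop4.4} combined with the additivity and symmetry properties of $L$. The plan is as follows.

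First, I would observe that $c_1$ and $c_2$ are each closed loops based at the self-intersection point $p$, since $c(0)=c(t)=c(1)=p$. By Lemma~\ref{lem4.3}(2), the length function $L$ is additive under concatenation, so
\bqn
L(c) \;=\; L(c_1) + L(c_2) \;\geq\; L(c_i) \quad\text{for } i\in\{1,2\},
\eqn
using that $L$ takes nonnegative values. This establishes the bound $L(c_i)\le L(c)$ for $i=1,2$.

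Next I would handle $c_3$. Since the underlying pseudo-distance $d$ on $\H$ is symmetric, traversing a path in the reverse direction does not change its length, so $L(\ov c_2)=L(c_2)$. Applying Lemma~\ref{lem4.3}(2) again to the concatenation $c_3 = c_1 * \ov c_2$ yields
\bqn
L(c_3) \;=\; L(c_1) + L(\ov c_2) \;=\; L(c_1) + L(c_2) \;=\; L(c).
\eqn
In particular $L(c_3)\le L(c)$.

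Finally, each $c_i$ is by hypothesis a closed loop freely homotopic to a closed geodesic $s_i$. By Proposition~\ref{prop4.4}, the length of any closed loop in a given free homotopy class bounds the length of the corresponding closed geodesic from above, hence $L(s_i)\le L(c_i)$. Combining this with the two bounds above gives $L(s_i)\le L(c)$ for all $i\in\{1,2,3\}$, which is the desired conclusion. There is no substantive obstacle here; the statement is a packaging of Proposition~\ref{prop4.4} with the concatenation and symmetry properties of the length induced by a straight pseudo-distance.
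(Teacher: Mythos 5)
Your proof is correct and follows essentially the same route as the paper, which already records the identity $L(c_3)=L(c_1)+L(c_2)=L(c)$ as an observation and then invokes Proposition~\ref{prop4.4} to pass from the loops $c_i$ to the geodesics $s_i$. You simply spell out the intermediate steps (additivity from Lemma~\ref{lem4.3}(2), invariance under orientation reversal from symmetry of $d$) that the paper leaves implicit.
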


Given a closed geodesic $c \subset \Sigma$ with positive self-intersection, let $\Sigma_c \subset \Sigma$ denote the subsurface filled by $c$,
that, we recall, is obtained by taking a regular tubular neighborhood of $c$ and 
adding to it all the components of the complement that are either simply connected or whose fundamental group is cyclic. % generated by a parabolic element. 
 Standard arguments in surface topology, combined with Lemma~\ref{lem4.9} then imply:

\begin{Lemma}\label{lem4.10} ~

\begin{enumerate}
\item
  If $c \subset \Sigma$ is a closed self-intersecting geodesic, we have for every connected component $c'$ of $\partial \Sigma_c$:
\bqn
L(c') \le L(c).
\eqn

\item
  Given a closed geodesic $c \subset \Sigma$ with positive self-intersection, 
  there exists a closed geodesic $c'$ in $\Sigma_c$ with $i(c', c') = 1$ and $L(c') \le L(c)$. In particular  
$\mathring\Sigma_{c'}$ is a thrice punctured sphere.
\end{enumerate}
\end{Lemma}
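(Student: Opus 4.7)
The plan is to deduce both statements from iterated application of Lemma~\ref{lem4.9}, combined with standard surface topology.

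\textbf{Proof of (1).} Starting from $c$, I repeatedly perform the splitting (type A) surgery at a self-intersection point. Each such surgery replaces a closed loop of positive self-intersection by two closed loops whose geodesic representatives have length at most that of the original (Lemma~\ref{lem4.9}) and whose total self-intersection strictly decreases. After finitely many iterations I obtain a finite collection $\mathcal{F}$ of simple closed geodesics, each of length $\le L(c)$, after discarding null-homotopic loops. The essential elements of $\mathcal{F}$ form a multicurve isotopic to $\partial \Sigma_c$: this is the standard characterization of the filling subsurface by iterated resolution of self-intersections. Every boundary component $c'$ of $\Sigma_c$ is then freely homotopic to an element of $\mathcal{F}$, hence equals it as a geodesic, giving $L(c') \le L(c)$.

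\textbf{Proof of (2).} Consider the set
\[ \mathcal{D} := \{ c^* \subset \Sigma_c : c^* \text{ closed geodesic}, \ L(c^*) \le L(c), \ i(c^*, c^*) \ge 1 \}, \]
which contains $c$. The map $c^* \mapsto i(c^*, c^*)$ takes positive integer values on $\mathcal{D}$ and hence attains a minimum $k_0 \ge 1$; choose $c^* \in \mathcal{D}$ realizing it. The claim is that $k_0 = 1$, from which part (2) follows with $c' := c^*$: the assertion that $\mathring \Sigma_{c'}$ is a thrice-punctured sphere is immediate because the regular neighborhood of a figure-eight is a pair of pants, and filling in disk or peripheral annular complementary components does not change this.

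Suppose for contradiction $k_0 \ge 2$. Pick a self-intersection point $p$ of $c^*$ and perform both the splitting surgery (yielding $c_1, c_2$) and the recombination $c_3 := c_1 * \overline{c_2}$. Lemma~\ref{lem4.9} provides geodesic representatives $s_1, s_2, s_3$ with $L(s_i) \le L(c^*) \le L(c)$, each contained in $\Sigma_{c^*} \subseteq \Sigma_c$, and each with self-intersection $< k_0$ in its free homotopy class. The crucial combinatorial point is:
\[ \text{at least one } s_i \text{ is essential, non-peripheral, and has } i(s_i, s_i) \ge 1. \]
Granting this, the corresponding $s_i$ lies in $\mathcal{D}$ with self-intersection strictly less than $k_0$, contradicting minimality.

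\textbf{The main obstacle} is the combinatorial claim just invoked. The scenario to exclude is that all three surgery products $s_1, s_2, s_3$ are simple (or inessential, or peripheral). A careful bookkeeping of how the $k_0 - 1$ unresolved self-intersections of $c^*$ distribute among $i(s_1, s_1)$, $i(s_2, s_2)$, and $i(s_1, s_2)$, combined with the estimate $i(s_3, s_3) \le i(s_1, s_1) + i(s_2, s_2) + i(s_1, s_2)$ coming from the identity $c_3 = c_1 * \overline{c_2}$, should force $k_0 \le 1$ in the scenario, contradiction. If a single resolution at $p$ is insufficient (for instance if all three $s_i$ happen to be simple even though $k_0 \ge 2$), one iterates the surgery procedure on $c_3$ until a curve of self-intersection exactly one is produced, in the style of classical Hass--Scott bigon-reduction arguments for closed curves on surfaces.
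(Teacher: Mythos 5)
Both halves of your argument have genuine gaps.

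\textbf{Part (1).} Restricting to the splitting surgery $c\mapsto\{c_1,c_2\}$ cannot reach all boundary components of $\Sigma_c$, and this already fails in the base case $i(c,c)=1$. At the unique transversal crossing $p$ of such a $c$, the four germs at $p$ occur in the cyclic order ($c_1$-start, $c_2$-start, $c_2$-end, $c_1$-end); in particular the two germs of $c_1$ are cyclically adjacent, and likewise for $c_2$, so $c_1$ and $c_2$ meet at $p$ without crossing and the ribbon graph of $c$ is planar. Hence $N(c)$ is a pair of pants whose three boundary circles are in the free homotopy classes of $c_1$, $c_2$ and $c_3=c_1*\overline{c_2}$. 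The third one is the \emph{recombination} product. Your iteration never produces it, and since $s_1,s_2$ are already simple it terminates with $\mathcal{F}=\{s_1,s_2\}$, missing the component $\sim c_3$. So the asserted ``standard characterization of the filling subsurface by iterated resolution'' via type A alone is not correct; one needs both $\{c_1,c_2\}$ and $c_3$ from Lemma~\ref{lem4.9}. (In fact the paper only invokes part (1) in the proof of Proposition~\ref{prop4.6} when $c$ is already a figure-eight, and there the correct observation is precisely that $\partial\Sigma_c$ consists of geodesics among $\{s_1,s_2,s_3\}$, after which Lemma~\ref{lem4.9} gives the bound.)

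\textbf{Part (2).} You correctly reduce the problem to the ``combinatorial claim'' that for a minimizer $c^*$ with $i(c^*,c^*)=k_0\ge 2$ some self-intersection point yields a surgery product $s_i$ that is essential and non-simple, but you do not prove it, and the sketch you offer does not close the gap. The inequality $i(s_3,s_3)\le i(s_1,s_1)+i(s_2,s_2)+i(s_1,s_2)$ is vacuous exactly in the problematic scenario: if $s_1,s_2,s_3$ are all simple the left side is $0$, and no contradiction follows from how the $k_0-1$ remaining crossings of $c^*$ distribute, because crossings of $c_1$ and $c_2$ may vanish altogether upon passing to the geodesic representatives $s_1,s_2$. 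Your fallback ``iterate the surgery on $c_3$'' also cannot help: if $s_3$ is simple there is nothing left to resolve. What is actually needed is a careful choice of crossing $p$ (for instance one cutting off an innermost embedded sub-loop of $c^*$) together with a topological argument that $s_2$ or $s_3$ is then forced to have positive self-intersection. That is exactly what the paper compresses into the phrase ``standard arguments in surface topology,'' and it is precisely the piece your proposal leaves unestablished.
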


\begin{proof}[Proof of Proposition~\ref{prop4.6}] ~

\begin{enumerate}
\item
  Let $c \subset \mathring\Sigma$ be a closed geodesic with positive self-intersection; 
  by Lemma~\ref{lem4.10}(2) there is a closed geodesic $c'$ in $\mathring\Sigma$ with $i(c',c') = 1$ and $L(c') \le L(c)$. 
  Since $\mathring\Sigma_{c'}$ is a thrice punctured sphere and $\mathring\Sigma$ is not, there is a boundary component $c''$ of 
$\Sigma_{c'}$ which is a simple closed geodesic contained in $\mathring\Sigma$; Lemma~\ref{lem4.10}(1) 
implies then that $L(c'') \le L(c') \le L(c)$ and this shows that $\Syst(L) = \Syst_s(L)$.

\medskip
\item
  Follows immediately from Lemma~\ref{lem4.10}(2).
\end{enumerate}
\end{proof}

\begin{Remark}
Lemma~\ref{lem4.9} is Proposition~4.5 in \cite{Martone_Zhang} for the case where $S = \Gamma \backslash \H$ is compact. 
We believe that the use of pseudo-distance associated to a current simplifies the arguments.
\end{Remark}

\section{Currents with positive systole}\label{sec:5}

This section is devoted to the proofs of Theorem~\ref{thm_intro:positive systole} 
and Corollary~\ref{cor_intro:MCG}.

\medskip
Let then $\Gamma < \PSL (2, \R)$ be finitely generated and torsion-free, $\Sf = \Gamma \backslash \H$ the quotient surface and 
$\Sigma = \Gamma \backslash C$ the corresponding finite area surface with geodesic boundary.

Let $\mu \in \calC(\Sigma)$ be a geodesic current on $\Sigma$ and 
recall that a geodesic $c \subset \Sigma$ is $\mu$-short 
if some (and hence any) lift $\wt{c}$ of $c$ does not intersect transversally any geodesic in the support of $\mu$. 
The main ingredients in the proof of Theorem~\ref{thm_intro:positive systole} are the results on systoles established in \S~\ref{sec:4} 
together with the following proposition, establishing the implication (1) $\Longrightarrow$ (4) of Theorem~\ref{thm_intro:positive systole}.

\begin{Proposition}\label{prop5.1}
Let $\mu$ be a geodesic current on $\Sigma$ and $c: \R \rightarrow
\Sigma$ a $\mu$-short geodesic that is recurrent in
$\mathring\Sigma$. Then
for all $\eps>0$, there exists a closed geodesic $c_\eps$
in $\mathring\Sigma$ such that $i(\mu,c_\eps)<\eps$.
%% there exists a sequence $(c_n)_{n \ge 1}$ of closed geodesics in $\mathring\Sigma$ such that $\lim_{n \rightarrow \infty} \,i(\mu,c_n) = 0$.
\end{Proposition}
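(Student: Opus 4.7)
The strategy is to combine the classical Closing Lemma with the straight pseudo-distance $d_\mu$ introduced in \S~\ref{sec:4}. The starting observation is that if $c$ is $\mu$-short then, for any lift $\tilde c\subset\H$ and any $x,y\in\tilde c$, one has $d_\mu(x,y)=0$: every geodesic belonging to $\calG^\pitchfork_{[x,y)}$ crosses $\tilde c$ transversely and is therefore excluded from $\supp(\mu)$. In particular $L_\mu$ vanishes on every compact arc of $c$. If $c$ is itself closed then $i(\mu,c)=L_\mu(c)=0$ and one takes $c_\eps=c$, so I assume from now on that $c$ is not closed.

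Recurrence in $\mathring\Sigma$ yields times $t_n<s_n$ with $t_n\to\infty$ and a vector $v\in T^1\mathring\Sigma$ such that both $\dot c(t_n)$ and $\dot c(s_n)$ tend to $v$. With appropriate choices of lifts this produces elements $\alpha_n\in\Gamma$ which map a small $T^1\H$-neighborhood of $\dot{\tilde c}(t_n)$ onto such a neighborhood of $\dot{\tilde c}(s_n)$, with the identification becoming arbitrarily precise as $n\to\infty$. The Closing Lemma then guarantees that for $n$ large $\alpha_n$ is hyperbolic, its translation length is close to $s_n-t_n$, and its axis $\tilde c_n\subset\H$ shadows $\tilde c$ between $\tilde c(t_n)$ and $\tilde c(s_n)$ within hyperbolic distance $\delta_n\to 0$. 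Let $c_n\subset\mathring\Sigma$ denote the associated closed geodesic.

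Choosing $p_n\in\tilde c_n$ close to $\tilde c(t_n)$ makes $\alpha_n p_n$ close to $\tilde c(s_n)$. Applying Lemma~\ref{lem4.7}, Proposition~\ref{prop4.4} and the triangle inequality for $d_\mu$ yields
\bqn
i(\mu,c_n)=d_\mu(p_n,\alpha_n p_n)\le d_\mu(p_n,\tilde c(t_n))+d_\mu(\tilde c(t_n),\tilde c(s_n))+d_\mu(\tilde c(s_n),\alpha_n p_n),
\eqn
in which the middle term is zero by the starting observation applied to the segment $[\tilde c(t_n),\tilde c(s_n)]\subset\tilde c$. Using the $\Gamma$-invariance of $d_\mu$, the two remaining terms measure $d_\mu$-distances between pairs of points lying in arbitrarily small hyperbolic neighborhoods of a fixed lift $\tilde q\in\tilde c$ of the recurrence point $q\in\mathring\Sigma$.

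The main technical step, which I expect to be the principal obstacle, is the pseudo-continuity statement $d_\mu(x,y)\to 0$ as $x,y\to\tilde q$. One controls $\calG^\pitchfork_{[x,y)}$ by the open set of geodesics meeting a small hyperbolic ball around $\tilde q$; this set has finite $\mu$-mass by the Radon property and decreases, as the radius tends to $0$, to $\{g\in\calG(\H):\tilde q\in g\}$, so that the required pseudo-continuity reduces to $\mu(\{g:\tilde q\in g\})=0$. This vanishing in turn follows directly from the $\mu$-shortness of $c$: any geodesic through $\tilde q\in\tilde c$ is either $\tilde c$ itself, contributing only the atomic mass $\mu(\{\tilde c\})$, or crosses $\tilde c$ transversely at $\tilde q$ and hence cannot lie in $\supp(\mu)$; and $\mu(\{\tilde c\})=0$ because the $\Gamma$-orbit of $\tilde c$ is non-discrete in $\calG(\H)$ (as $c$ is recurrent, not closed and not an ideal geodesic) and $\mu$ is Radon.
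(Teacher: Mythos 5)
Your overall strategy — combining the Closing Lemma with the pseudo-distance $d_\mu$ and using that $L_\mu$ vanishes along $\mu$-short geodesics — is exactly the one used in the paper's proof. However, the key pseudo-continuity claim you rely on, namely that $d_\mu(x,y)\to 0$ as $x,y\to\tilde q$, has a genuine gap, and you correctly flagged it as the dangerous step.

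The gap is in the assertion $\mu\bigl(\{g\in\calG(\H):\tilde q\in g\}\bigr)=0$. Your argument to establish it assumes $\tilde q\in\tilde c$, but the recurrence point $q$ is an accumulation point of $c(t_n)$ and in general lies in $\overline{c(\R)}\smallsetminus c(\R)$, not on $c$. More seriously, the relevant geodesic through $\tilde q$ is not $\tilde c$ but rather $\tilde s$, a lift of the geodesic $s$ through the limit vector $v$, and $\mu(\{\tilde s\})$ can be strictly positive. For a concrete counterexample: take $\Sigma$ closed, $\sigma$ a simple closed geodesic, $\mu=\delta_\sigma$, and $c$ a non-closed geodesic spiraling towards $\sigma$ from one side without crossing it. Then $c$ is $\mu$-short and recurrent, the limit vector $v$ is tangent to $\sigma$, so $s=\sigma$ and $\mu(\{\tilde\sigma\})>0$; the pseudo-continuity of $d_\mu$ at $\tilde q\in\tilde\sigma$ fails outright, since two points on opposite sides of $\tilde\sigma$ are at $d_\mu$-distance at least $\tfrac{1}{2}\mu(\{\tilde\sigma\})$ no matter how close they are. (Your claim that the $\Gamma$-orbit of $\tilde c$ itself is non-discrete, hence $\mu(\{\tilde c\})=0$, is correct, but it does not address the possible atom on $\tilde s$.)

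The paper handles the potential atom at $\tilde s$ differently, and this is the essential missing ingredient. It does not claim $\mu(\{\tilde s\})=0$. Instead it (a) shows by a $\sigma$-finiteness/countability argument that for all but countably many $\tilde p\in\tilde s$ one has $\mu(\calG_{\{\tilde p\}}\smallsetminus\{\tilde s\})=0$, then chooses $\eta$ so that $\mu(\calG_{\bar B(\tilde p,\eta)}\smallsetminus\{\tilde s\})<\eps$; and (b) passes to a subsequence so that the close returns $c(t_n)$ lie on the same side of $s$, guaranteeing that the short concatenating arc used in the surgery is disjoint from $\tilde s$. With the arc disjoint from $\tilde s$, its $L_\mu$-length is controlled by $\mu(\calG_{\bar B(\tilde p,\eta)}\smallsetminus\{\tilde s\})$ rather than by the full mass through the ball, and the atom on $\tilde s$ never enters the estimate. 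To repair your proof you would need to incorporate both of these devices; the rest of your outline (triangle inequality for $d_\mu$, vanishing of the middle term along $\tilde c$, the reduction via Lemma~\ref{lem4.7} and Proposition~\ref{prop4.4}) is sound.
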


The hyperbolic metric on $\H$ and $\Sf$ induces Riemannian metrics on
the respective unit tangent bundles $T^1\H$ and $T^1 S$, denoted
$d^*$, for which the projection maps are Riemannian submersions; as
usual $d_{\rm hyp}$ will denote the hyperbolic distance on $S$ and
$\H$. We denote by $g_t$ the geodesic flow action on $T^1 \H$.

We will use:

%% \medskip\noindent
%% {\bf Closing Lemma}
\begin{ClosingLemma}{\cite[4.5.15]{Eb96}}
    Given a compact set $C \subset T^1(\H)$ and $\zeta > 0$, there
  exist $T \ge 0$ and $\delta > 0$ such that if there is $t \ge T$, $v
  \in C$ and $\gamma \in \PSL(2,\R)$ with $d^* (\gamma(v),g_t(v)) <
  \delta$, then there is
  $t'\in\R$ with $|t' - t| < \zeta$
  and $v' \in T^1 \H$ with $d^*(v',v) < \zeta$
  and $\gamma(v') = g_{t'}(v')$. 
\end{ClosingLemma}

\begin{proof}[Proof of Proposition~\ref{prop5.1}]
     We may suppose that $c$ is not closed (otherwise the statement is clear).
Recall that $c$ is recurrent in $\mathring\Sigma$ if there exists a
sequence $(t_n)_{n \ge 1}$ in $\R$ with $\lim_n |t_n| = \infty$ and
$\{c(t_n): n \ge 1\}$ stays in a fixed  compact subset $K$ of
$\mathring\Sigma$. Modulo reparametrizing with opposite orientation,
we may assume that the sequence $(t_n)_{n \ge 1}$ is monotone increasing with $\lim
t_n = + \infty$. 
Let $v \in T^1_p \,\mathring\Sigma$ be an accumulation point of the
sequence $(\dot{c}(t_n))_{n \ge 1}$; enlarging $K$ we may assume $p
\in \mathring{K}$. Let $s: \R \rightarrow \Sf$ be the unit speed
geodesic with $\dot{s}(0) = v$, and $g \subset \H$ be a lift of $s$.
\begin{claim}
	For all $\epsilon>0$, and for all $\wt p\in g$ except at most countably many, there exists $\eta>0$ so that $\mu(\calG_{\ov{B}(\wt{p},\eta)}
	\backslash \{g\}) < \eps$.
\end{claim}	
\begin{proof}\phantom\qedhere
Let $t_0 > 0$ be such that $s((-t_0,t_0))
\subset \mathring{K}$. Then for any $t \in (-t_0,t_0)$, $\dot{s}(t)$
is an accumulation point of the sequence $(\dot{c}(t_n + t))_{n \ge 1}$.
Let $g \subset \H$ be a lift of $s$; we claim that the set of points
$x \in g$ such that $\mu(\calG_{\{x\}}) > \mu (\{g\})$ is at most
countable. Indeed, the family of Borel subsets $\{{\calG}_{\{x\}}
\backslash \{g\}\}_{x \in g}$ are pairwise disjoint; since $\mu$ is
$\sigma$-finite, the claim follows. Thus replacing $p = s(0)$ by
$s(t)$ for some appropriate $t \in (-t_0,t_0)$ we may assume that $p$
is the projection of a point $\wt{p} \in g$ such that $\mu
(\calG_{\{\wt{p}\}} \backslash \{g\}) = 0$.
%% Possibly decreasing $\eps$
%% we may suppose that $\eps < d_{{\rm hyp}}(K, \partial \Sigma)$.
Since $\mu (\calG_{\{\wt{p}\}} \backslash \{g\}) = 0$, we may now
choose $\eta > 0$ such that $\mu(\calG_{\ov{B}(\wt{p},\eta)}
\backslash \{g\}) < \eps$, where $\ov{B}(\wt{p},\eta) \subset \H$ is
the closed ball centered at $\wt{p}$ of radius $\eta$ for the
hyperbolic metric.
\end{proof}
Possibly decreasing $\eta$ we may in addition assume that $\eta$ is
smaller than $d_{{\rm hyp}}(p, \partial \Sigma)$ and the injectivity
radius at $p$. In particular, the projection $\pi: \H \rightarrow \Sf$
sends $\ov{B}(\wt{p},\eta)$ isometrically to $\ov{B}(p,\eta)$, the
corresponding metric ball in $\Sf$, and $\ov{B}(p,\eta)
\subset\mathring\Sigma$.
% $\ov{B}(p,\eta)\subset \mathring \Sigma$
%
%%% Same side of $s$
As $c$ is not closed, $\dot{c}(t_n)$ is never tangent to $s$, so we
may assume that $c(t_n)$ never belongs to $s$. Passing to a
subsequence we may now assume that all points $c(t_n)$ are on the same side
of $s$ in $\ov{B}(p,\eta)$.
%% namely in the same connected component of
%% $\ov{B}(p,\eta) \backslash \{s(t): |t| \le \eta\}$.

\begin{claim}
If $\alpha:[0,t_n - t_m + 1] \rightarrow\mathring\Sigma$ is a closed loop obtained by concatenation of $c|_{[t_m,t_n]}$ and the
geodesic segment joining $c(t_n)$ and $c(t_m)$ in
$\ov{B}(p,\eta)$, then
$$L_\mu({\alpha})\leq \eps$$ where $L_\mu$ is the length function corresponding to $\mu$.
\end{claim}
\begin{proof}\phantom\qedhere
We have
\bqn
L_\mu({\alpha}) = L_\mu\big({\alpha}|_{[0,t_n - t_m]}\big) + L_\mu \big({\alpha}|_{[t_n - t_m, t_n - t_m + 1]}\big).
\eqn
The first summand vanishes because $c$ is $\mu$-short; for the second
summand observe that since $c(t_n)$ and $c(t_m)$ are on the same side
of $s$ in $\ov{B}(p,\eta)$, the geodesic segment $[{\alpha}(t_n -
  t_m), {\alpha}(t_n - t_m + 1)]$ is disjoint from $c$, contained
in $\ov{B}({p}, \eta)$ and as a result
\bqn
L_\mu\big({\alpha}\big|_{[t_n - t_m, t_n - t_m + 1]}\big) \le \mu \big(\calG_{\ov{B}(\wt{p},\eta)} \backslash \{g\}\big) < \eps.
\eqn
\end{proof}
%%% apply Closing Lemma
Now let $0 < \zeta < \frac{\eta}{2}$ and let  $C$ be the compact set
consisting of unit tangent vectors based at a point of $\ov{B}
(\wt{p}, \frac{\eta}{2})$; let $T$ and $\delta$ be the corresponding
constants given by the Closing Lemma. We may assume $\delta <
\frac{\eta}{2}$ and choose $n_0 \in \N$ such that $d^*(\dot{c}(t_n),v)
< \frac{\delta}{2}\; \forall n \ge n_0$. We can pick $n > m \ge n_0$
so that $t_n - t_m \ge T$ and we have
\bq\label{eq:ClosingLemma}
d^*\big(\dot{c}(t_n), \dot{c}(t_m)\big) < \delta\,.
\eq
Let $\alpha: [0,t_n - t_m + 1] \rightarrow \mathring\Sigma$ be the
closed loop obtained by concatenation of $c|_{[t_m,t_n]}$ and the
geodesic segment joining $c(t_n)$ and $c(t_m)$ in
$\ov{B}(p,\eta)$. Let $\wt{\alpha}: [0,t_n - t_m + 1] \rightarrow \H$
be the unique lift with $d(\wt{\alpha}(0),\wt{p}) < \eta$ and let
$\gamma \in \Gamma$ be such that $\gamma \,\wt{\alpha}(0) =
\wt{\alpha}(t_n - t_m + 1)$.  Then it follows from
(\ref{eq:ClosingLemma}) that
\bqn
d^*\big(\gamma \dot{\wt{\alpha}}(0), \;\dot{\wt{\alpha}}(t_n - t_m) \big)< \delta
\eqn
and hence it follows from the Closing Lemma that
%% $v'$ with $d^*(v',\dot{\wt{\alpha}}(0)) < \nu < \frac{\eta}{2}$ and $t'$ such that $|t' - (t_n - t_m)| < \nu$ with $\gamma \,v' = g_{t'}(v')$.
$\gamma$ is hyperbolic and that its axis contains a point $\wt{p}'$
with $d_{\rm hyp}(\wt{\alpha}(0),\wt{p}') < \zeta $, in particular
$\wt{p}'\in \ov{B}(\wt{p},\eta)$.  The projection to $S$ of the axis
of $\gamma$ gives us then a closed geodesic $c_\eps$ contained in
$\mathring\Sigma$ and for which $i(\mu, c_\eps) \le L_\mu (\alpha) <\epsilon$ (see Proposition~\ref{prop4.4} and Lemma~\ref{lem4.7}).
%where $L_\mu$ is the length function corresponding to $\mu$.
%
%Finally we estimate $L_\mu(\wt{\alpha})$: we have
%\bqn
%L_\mu(\wt{\alpha}) = L_\mu\big(\wt{\alpha}|_{[0,t_n - t_m]}\big) + L_\mu \big(\wt{\alpha}_{[t_n - t_m, t_n - t_m + 1]}\big).
%\eqn
%The first summand vanishes because $c$ is $\mu$-short; for the second
%summand observe that since $c(t_n)$ and $c(t_m)$ are on the same side
%of $s$ in $\ov{B}(p,\eta)$, the geodesic segment $[\wt{\alpha}(t_n -
 % t_m), \wt{\alpha}(t_n - t_m + 1)]$ is disjoint from $g$, contained
%in $\ov{B}(\wt{p}, \eta)$ and as a result
%\bqn
%L_\mu\big(\wt{\alpha}\big|_{[t_n - t_m, t_n - t_m + 1]}\big) \le \mu \big(\calG_{\ov{B}(\wt{p},\eta)} \backslash \{g\}\big) < \eps
%\eqn
%which shows that $i(\mu,c_\eps) < \eps$ and 
This concludes the proof.
\end{proof}

\begin{proof}[Proof of Theorem~\ref{thm_intro:positive systole}]
  We will show that the contrapositions of properties (1), (2), (3), (4), denoted (1)', (2)', (3)', (4)', are equivalent.

\medskip\noindent
(1)' $\Longrightarrow$ (2)': Assume $\Syst(\mu) = 0$. Since $\mathring\Sigma$ is not the thrice punctured sphere,
we have $\Syst_s(\mu) = 0$ by Corollary~\ref{cor:SimpleSyst}(1). 
Let thus $(c_n)_{n \ge 1}$ be a sequence of simple closed geodesics in $\mathring\Sigma$ 
with $\lim_{n \rightarrow \infty} \,i(\mu, c_n) = 0$, in particular 
\bqn
\lim_{n \rightarrow \infty} \,i(\mu,\frac{c_n}{\ell_{\mathrm {hyp}}(c_n)}) = 0\,.
\eqn 
Now the sequence of currents $(\frac{\delta_{c_n}}{\ell_{\mathrm {hyp}}(c_n)})_{n \ge 1}$ is contained in $\{\nu \in \MLc(\mathring\Sigma): i (\calL,\nu) = 1\}$. 
Since the latter space is compact (see Proposition~\ref{prop:proper} and the remark preceding Theorem~\ref{thm_intro:positive systole}), 
this sequence has a accumulation point, say $\nu_0$, 
in $\MLc(\mathring\Sigma)$ for which $i(\mu, \nu_0) = 0$ and $i(\calL, \nu_0) = 1$, 
in particular $\nu_0 \not= 0$. This shows the announced implication.

\medskip\noindent
(2)' $\Longrightarrow$ (3)': clear.

\medskip\noindent
(3)' $\Longrightarrow$ (4)': let $\nu \in \calC_K(\Sigma)$, with $i(\mu,\nu) = 0$ and $\nu \not= 0$. 
Then any geodesic $g \in \supp \nu$ does not intersect transversally any geodesic of $\supp \mu$; 
such a geodesic $g$ is $\mu$-short by definition and recurrent since $\pi(g) \subset K \subset \mathring\Sigma$.

\medskip\noindent
(4)' $\Longrightarrow$ (1)': This is the content of Proposition~\ref{prop5.1}.
\end{proof}

\begin{proof}[Proof of Corollary~\ref{cor_intro:MCG}] 
  (1) If $\mathring{\Sigma}$ is the thrice punctured sphere the assertion follows from Proposition~\ref{prop4.6}(2).
  We may hence assume that $\mathring{\Sigma}$ is not the thrice punctured sphere.
  Let $(\mu_n)_{n \ge 1}$ be a convergent sequence in $\calC(\Sigma)$ with limit $\mu$.
  Since $\lim_{n\to\infty} i(\mu_n,c)=i(\mu,c)$ for every closed geodesic $c$,
then $\overline\lim_{n\to\infty}\Syst(\mu_n)\leq\Syst(\mu)$ and hence $\Syst$ is continuous if $\Syst(\mu)=0$.

Let $\Syst(\mu)>0$ and assume by contradiction that 
\bqn
\underline\lim_n\Syst(\mu_n)<\Syst(\mu)\,.
\eqn
For every $n\geq1$, it follows from Proposition~\ref{prop4.6}(1) that 
there exists a simple closed geodesic $c_n$ with 
\bqn
i(\mu_n,c_n)\leq\Syst(\mu_n)+\frac{1}{n}\,.
\eqn
If $\{\ell_\mathrm{hyp}(c_n):\,n\geq1\}$ is unbounded,
without loss of generality we may assume that $\lim_{n\to\infty}\ell_\mathrm{hyp}(c_n)=\infty$
and that the sequence $\delta_{c_n}/\ell_\mathrm{hyp}(c_n)$ 
converges to a compactly supported measured lamination $\nu\in\MLc(\mathring\Sigma)$.
But then 
\bqn
i(\mu,\nu)=\lim_{n\to\infty}\frac{i(\mu_n,\nu_n)}{\ell_\mathrm{hyp}(c_n)}=0\,,
\eqn
which, by Theorem~\ref{thm_intro:positive systole}, implies that $\Syst(\mu)=0$, a contradiction.

Hence, by passing to a subsequence, we may assume that $c_n=c$ for all $n\geq1$,
and thus
\bqn
\Syst(\mu)\leq i(\mu,c)=\lim_{n\to\infty} i(\mu_n,c_n)\leq\underline\lim_n\Syst(\mu_n)\,,
\eqn
which is a contradiction.

\medskip
\noindent
(2) Assume that the first inequality does not hold.  Then there is a sequence of closed geodesics 
$(c_n)_{n\geq}$ contained in $K$ such that 
\bqn
\lim_{n\to\infty}\frac{i(\mu,c_n)}{\ell_\mathrm{hyp}(c_n)}=0\,.
\eqn
Using that 
\bqn
\left\{\frac{\delta_{c_n}}{\ell_\mathrm{hyp}(c_n)}:\,n\geq1\right\}
\eqn
is relatively compact (see Proposition~\ref{prop:proper}(1)), let $\nu\in C_k(\Sigma)$, for $\nu\neq0$,
be an accumulation point of this sequence.  Then $i(\mu,\nu)=0$, which contradicts Theorem~\ref{thm_intro:positive systole}(3).
An analogous argument leads to the second inequality.
%(2) The fact that $\Omega$ is open follows from (1) and
%{\color{red} the second assertion follows from (3).}\marginpar{AP: I find it a little too short. It would be nice to give some more explanation or a reference, for example Labourie2008 annales ENS ?}
%We show that the set $\{\mu \in \calC(\Sigma): \Syst(\mu) = 0\}$ is closed. 
%Let $(\mu_n)_{n \ge 1}$ be a convergent sequence with limit $\mu$ and $\Syst(\mu_n) = 0, \,\forall n \ge 1$. 
%By Theorem~\ref{thm_intro:positive systole}(2) 
%there exists for every $n \ge 1$ a measured lamination $\lambda_n \in \ML_0 (\mathring\Sigma)$ with $\lambda_n \not= 0$ and $i(\mu_n, \lambda_n) = 0$. 
%We may assume that $i(\calL,\lambda_n) = 1$ and take an accumulation point $\lambda \in \MLc (\mathring\Sigma)$ of $(\lambda_n)_{n \ge 1}$. 
%Then $i(\mu,\lambda) = 0$ by continuity of $i$ and $i(\calL,\lambda) = 1$, in particular $\lambda \not= 0$. 
%%The second assertion in (1) follows from (2).

\medskip
\noindent
%(3) This follows from an analogous compactness argument as in (1), using Proposition~\ref{prop:proper}.
%This is a standard argument using the compactness of $\{\nu \in \calC_K(\Sigma): i(\calL,\nu) = 1\}$. 
(3) Let $\varphi_n:\Sigma\to\Sigma$ be a sequence of homeomorphisms fixing pointwise $\partial\Sigma$ 
and such that $\varphi_n\to\infty$ in the mapping class group of $\Sigma$.  
Since $\Omega$ is locally compact, it suffices to show that if $\tau\in\Omega$ and
$[\mu]\in\P\calC(\Sigma)$ is any accumulation point of $[\varphi_n(\tau)]$,
then $\Syst(\mu)=0$.  Let $c\subset\mathring{\Sigma}$ be a closed geodesic such that \bqn
\lim_{n\to\infty}\ell_\mathrm{hyp}(\{{\varphi_n^{-1}(c)}\})=\infty\,,
\eqn
where $\{{\varphi_n^{-1}(c)}\}$ is the closed geodesic in the free homotopy class of $\varphi_n^{-1}(c)$.
Then it follows from the first inequality in (2) that 
\bqn
\lim i(\varphi_n(\tau),c)=\lim i(\tau,\{{\varphi_n^{-1}(c)}\})=\infty\,.
\eqn
Let $(n_k)_{k\geq1}$ be a subsequence and $\lambda_k>0$ such that 
\bqn
\lim_{k\to\infty}\frac{\varphi_{n_k}(\tau)}{\lambda_k}=\mu\,.
\eqn
In particular we have that 
\bqn
\lim_{k\to\infty}\frac{i(\varphi_{n_k}(\tau),c)}{\lambda_k}=i(\mu,c)\,,
\eqn
which implies that $\lim\lambda_k=\infty$.
using the continuity of the systole map, we get that 
\bqn
\Syst(\mu)=\lim_k\frac{\Syst(\varphi_{n_k}(\tau))}{\lambda_k}=\lim_k\frac{\Syst(\tau)}{\lambda_k}=0\,.
\eqn
\end{proof}

\section{Currents with vanishing systoles and laminations}\label{sec:6}

In this section we establish Theorem~\ref{thm_intro:irred and syst 0} which characterizes geodesic currents with vanishing systole 
that occur as components in the decomposition theorem.

The main tools are Theorem~\ref{thm_intro:positive systole} and the following proposition that is of independent interest.

\begin{Proposition}\label{prop6.1}
Let $\mu \in \calC(\Sigma)$ and $\Lam \subset \Sigma$ be a geodesic lamination without isolated leaves and consisting of $\mu$-short geodesics. 
Then for any closed geodesic $c \subset \Sigma$ bounding a crown of a complementary region of $\Lam$ we have
\bqn
i(\mu,c) = 0.
\eqn
\end{Proposition}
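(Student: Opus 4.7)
The plan is to exploit the spiraling structure of the crown at $c$ to approximate $\tilde c$ in the space of geodesics by lifts of $\mu$-short leaves of $\Lam$, and then use openness of transversal intersection in $\bH$ to rule out any geodesic in $\supp(\mu)$ that transversally crosses $\tilde c$. Once this is achieved, standard considerations on fundamental domains for the cyclic subgroup stabilizing $\tilde c$ will yield $i(\mu,c)=0$.

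More precisely, I would fix a lift $\tilde c\subset \H$ of $c$ with endpoints $\xi_+,\xi_-\in\bH$ and let $\gamma\in\Gamma$ be the hyperbolic element with axis $\tilde c$ representing $c$. Since $c$ bounds a crown of a complementary region $\calR$ of $\Lam$, there is a bi-infinite leaf $\ell$ of $\Lam$ on the boundary of $\calR$ whose ends spiral onto $c$; the hypothesis that $\Lam$ has no isolated leaves is precisely what guarantees the existence of such a genuinely spiraling leaf in $\Lam$. I then pick a lift $\tilde\ell_0$ of $\ell$ having one endpoint equal to $\xi_+$, and set $\tilde\ell_n:=\gamma^{-n}\tilde\ell_0$. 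Each $\tilde\ell_n$ has $\xi_+$ as one endpoint (since $\gamma$ fixes $\xi_+$), while its other endpoint $\eta_n=\gamma^{-n}\eta_0$ tends to the repeller $\xi_-$ as $n\to+\infty$.

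Next, suppose for contradiction that some $g\in\supp(\mu)$ transversally crosses $\tilde c$. Its endpoints $\alpha,\beta\in\bH$ separate $\xi_+$ from $\xi_-$ and, by transversality, $\alpha,\beta\notin\{\xi_+,\xi_-\}$. Since $\eta_n\to\xi_-$ and $\xi_-\notin\{\alpha,\beta\}$, for $n$ sufficiently large $\alpha,\beta$ still separate $\xi_+$ from $\eta_n$ and $\eta_n\notin\{\alpha,\beta\}$; hence $g$ transversally crosses $\tilde\ell_n$. This contradicts the fact that $\ell\in\Lam$ is $\mu$-short, since by definition no geodesic of $\supp(\mu)$ may transversally cross any lift of $\ell$. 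Therefore no geodesic of $\supp(\mu)$ transversally crosses $\tilde c$, so $\mu$ assigns measure zero to the set of such geodesics. Since a Borel fundamental domain for the $\langle\gamma\rangle$-action on this set computes $i(\mu,c)$, we conclude $i(\mu,c)=0$.

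The main obstacle lies in the careful formalization of the spiraling picture: one must extract from the notion of ``bounding a crown'' (together with the no-isolated-leaves hypothesis on $\Lam$) a leaf $\ell\in\Lam$ and a lift $\tilde\ell_0$ whose $\gamma$-translates exhibit precisely the required endpoint behaviour in $\bH$, namely one shared endpoint with $\tilde c$ and the free endpoint accumulating on the other endpoint of $\tilde c$. Once this geometric input is secured, the remainder reduces to a transparent openness statement for interleaving configurations of four points on $\bH$.
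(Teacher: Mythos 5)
There is a genuine gap, and it lies in the very first geometric step: the claim that there is a leaf $\ell$ of $\Lam$ admitting a lift $\tilde\ell_0$ with one endpoint at $\xi_+$.

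Look carefully at the crown structure. In the universal cover, the crown lifts to the region between $\tilde c$ and a $\gamma$-periodic chain of boundary leaves $\tilde\ell_i=(x_i,x_{i+1})$ of $\Lamt$, where the ideal vertices $x_i$ converge to $\xi_\pm$ as $i\to\pm\infty$, but each $x_i$ is an ideal vertex of the complementary region and in general \emph{not} equal to $\xi_+$ or $\xi_-$. Indeed, in the setting where the proposition is applied in the paper (see the proof of Theorem~\ref{thm_intro:irred and syst 0}), $\Lam$ is a minimal lamination with no closed leaves, so $c\notin\Lam$; since $\Lam$ is a closed subset of $\Sigma$, no leaf of $\Lam$ accumulates on $c$, and consequently no lift of any leaf of $\Lam$ can have $\xi_+$ or $\xi_-$ as an endpoint. (If $\Lamt$ did contain a geodesic $(\xi_+,\eta)$ with $\eta\ne\xi_-$, then by $\gamma$-invariance and closedness one would get $\tilde c\in\Lamt$, i.e.\ $c\in\Lam$, contradicting the interesting case.) So the spiraling leaf your argument rests on does not exist precisely when the conclusion is not already trivial. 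You have also misattributed the role of the no-isolated-leaves hypothesis: it is not used to produce a spiraling leaf, but rather to ensure, via Lemma~\ref{lem:pencils}, that the pencils $\{x_i\}\times I$ based at the crown spikes carry no $\mu$-mass (because the spikes are conical and not endpoints of hyperbolic axes).

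Your closing idea -- approximate $\tilde c$ by a sequence of $\mu$-short geodesics and invoke that transversal intersection is open, hence $\mu$-shortness is closed in $\calG(\H)$ -- is exactly the right endgame, and it is what the paper does. But the approximating sequence must be built differently: the paper uses the chords $(x_{-n},x_n)$ across the lifted crown, and the nontrivial content is Lemma~\ref{lem6.2}, which shows by induction, using the vanishing of $\mu$ on the pencils at the spikes, that each such chord is $\mu$-short. That lemma (and hence the pencil observation and the no-isolated-leaves hypothesis) is exactly the ingredient your proposal bypasses by assuming a spiraling leaf. Once one knows that some geodesic of the form $(x_0,\xi_+)$ is $\mu$-short (which again is a consequence of Lemma~\ref{lem6.2} and closedness, not of the crown structure alone), your $\gamma$-translation trick would indeed finish the argument; but establishing that fact is the heart of the proof, and it is missing here.
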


Let $\Lamt$ be the lift $\Lam$ to a $\Gamma$-invariant geodesic lamination of $\H$ and let $\calR$ be a complementary region of $\Lamt$. 
Then $\calR$ is bounded by leaves of $\Lamt$ whose endpoints in $\partial \H$ are the vertices of $\calR$. 
We now make the following crucial observation: let $a,b,c$ be consecutive vertices of $\calR$ ordered such that $(a,b,c)$ is positively oriented; 
since $\Lam$ has no isolated leaf, the pencil $\{b\} \times I_{[a,c]}$ does not contain the axis of a hyperbolic element; 
in addition, $b$ is in the limit set of $\Gamma$ and cannot be a cusp since otherwise $\Lam$ would have an isolated leaf. 
Therefore, the hypothesis of Lemma~\ref{lem:pencils} are fulfilled and hence
\bqn
\mu(\{b\} \times I_{[a,c]}) = 0.
\eqn

\begin{Lemma}\label{lem6.2}
Let $x_0,\dots,x_n$ be a sequence of consecutive vertices of a complementary region $\calR$ labelled in such a way that $(x_0,\dots,x_n)$ is positively oriented. 
Then the geodesic $(x_0,x_n)$ is $\mu$-short.
\end{Lemma}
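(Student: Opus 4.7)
The plan is to proceed by induction on $n\geq 1$. For the base case $n=1$, the geodesic $(x_0,x_1)$ is a boundary leaf of $\calR$, hence a leaf of $\Lamt$; it projects to a leaf of $\Lam$, which is $\mu$-short by assumption, so $(x_0,x_1)$ itself is $\mu$-short.

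For the inductive step, I would assume that $(x_0,x_{n-1})$ is $\mu$-short, and observe that the boundary leaf $(x_{n-1},x_n)$ of $\calR$ is also $\mu$-short. To conclude that $(x_0,x_n)$ is $\mu$-short it is enough to prove $\mu\bigl(\calG^\pitchfork_{(x_0,x_n)}\bigr)=0$ for the open set $\calG^\pitchfork_{(x_0,x_n)}$ of geodesics meeting $(x_0,x_n)$ transversally: indeed, a $\mu$-null \emph{open} set is automatically disjoint from $\supp\mu$, which is exactly the $\mu$-shortness of $(x_0,x_n)$.

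The key step is the set-theoretic inclusion
\[\calG^\pitchfork_{(x_0,x_n)}\ \subset\ \calG^\pitchfork_{(x_0,x_{n-1})}\,\cup\,\calG^\pitchfork_{(x_{n-1},x_n)}\,\cup\, P,\]
where $P$ is the pencil of geodesics with one endpoint at $x_{n-1}$ and the other endpoint in the open arc $J\subset\partial\H$ running from $x_n$ to $x_0$ without passing through $x_{n-1}$. This reduces to a direct case analysis on the endpoint $p$ of a transversal geodesic that must lie in the arc $(x_0,x_n)$ through $x_{n-1}$: if $p\in(x_0,x_{n-1})$ the geodesic crosses $(x_0,x_{n-1})$ transversally; if $p\in(x_{n-1},x_n)$ it crosses $(x_{n-1},x_n)$ transversally; if $p=x_{n-1}$ the geodesic lies in $P$. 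The first two sets on the right have $\mu$-measure zero by the inductive hypothesis and the $\mu$-shortness of $(x_{n-1},x_n)$ respectively (each being open and disjoint from $\supp\mu$), while $P$ has $\mu$-measure zero by the crucial observation preceding the lemma applied to the three consecutive vertices $x_{n-2},x_{n-1},x_n$ of $\calR$, since $J\subset I_{[x_{n-2},x_n]}$.

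Adding the three contributions gives $\mu\bigl(\calG^\pitchfork_{(x_0,x_n)}\bigr)=0$ and closes the induction. The step I would verify most carefully is the cyclic-orientation claim that $J\subset I_{[x_{n-2},x_n]}$: placing the vertices in positive cyclic order on $\partial\H$, the arc from $x_{n-2}$ to $x_n$ avoiding $x_{n-1}$ runs backwards through $x_{n-3},\ldots,x_0$ and then continues along the far side of $\calR$ all the way to $x_n$, and this arc contains the far-side sub-arc $J$ from $x_n$ to $x_0$. Once this inclusion is in place, the rest of the argument is purely formal.
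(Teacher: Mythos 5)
Your proof is correct and takes essentially the same approach as the paper: both proceed by induction on $n$, and the key inductive step decomposes the set of geodesics crossing $(x_0,x_n)$ into geodesics crossing $(x_0,x_{n-1})$, geodesics crossing $(x_{n-1},x_n)$, and a pencil at $x_{n-1}$ which is handled by the observation preceding the lemma applied to the consecutive vertices $x_{n-2},x_{n-1},x_n$.
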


\begin{proof}
The proof proceeds by recurrence. For $n=1$ the statement holds. Let us now suppose $n \ge 2$. We have the following equalities:
\begin{align*}
I_{(x_0,x_n)} & = I_{(x_0,x_{n-1}]} \cup I_{(x_{n - 1},x_n)}
\\
I_{(x_n,x_0)} & = I_{(x_{n-1},x_0)} \cap I_{(x_n,x_{n-2})} \cap I_{(x_n,x_{n - 1})} .
\intertext{Thus}
I_{(x_0,x_n)} & \times I_{(x_n,x_0)} \subset   I_{(x_0, x_{n-1})}\times I_{(x_{n-1},x_0)}
\\
& \cup \{x_{n-1}\} \times I_{(x_{n},x_{n-2})}
\\
&\cup I_{(x_{n-1},x_n)}\times I_{(x_n,x_{n - 1})}.
\end{align*}
Using that $(x_{n-1},x_n)$ is $\mu$-short, the induction hypothesis that $(x_0,x_{n-1})$ is $\mu$-short and 
the observation preceding Lemma~\ref{lem6.2}, we get to the conclusion that $(x_0,x_{n})$ is $\mu$-short.
\end{proof}

\begin{proof}[Proof of Proposition~\ref{prop6.1}] Let $\calC \subset \Sigma$ be a crown in the complement of the lamination $\ov{\Lam}$, 
and let $\gamma \in \Gamma$ be a geodesic bounding $\calC$. 
We choose lifts to $\H^2$ in such a way that the half plane to the left of $(\gamma_+,\gamma_-)$ contains a lift $\wt{\calC}$ of the crown $\calC$.

\medskip
\begin{minipage}{.5\textwidth}
\begin{tikzpicture}[scale=.6]
%surface profile
\draw [rounded corners] 
	(1,2.5) to [out=250, in=30] (-.5,1) 	
	            to [out=190, in=40] (-3,0) 
	            to [out=230, in=130] (-3,-3)  
	            to [out=320, in=220]  (0,-3)
	            to [out=50, in= 250](1,-.5) 
	            to [out=70, in=200] (3,2);
%closed geodesic gamma	        
\draw (-.5,1) to [out=270, in =185] (1,-.5);
\draw[dotted] (1,-.5) to [out=100, in=350] (-.5,1);
%genus
\draw (-2,-2) to [out=330, in= 260] (-1,-1);
\draw (-1.8,-1.8) to [out=100, in=150] (-1.2,-1.2);
%crown
\draw[rounded corners] (1,2.5) to [out=270, in=120] (1.1,2) to [out=30, in=233] (2.8,3.75);
\draw[rounded corners] (2.8,3.75) to [out=239, in=75] (1.8,2.1) to [out=10, in=220] (3, 2.7) ;
\draw[rounded corners] (3, 2.7) to [out=230, in=60] (2.4,2) to [out=360, in=350] (3,2);
%\draw [rounded corners] (1,2.5) to [out=290, in=170] (1.2,2) to [out=30, in=260] (2.8,3.75);
%\draw [rounded corners] (2.8,3.75) to [out=270, in=280]  (1.8,2.1) to [out=20, in=30] (3.8,3.5);
%\draw [rounded corners] (3.8,3.5) -- (2.4,2) -- (3,2);
%\mathcal C
\draw (.8,.8) node {$\mathcal C$};
%gamma
\draw (-.3,-.3) node {$\gamma$};
\end{tikzpicture}
\end{minipage}
\begin{minipage}{.4\textwidth}
\vskip.2cm
\begin{tikzpicture}[scale=.8]
%circle
\draw (0,0) circle [radius=3];
%x
\filldraw (-2.82,1.02) circle [radius=1pt] node[left] {$\gamma_+$};
%y
\filldraw (2.82,1.02) circle [radius=1pt] node[right] {$\gamma_-$};
% geodesic (gamma_-,gamma_+)
\draw (-2.82,1.02) to [out=340, in=200] (2.83,1.02);
%left side of the crown
\draw[xshift=-5.5cm, yshift=1cm, domain=354:382] plot(\x:5);
%right side of the crown
\draw[xshift=5.5cm, yshift=1cm, domain=158:186] plot(\x:5);
%intersezione sinistra
\filldraw (-.87,2.87) circle [radius=1pt] node[left] {\tiny{$x_{i+k}$}};
%intersezione seconda da sinistra
\filldraw (-.5,2.96) circle [radius=1pt] node[above] {\tiny{$x_{i+k-1}$}};;
%intersezione seconda da destra
\filldraw (.7,2.92) circle [radius=1pt];
%intersezione destra
\filldraw (.87,2.87) circle [radius=1pt];
\draw (.95, 3)  node {\tiny{$x_{i}$}};
%left boundary side
\draw [xshift=-.685cm, yshift=2.9cm, domain=190:370] plot(\x:.16);
%middle boudary side
\draw [xshift=.1cm, yshift=2.9cm, domain=175:360] plot(\x:.6);
%right boundary side
\draw [xshift=.778cm, yshift=2.85cm, domain=150:350] plot(\x:.075);
	%\filldraw (1.02,2.83) circle [radius=.5pt];
%R repeated on the right.  In the order: left boudary side, point, middle boundary side, point, right boundary side, point, right side of the crown
\draw [xshift=1.02cm, yshift=2.8cm, domain=163:343] plot(\x:.16);
\filldraw (1.17,2.76) circle [radius=1pt];
\draw [xshift=1.71cm, yshift=2.47cm, domain=150:320] plot(\x:.6);
\filldraw (2.15,2.08) circle [radius=1pt];
\draw [xshift=2.18cm, yshift=2cm, domain=100:350] plot(\x:.075);
\filldraw (2.27,1.95) circle [radius=1pt] node[right] {\tiny{$x_{i-k}$}};
\draw [xshift=2.83cm, yshift=1.01cm, domain=120:198] plot(\x:1.1);
%\gamma^{-1} p
\filldraw (1.78,.7) circle [radius=1pt] node[below] {$\gamma^{-1} p$};
%p
\filldraw (.52,.48) circle [radius=1pt] node[below] {$p$};
%\gamma p
\filldraw (-.52,.48) circle [radius=1pt] node[below] {$\gamma p$};
%\widetilde{\mathcal C}
\draw (0,1.5) node {$\widetilde{\mathcal C}$};
%\gamma^{-1}\widetilde{\mathcal C}
\draw (1.2,1.5) node {$\gamma^{-1}\widetilde{\mathcal C}$};
%x_{i+2}
%\draw 
%x_{i+1}
%x_i
\end{tikzpicture}
\end{minipage}
%
%\psfrag{C}{$\calC$}
%\psfrag{g}{$\gamma$}
%\psfrag{g+}{$\gamma_+$}
%\psfrag{g-}{$\gamma_-$}
%\psfrag{xi+k}{$x_{i+k}$}
%\psfrag{xik1}{$x_{i+k-1}$}
%\psfrag{xi}{$x_i$}
%\psfrag{xi-k}{$x_{i-k}$}
%\psfrag{gp}{$\gamma p$}
%\psfrag{g1p}{$\gamma^{-1} p$}
%\psfrag{p}{$p$}
%\psfrag{Ct}{$\wt{\calC}$}
%\psfrag{g1C}{$\gamma^{-1}\wt{\calC}$}
%\begin{center}
%\includegraphics[width=12.5cm]{fig1.eps}
%\end{center}

\medskip
Then $\wt{\calC}$ has consecutive ideal sides $(x_i,x_{i+1})$, $i \in \Z$, labelled in such a way that $(x_i,x_{i+1},x_{i+2})$ is positively oriented. Now observe that
\bqn
(\gamma_-,\gamma_+) = \lim\limits_{n \rightarrow \infty} \;(x_{-n},x_n).
\eqn
By Lemma~\ref{lem6.2} $(x_{-n},x_n)$ is $\mu$-short, so that
$(\gamma_-,\gamma_+)$ is $\mu$-short since the set of $\mu$-short
geodesics is a closed subset of $\calG(\H)$.
\end{proof}

\begin{proof}[Proof of Theorem~\ref{thm_intro:irred and syst 0}]
Let $\mu_{\mathring\Sigma}$ be a geodesic current as in the statement of Theorem~\ref{thm_intro:irred and syst 0}
and in the remark preceding it. 

\medskip\noindent
(2) $\Longrightarrow$ (1): Follows from Theorem~\ref{thm_intro:positive systole} since $\mu_{\mathring\Sigma} \in \MLc(\mathring\Sigma)$ and $i(\mu,\mu_{\mathring\Sigma}) = 0$.

\medskip\noindent
(1) $\Longrightarrow$ (2): Since $\Syst(\mu) = 0$, 
Theorem~\ref{thm_intro:positive systole} implies the existence of $\nu \in \MLc (\mathring\Sigma)$ with $\nu \not= 0$ and $i(\mu,\nu) = 0$.
Let $\Lamt = \supp \nu$ be the corresponding geodesic lamination and observe that it consists of $\mu$-short geodesics. 
The projection $\Lam$ of $\Lamt$ to $\Sigma$ is a compact subset of $\mathring\Sigma$ by hypothesis. 
An isolated leaf $c$ of $\Lam$ is necessarily a closed geodesic: but $c \subset \mathring\Sigma$ and 
by assumption $i(\mu,c) > 0$, hence $c$ cannot be $\mu$-short. 
Thus $\Lam$ has no isolated leaves. Let $\Lam'$ be a minimal component of $\Lam$; 
then $\Lam'$ satisfies all the assumptions of Proposition~\ref{prop6.1} and since $i(\mu, c) > 0$ for every closed geodesic $c \subset \mathring\Sigma$ 
we deduce that a complementary region of $\Lam'$ in $\Sigma$ is either an ideal polygon, 
an ideal polygon containing one cusp, or an ideal polygon bounding a component of $\partial \Sigma$ \cite[Theorem I.4.2.8]{notesonnotes}. 
We show now how this fact implies that $\supp (\mu_{\mathring\Sigma}) = \wt{\Lam'}$ where $\wt{\Lam'}$ is the lift of $\Lam'$ to $\H$.

Let $g \in \supp (\mu_{\mathring\Sigma})$ and assume that $g$ is not a leaf of $\Lamt '$. 
Since all leaves of $\Lamt '$ are $\mu$-short, $g$ cannot intersect transversally a leaf of $\Lamt '$, 
hence it is contained in a complementary region $\wt{\calR}$ of $\Lamt '$. The specific structure of 
$\wt{\calR}$ implies that if $a,b$ are the endpoints of $g$, one of $a,b$ has to be a vertex of $\wt{\calR}$. 
If $\wt{\calR}$ corresponds to a complementary region of $\Lam'$ bounding a cusp or a crown, 
it has infinitely many vertices and if it is an ideal polygon it must have at least four vertices since $g$ is not a side of $\wt{\calR}$. 
In any case we can find a geodesic $\delta$ connecting two vertices of $\wt{\calR}$ and intersecting $g$ in one point. 
By Lemma~\ref{lem6.2}, $\delta$ is $\mu$-short and this contradicts the assumption that $g \in \supp \mu_{\mathring\Sigma}$. 
Thus $\supp (\mu_{\mathring\Sigma}) \subset \Lamt '$ and by minimality of $\Lam'$ we have equality.
\end{proof}

\section{On the Weyl chamber length  compactification}\label{sec:7}
Let $\Gamma<\PSL(2,\R)$ be a cocompact lattice and $\rho:\Gamma\to G$ a representation.
Recall that when $G=\PSL(n,\R)$, $\rho $ is Hitchin if it lies in the connected component
of $\Hom(\Gamma,\PSL(n,\R))$ containing the restriction to $\Gamma$ of the irreducible $\PSL(n,\R)$-representation of $\PSL(2,\R)$,
while if $G=\Sp(2m,\R)$ it is maximal if the restriction of $\rho$ to some (and hence any) torsion-free subgroup of finite index is maximal
(see \cite{BIW, BIW_htt} for the relevant facts concerning maximal representations).
The space $\calX(\Gamma,G)$ is then the topological space obtained by taking the quotient by $G$-conjugation 
of the set of representations that are Hitchin if $G=\PSL(n,\R)$ or maximal if $G=\Sp(2m,\R)$.

Let $\lambda:G\to\mathfrak{C}$ be the Jordan projection on a closed Weyl chamber $\mathfrak{C}$ and 
let $\calL:\calX(\Gamma,G)\to\P(\mathfrak{C}^\Gamma)$ be defined by $\calL([\rho]):=[\lambda\circ\rho]$,
where $[\rho]$ refers to the $G$-conjugacy class of $\rho$, 
while $[\lambda\circ\rho]$ is the projective class of the length function $\lambda\circ\rho:\Gamma\to\mathfrak{C}$.
The Weyl chamber length boundary of $\calX(\Gamma,G)$ is then defined by
\bq\label{eq:ThP}
\partial\calX(\Gamma,G):=\bigcap_K\overline{\calL(\calX(\Gamma,G)\smallsetminus K)}\,,
\eq
where the intersection is over all compact subsets
$K\subset\calX(\Gamma,G)$ (see \cite{Parreau12}).
For our purposes we make the following choices of Weyl chamber
and describe the corresponding Jordan projection as well as the specific norm we use:
\be
\item If $G=\PSL(n,\R)$, 
\bqn
\mathfrak{C}=\{(x_1,\dots,x_n)\in\R^n:\,x_1\geq\dots\geq x_n\text{ and }\,x_1+\dots+ x_n=0\}
\eqn
and 
\bq\label{eq:wcvtv}
\lambda(g)=(\ln|a_1|,\dotsc,\ln|a_n|),
\eq
where $a_1,\dotsc,a_n$ are the eigenvalues of $g$ counted with multiplicity.
Then for $(x_1,\dots,x_n)\in\mathfrak{C}$, 
\bqn
\|(x_1,\dots,x_n)\|:=x_1-x_n\,.
\eqn
\item If $G=\Sp(2m,\R)$, 
\bqn
\mathfrak{C}=\{(x_1,\dots,x_m)\in\R^m:\,x_1\geq\dots\geq x_m\geq0\}
\eqn
and $\lambda(g)$ is defined as in \eqref{eq:wcvtv}, 
%\bqn
%\nu(g)=(\ln|\lambda_1|,\dotsc,ln|\lambda_n|)
%\eqn
where here however $a_1,\dotsc,a_m$ are the eigenvalues of $g$ of absolute value $\geq1$.
If $(x_1,\dots,x_m)\in\mathfrak{C}$, then
\bqn
\|(x_1,\dots,x_m)\|:=\sum_{i=1}^mx_i.
\eqn
\ee
We will make crucial use of the results in \cite{Martone_Zhang} that establish a relation between length functions and geodesic currents.
In fact, fix $\Gamma_0\vartriangleleft\Gamma$ a torsion-free normal subgroup of finite index, and $S=\Gamma_0\backslash\H$.
Then $\Gamma$ acts on the space $\calC(S)$ of currents on $S$;
the action factors via the finite group $\Gamma/\Gamma_0$ and the space of $\Gamma$-invariant currents $\calC(S)^\Gamma$ is a closed subset of $\calC(S)$.
The following is a direct consequence of \cite{Martone_Zhang}:

\begin{Corollary}\label{cor:7.1}  For every $[\rho]\in\calX(\Gamma,G)$ there is a unique current $\mu_\rho\in\calC(S)^\Gamma$
such that for every $\gamma\in\Gamma_0$
\bqn
i(\mu_\rho,c)=\|\lambda(\rho(\gamma))\|\,,
\eqn
where $c\subset S$ is the closed geodesic corresponding to $\gamma$.
\end{Corollary}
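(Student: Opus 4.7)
The plan is to deduce the statement from the Martone--Zhang correspondence applied on the finite-index torsion-free surface subgroup $\Gamma_0$, and then to upgrade the resulting $\Gamma_0$-invariant current to a $\Gamma$-invariant one by a uniqueness argument.

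First I would observe that $S=\Gamma_0\backslash\H$ is a closed hyperbolic surface and that the restriction $\rho|_{\Gamma_0}$ is again Hitchin or maximal: this is immediate from the definition of maximality (which already goes through a torsion-free finite-index subgroup), and follows from connectedness of the Hitchin component in the $\PSL(n,\R)$ case. Applying \cite{Martone_Zhang} to $\rho|_{\Gamma_0}$ then produces a geodesic current $\mu_\rho\in\calC(S)$ whose intersection function on closed geodesics realizes the length function of $\rho|_{\Gamma_0}$, that is $i(\mu_\rho,c)=\|\lambda(\rho(\gamma))\|$ for every hyperbolic $\gamma\in\Gamma_0$ and its associated closed geodesic $c\subset S$, where $\|\cdot\|$ is the norm on $\mathfrak{C}$ fixed in the paper.

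Next, uniqueness. Since $S$ is compact, the Dirac currents $\{\delta_c:c\text{ closed geodesic in }S\}$ span a weak-$*$ dense subspace of $\calC(S)$ (Bonahon), so a geodesic current on $S$ is determined by the collection of its intersection numbers with all closed geodesics. Hence $\mu_\rho$ is the unique current satisfying the prescribed length identities.

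For $\Gamma$-invariance, note that because $\Gamma_0\vartriangleleft\Gamma$ every $g\in\Gamma$ descends to an isometry of $S$, inducing an action on $\calC(S)$ that factors through $\Gamma/\Gamma_0$. For any hyperbolic $\gamma\in\Gamma_0$ with associated closed geodesic $c$, the translate $g\cdot c$ is the closed geodesic associated to $g\gamma g^{-1}\in\Gamma_0$, so
\[
i(g\cdot\mu_\rho,\,c)=i(\mu_\rho,\,g^{-1}\!\cdot c)=\|\lambda(\rho(g^{-1}\gamma g))\|=\|\lambda(\rho(\gamma))\|=i(\mu_\rho,c),
\]
using that the Jordan projection is a conjugation invariant of $G$. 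By uniqueness, $g\cdot\mu_\rho=\mu_\rho$, so $\mu_\rho\in\calC(S)^\Gamma$.

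I expect the only real obstacle to be a bookkeeping one in the first step: matching the precise normalization of the length function produced in \cite{Martone_Zhang} with the specific Weyl chamber norms fixed here ($x_1-x_n$ in the Hitchin case and $\sum x_i$ in the symplectic case), and making sure the Martone--Zhang construction, typically presented in the Hitchin setting, is in the form needed for maximal $\Sp(2m,\R)$-representations. Once those conventions are aligned, existence, uniqueness, and $\Gamma$-invariance follow as above.
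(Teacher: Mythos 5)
Your proof is essentially correct but takes a genuinely different route from the paper, and there is one step that deserves a more careful citation.

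\textbf{Comparison with the paper's approach.} The paper works at the level of boundary maps: it invokes the uniqueness of the $\rho|_{\Gamma_0}$-equivariant Frenet curve $\xi\colon\partial\H\to\calF(\R^n)$ (resp.\ the Lagrangian limit map in the maximal case), observes that for any $\eta\in\Gamma$ the map $x\mapsto\rho(\eta)^{-1}\xi(\eta x)$ is again $\rho|_{\Gamma_0}$-equivariant Frenet and therefore equals $\xi$, hence $\xi$ is $\Gamma$-equivariant, and then reads off the $\Gamma$-invariance of the current from the explicit formula expressing $\mu$ on rectangles via cross-ratios of $\xi$. Your proof bypasses the boundary map entirely and instead argues at the level of intersection numbers, using that the Jordan projection is a conjugation invariant of $G$ and that a current on a compact surface is determined by its intersection spectrum. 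This is a cleaner and more robust argument in that it only uses the bare existence statement from \cite{Martone_Zhang} plus an abstract uniqueness theorem, without touching the geometry of the construction; the tradeoff is that you do not obtain the (slightly stronger) statement that the Frenet/Lagrangian curve itself is $\Gamma$-equivariant, which the paper's argument gives for free.

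\textbf{The one weak spot.} Your uniqueness argument — that Dirac currents span a weak-$*$ dense subspace so a current is determined by its intersections with closed geodesics — does not quite work as phrased. Density of $\{\delta_c\}$ in $\calC(S)$ together with continuity of the intersection form shows that $i(\mu_1,\nu)=i(\mu_2,\nu)$ for all $\nu$, but this does not by itself yield $\mu_1=\mu_2$ (the intersection form is not an inner product, and $\mu_1-\mu_2$ is not a current). The fact you actually need is the injectivity of the map $\mu\mapsto(i(\mu,c))_c$ on currents over a compact surface. This is precisely Otal's theorem, which the paper itself invokes a few lines later when it says the map $I\colon\P(\calC(S))\to\P(\R^{\Gamma_0}_{\geq0})$ is a homeomorphism onto its image. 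Replacing the density claim with a citation to Otal's injectivity result closes the gap, after which the rest of your argument — the identification $g^{-1}\cdot c\leftrightarrow g^{-1}\gamma g$, the conjugation invariance of $\lambda$, and the final appeal to uniqueness — goes through exactly as you wrote it.
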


\begin{proof}  Assume $\rho\colon\Gamma\to\PSL(n,\R)$ is Hitchin.  Then $\rho|_{\Gamma_0}\colon\Gamma_0\to\PSL(n,\R)$
is Hitchin as well and there exists a unique $\rho|_{\Gamma_0}$-invariant Frenet curve
$\xi\colon\partial\H\to\calF(\R^n)$ into the variety of full flags
(see \cite[Definition~3.2]{Martone_Zhang} and \cite[Theorem~4.1]{Lab06}).
But then, given any $\eta\in\Gamma$, the assignment $x\mapsto\rho(\eta)^{-1}\xi(\eta x)$
is $\rho|_{\Gamma_0}$-equivariant Frenet as well, and hence coincides with $\xi$.  
The current $\mu$ associated to $\rho|_{\Gamma_0}$ is uniquely determined by its value on rectangles,
that is whenever $(x,y,z,w)$ is a positive $4$-tuple in $\partial\H$, then
\bqn
\mu(I_{[x,y]}\times I_{[z,w]})
=\frac12\{\ln[\xi(x),\xi(y),\xi(z),\xi(w)]+\ln[\xi(z),\xi(w),\xi(x),\xi(y)]\}\,,
\eqn
where $[\,\cdot\,,\,\cdot\,,\,\cdot\,,\,\cdot\,]$ is a specific $\PGL(n,\R)$-invariant of $4$-tuples of complete pairwise
transverse flags (see \cite[Lemma~3.6]{Martone_Zhang}).
This, together with the $\Gamma$-equivariance of $\xi$, implies that $\mu$ is $\Gamma$-invariant.

The argument for maximal representations is completely analogous by using the continuous
$\rho|_{\Gamma_0}$-equivariant map $\xi:\partial\H\to\calL(\R^{2n})$
into the space of Lagrangians that sends positive triples to positive triples.
One concludes by uniqueness that $\xi$ is $\Gamma$-equivariant, 
which implies that the current $\mu$ associated to $\rho|_{\Gamma_0}$
(see \cite[\S~3.2]{Martone_Zhang}) is $\Gamma$-invariant.
\end{proof}
%\begin{proof}  If $\mu$ is the current associated to $\rho|_{\Gamma_0}$ as in \cite{Martone_Zhang},
%then it verifies the identities in Corollary~\ref{cor:7.1} and hence it is uniquely determined.
%The $\Gamma$-invariance  of $\mu$ follows then from the construction of $\mu$ in \cite{Martone_Zhang}
%and the fact that for $G=\SL(n,\R)$ the Frenet curve associated to $\rho|_{\Gamma_0}$ is unique 
%and hence $\rho$-equivariant, while for $G=\Sp(2m,\R)$ a similar uniqueness argument holds.
%\end{proof}

\begin{proof}[Proof of Corollary~\ref{cor_intro:pd}]  Observe that any 
$[L]\in\overline{\calL(\calX(\Gamma,G))}\subset\P(\mathfrak{C}^\Gamma)$
is homogeneous, namely $L(\gamma^k)=kL(\gamma)$ for $k\in\N$.
As a result, $L|_{\Gamma_0}$ does not vanish identically and the map
\bqn
\ba
\calR:\overline{\calL(\calX(\Gamma,G))}&\to\P(\R_{\geq0}^{\Gamma_0})\\
[L]\qquad&\mapsto\,\,[L|_{\Gamma_0}]
\ea
\eqn
is well defined and continuous.

The map $I:\P(\calC(S))\to\P(\R_{\geq0}^{\Gamma_0})$ that to a projectivized current associates
its projectivized intersection function is a homeomorphism onto its image \cite{Otal},
and by \cite[Theorem~1.1, Theorem~3.4 and Corollary~3.11]{Martone_Zhang}, 
its image contains $\calR(\calL(\calX(\Gamma,G)))$ and hence $\calR(\overline{\calL(\calX(\Gamma,G))})$.
Thus 
\bqn
\Omega(\Gamma,G)=\calR^{-1}(I(\Omega(S)))\cap\partial\calX(\Gamma,G)
\eqn
and the assertions of Corollary~\ref{cor_intro:pd} follow from the corresponding ones in Corollary~\ref{cor_intro:MCG}.
\end{proof}

Next we show how Corollary~\ref{cor:split} can be deduced from Theorem~\ref{thm_intro:triangle} 
and \cite[Theorem~B]{ALS18}.  If $\Delta=\Delta(3,3,4)$ and $G=\PSL(n,\R)$ with $n\geq3$
or $G=\Sp(2m,\R)$ with $m\geq3$, then $\calX(\Delta,G)$ is a positive dimensional cell,
in particular $\partial\calX(\Delta,G)\neq\varnothing$.
Since $\Delta$ contains a torsion-free subgroup of index $24$ representing a genus $2$ surface,
any $\Gamma$ as in Corollary~\ref{cor:split} is isomorphic to a torsion-free subgroup of $\Delta$
of finite index and Theorem~\ref{thm_intro:triangle}  implies that $\Omega(\Gamma,G)\neq\varnothing$.

In the case of $\Sp(4,\R)$ one can take $\Delta=\Delta(3,4,4)$; %\marginpar{BP: I don't understand: by definition $\calX(\Delta,G)$ should be the union of all maximal components, but that is disconnected? Do we mean contains? {\color{red} $\calX(\Delta,\mathrm{Sp}(4,\R)$ is non-compact as it contains the $\mathrm{Sp}(4,\R)$ Hitchin component that is homeomorphic to $\R^2$ by \cite[Thm B]{ALS18}}}
then $\calX(\Delta,\mathrm{Sp}(4,\R)$ is non-compact 
as it contains the $\mathrm{Sp}(4,\R)$ Hitchin component that is homeomorphic to $\R^2$ by \cite[Thm B]{ALS18}, 
$\Delta$ contains a torsion-free subgroup of index $12$ representing a genus $2$ surface
and the same argument as above allows us to conclude.
Theorem~\ref{thm_intro:triangle} is in turn an immediate consequence of Corollary~\ref{cor:7.1}
and the following:

\begin{Theorem}\label{thm:7.2}  Let $\Delta<\PSL(2,\R)$ be a hyperbolic triangle group,
$\Gamma<\Delta$ a torsion-free subgroup of finite index and $S=\Gamma\backslash\H$.
Then for any non-vanishing current $\mu\in\calC(S)^\Delta$,
\bqn
\Syst(\mu)>0\,.
\eqn
\end{Theorem}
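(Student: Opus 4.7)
The plan is to suppose for contradiction that there exists a non-zero current $\mu\in\calC(S)^\Delta$ with $\Syst(\mu)=0$, and to derive a contradiction by combining the decomposition results of \S\ref{sec:3} with the characterization of vanishing systole in Theorem~\ref{thm_intro:irred and syst 0} and the topological rigidity of the triangle orbifold $O:=\Delta\backslash\H$. The key geometric input is that $O$ is a $2$-sphere with three cone points and hence admits no essential simple closed curve: any simple closed curve on $S^2$ bounds a disk containing at most one cone point, and is therefore null-homotopic or peripheral in the orbifold sense.

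First I would show that $\calE_\mu = \varnothing$. Since $\mu$ is $\Delta$-invariant, so is $\calE_\mu$, which is therefore a $\Delta/\Gamma$-invariant collection of pairwise disjoint simple closed geodesics on $S$. Lifting to $\H$, the full $\Delta$-orbit of any leaf is a $\Delta$-invariant collection of pairwise non-crossing geodesics. Consequently, any component $c\subset\calE_\mu$ projects to a closed geodesic $\bar c$ on $O$ whose lifts to $\H$ pairwise either coincide or are disjoint, i.e., $\bar c$ is embedded on $O$. Since $\bar c$ is essential (its class corresponds to an infinite-order element of $\Delta$), this contradicts the non-existence of essential simple closed curves on $O$, forcing $\calE_\mu=\varnothing$.

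Next, with $\calE_\mu=\varnothing$ the interior $\mathring{\Sigma}=S$ is the unique complementary region; since $\mu_S=\mu\neq 0$, Theorem~\ref{thm:1}(1) yields $i(\mu,c)>0$ for every closed geodesic $c\subset S$. Theorem~\ref{thm_intro:irred and syst 0} now applies, and under the assumption $\Syst(\mu)=0$ its implication $(1)\Rightarrow(2)$, combined with $\mu=\mu_{\mathring\Sigma}$, shows that $\mu$ itself is a non-zero, compactly supported, minimal, surface-filling measured lamination on $S$.

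Finally, this measured lamination $\mu$ is $\Delta$-invariant, hence its support and transverse measure descend to a non-zero measured lamination on $O$. But $O$ is rigid (its Teichm\"uller space is a point), and correspondingly $\ML(O)$ is trivial: by the orbifold analogue of Thurston's density theorem, every projective measured lamination is a limit of weighted essential simple closed multicurves, and since $O$ has none, $\ML(O)=\{0\}$. This contradiction completes the proof. The main obstacle is the careful verification of the descent of (measured) laminations from $S$ to $O$, together with the orbifold version of the density theorem and the passage between multicurves on $S$ and on $O$; these are standard consequences of orbifold covering theory but warrant explicit attention, particularly for leaves passing near or through cone points.
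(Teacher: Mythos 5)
Your overall strategy coincides with the paper's: first establish $\calE_\mu = \varnothing$, then apply Theorem~\ref{thm:1}(1) to get $i(\mu,c)>0$ for every closed geodesic $c$, then invoke Theorem~\ref{thm_intro:irred and syst 0} to conclude that $\Syst(\mu)=0$ would force $\mu$ to be a $\Delta$-invariant measured lamination, and finally rule out the existence of such a lamination. Where you differ is in the last (and crucial) ingredient: the non-existence of a non-zero $\Delta$-invariant measured lamination. You derive this from orbifold topology, arguing that the triangle orbifold $O = \Delta\backslash\H$ carries no essential simple closed curve and hence, via the orbifold version of Thurston's density theorem, $\ML(O)=\{0\}$; and you also use the same topological fact (no essential scc on $O$) to get $\calE_\mu=\varnothing$ directly. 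The paper instead proves both steps via a single, more elementary $\R$-tree fixed-point argument: a $\Delta$-invariant measured lamination $\tau$ gives a $\Delta$-action on the complete dual $\R$-tree $\calT(\tau)$; since $a$, $b$, $ab$ all have finite order, they each fix points, and by a standard completeness argument $\calT(\tau)^a\cap\calT(\tau)^b\neq\varnothing$, so $\Delta$ has a global fixed point, contradicting the fact that some $\gamma\in\Gamma$ translates along an axis by $i(\tau,c)>0$. Applying this twice (once to $\sum_{c\in\calE_\mu}\delta_c$, once to $\mu$ itself under the hypothesis $\Syst(\mu)=0$) gives both steps at once.

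Both routes are workable, but the paper's $\R$-tree argument has two advantages worth noting. First, it is self-contained: it uses only Serre's lemma on fixed points in trees and the standard fact that a measured lamination is dual to an $\R$-tree, whereas your argument imports the orbifold Teichm\"uller-theoretic fact that weighted simple closed multicurves are dense in $\ML(O)$. Second, it sidesteps the cone-point subtleties you flag at the end: a closed geodesic in $\calE_\mu$ can be preserved by an order-two elliptic of $\Delta$ acting as a $\pi$-rotation about a point on it (for instance $(ab)^2$ in $\Delta(3,3,4)$), in which case its image in $O$ is an embedded \emph{arc} joining cone points of order two, not a simple closed curve, and the step ``$\bar c$ is an essential embedded scc on $O$'' requires a separate argument for such configurations. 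The $\R$-tree argument treats all laminations uniformly, with no case analysis at the orbifold locus.
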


\begin{proof}  First we show that if $\tau\in\calC(S)^\Delta$ with $\tau\neq0$,
then as current on $S$ it cannot be a measured lamination.  
Otherwise let $\calT(\tau)$ be the $\R$-tree dual to $\tau$.
Since $\tau$ is $\Delta$-invariant, $\Delta$ acts by isometries on the complete $\R$-tree $\calT(\tau)$.
The subset $\calT(\tau)^a$ and $\calT(\tau)^b$ of $a$-fixed, respectively $b$-fixed, points are not empty.
Since $\calT(\tau)$ is complete and $ab$ has a fixed point in $\calT(\tau)$,
this implies that $\calT(\tau)^a\cap\calT(\tau)^b\neq\varnothing$
and hence $\Delta$ has a fixed point in $\calT(\tau)$.  

On the other hand pick $\gamma\in\Gamma$ representing a closed geodesic $c$ in $S$ with $i(\tau,c)>0$.
Then in the tree $\calT(\tau)$ the element $\gamma$ acts as a translation of length $i(\tau,c)$ along its axis.
This is in contradiction with the fact that $\gamma$ has a fixed point.

Now let $\mu\in\calC(S)^\Delta$, $\mu\neq0$.  
Then the set $\calE_\mu$ of closed $\mu$-short solitary geodesics is $\Delta$-invariant,
and hence $\tau=\sum_{c\in\calE_\mu}\delta_c$ is a $\Delta$-invariant measured lamination on $S$,
which implies that $\calE_\mu=\varnothing$.  
Since $\mu\neq0$, Theorem~\ref{thm:1}(1) implies that $i(\mu,c)>0$ for every closed geodesic $c\subset S$.
If now $\Syst(\mu)=0$, Theorem~\ref{thm_intro:irred and syst 0}   implies that $\mu$ is a measured lamination,
which is impossible by the preceding discussion.  
Hence $\Syst(\mu)>0$, which concludes the proof.
\end{proof}

\begin{proof}[Proof of Corollary~\ref{cor_intro:1.10}]
  Let $([\rho_k])_k$ be a sequence in $\calX(\Gamma,G)$
  converging to a point $[L]$ of  $\Omega(\Gamma,G)$.
  Let $\mu_k\in\calC(S)$ be the current
with 
\bqn
i(\mu_k,c)=\|\lambda(\rho_k(\gamma))\|\,,
\eqn 
where $\gamma\in\Gamma$ is any hyperbolic element  
with corresponding closed geodesic $c\subset S$.
By \cite[Corollary~1.5]{Martone_Zhang}, we have that
\bqn
h(\rho_k)\Syst(\mu_k)\leq C\,,
\eqn
where $C$ is a constant only depending on $S$.
We will now show that
\bqn
\lim_{k\to\infty}\Syst(\mu_k)=\infty\,,
\eqn
which will imply the corollary.
%
%% New version (Anne 6/2/19)
Let  $b$ be the geodesic current associated to a binding multicurve in $S$.
Then, by compactness of
$\{\mu\in\calC(S):\ i(\mu, b)=1\}$,
% the subset of $\calC(\Sigma)$ consisting of the
% currents $\mu$ such that $i(\mu,b)=1$,
up to extracting we have that $\frac{1}{i(\mu_k,b)}\mu_k$ converges to some non
zero current $\mu$.  Then by continuity of $i$ we have
$$\lim_{k\to\infty}\frac{1}{i(\mu_k,b)}\|\lambda(\rho_k(\gamma))\|=i(\mu,c)$$
for all hyperbolic $\gamma\in\Gamma$   
with corresponding closed geodesic $c\subset S$.
As $\mu\neq 0$ this implies that  $\lim_{k\to\infty}i(\mu_k,b)=+\infty$
and $\|L(\gamma)\|=i(\mu,c)$ for all $\gamma$ for some representant
$L$ of $[L]$. In particular we have $\Syst(\mu)=\Syst(L)$. By continuity of the
systole  (Corollary~\ref{cor_intro:MCG}(1)) we have that
$$\lim_{k\to\infty} \frac{1}{i(\mu_k,b)}\Syst(\mu_k)=\Syst(\mu)$$
As $\Syst(\mu)=\Syst(L)>0$ and $\lim_{k\to\infty}i(\mu_k,b)=+\infty$,
this implies that $\lim_{k\to\infty}\Syst(\mu_k)=+\infty$.
%%% Old version (before 6 feb 2019) :
% Let $F\subset\Gamma$ be a finite subset satisfying the conclusions of \cite[Corollary~5.7]{Parreau12},
% that is
% \bqn
% \lim_{k\to\infty}[\rho_k]=x\in\partial\calX(\Gamma,G)
% \eqn
% if and only if, for some representative $L$ of $x$  and for all $\gamma\in\Gamma$,
% \bqn
% \lim_{k\to\infty}\frac{\|\nu(\rho_k(\gamma))\|}{\sum_{\eta\in F}\|\nu(\rho_k(\eta))\|}=L(\gamma)
% \eqn
% and
% \bqn
% \lim_{k\to\infty}\sum_{\eta\in F}\|\nu(\rho_k(\eta))\|=\infty\,.
% \eqn
% We may assume that $F$ is large enough so that if $\tau$ is the current, obtained as
% sum of Dirac masses on the closed geodesics corresponding to elements of $F$,
% we have $i(\tau,\mu)>0$ for every non-vanishing current $\mu$.  Thus
% \bqn
% \left\{\frac{\mu_k}{i(\mu_k,\tau)}\in\calC(S):\,k\geq1\right\}
% \eqn
% is relatively compact in $\calC(S)$
% and any accumulation point $\mu$ will satisfy $i(\mu,c)=L(\gamma)$ for all $\gamma\in\Gamma$.
% By Otal's theorem, \cite{Otal}, $\mu$ is uniquely determined by $L$ and hence
% \bqn
% \lim_{k\to\infty}\frac{\mu_k}{i(\mu_k,\tau)}=\mu\,.
% \eqn
% By Corollary~\ref{cor_intro:MCG}(1) we deduce that
% \bqn
% \lim_{k\to\infty}\frac{\Syst(\mu_k)}{i(\mu_k,\tau)}=\Syst(\mu)
% \eqn
% and since by assumption $[\rho_k]$ converges to a point in $\Omega(\Gamma,G)$,
% we have that $\Syst(\mu)>0$.  
% This, together with 
% \bqn
% \lim_{k\to\infty}i(\mu_k,\tau)=\infty
% \eqn
% implies that $\lim_{k\to\infty}\Syst(\mu_k)=\infty$.
\end{proof}
Turning to Corollary~\ref{cor_intro:triangle}, 
we define now the Jordan projection 
\bqn
\lambda:\SL(n,\K)\to\mathfrak{C}
\eqn 
for $\K=\R[[X^{-1}]]$.  Let 
\bqn
\overline\K:=\bigcup_{\stackrel{q\geq1}{q\in\N}}\C[[X^{-1/q}]]
\eqn
be an algebraic closure of $\K$.  The valuation $v$ defined on $\K$ extends uniquely to $\overline\K$ by 
\bqn
v(\lambda)=-\frac{\ell}{q}
\eqn 
if 
\bqn
\lambda=\sum_{\stackrel{k\leq\ell}{k\in\Z}} a_kX^{k/q}
\eqn
for $a_\ell\neq0$.
Given $g\in\SL(n,\K)$, we order the eigenvalues $a_1,\dots,a_n$ of $g$ so that
\bqn
\lambda(g):=(-v(a_1),\dotsc,-v(a_n))\in\mathfrak{C}\,.
\eqn
Let $\|\,\cdot\,\|_2$ be the Euclidean norm restricted to $\mathfrak{C}$.  We have:

\begin{Lemma}\label{lem:7.3}  Let $\ell_{\calB_n(\K)}(g)$ denote the translation length of $g\in \SL(n,\K)$
computed with respect to the CAT(0)-metric in $\calB_n(\K)$.
Then
\bqn
\ell_{\calB_n(\K)}(g)=\|\lambda(g)\|_2\,.
\eqn
\end{Lemma}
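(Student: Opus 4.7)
The plan is to reduce, via Jordan decomposition and extension of scalars, to the computation of the displacement of a diagonal element on an apartment.

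First I recall the structure of $\calB_n(\K)$: apartments are isometric to $\mathfrak{a} = \{x \in \R^n : \sum x_i = 0\}$ endowed with its Euclidean metric, and the diagonal torus of $\SL(n,\K)$ preserves the standard apartment $A$, acting on it by the translations $\mathrm{diag}(a_1,\dots,a_n) \mapsto (-v(a_1),\dots,-v(a_n))$. Next, I choose a finite extension $\K'/\K$ over which the characteristic polynomial of $g$ splits. Completeness of $\K$ propagates to $\K'$ via the unique extension of $v$, and there is a canonical $\SL(n,\K)$-equivariant isometric embedding $\calB_n(\K) \hookrightarrow \calB_n(\K')$, so that $\ell_{\calB_n(\K)}(g) = \ell_{\calB_n(\K')}(g)$. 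I henceforth work in $\calB_n(\K')$.

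Over $\K'$ I apply the Jordan decomposition $g = su = us$ with $s$ semisimple (diagonalizable after possibly enlarging $\K'$ further) and $u$ unipotent, so that $\lambda(g) = \lambda(s)$. The crucial geometric point is that $\ell(u) = 0$: writing $u = I + N$ with $N$ nilpotent, one has $u^m = \sum_{k=0}^{n-1} \binom{m}{k} N^k$; since $\K' \supset \R$ has residue characteristic zero, the integers $\binom{m}{k}$ have valuation zero in $\K'$, so by the non-Archimedean triangle inequality the entries of $u^m$ all have valuation bounded below uniformly in $m$. Hence the orbit $\{u^m \cdot o : m \in \Z\}$ is bounded in $\calB_n(\K')$ and $\ell(u) = 0$. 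Since $s$ and $u$ commute, one has $\ell(su) \le \ell(s) + \ell(u) = \ell(s)$ and symmetrically $\ell(s) = \ell(su \cdot u^{-1}) \le \ell(su) + \ell(u) = \ell(su)$, giving $\ell(g) = \ell(s)$.

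Finally, conjugating $s$ to $\mathrm{diag}(a_1,\dots,a_n)$ in $\SL(n,\K')$, this element acts on the standard apartment $A$ as translation by the vector $(-v(a_1),\dots,-v(a_n))$, whose Euclidean norm is $\|\lambda(s)\|_2 = \|\lambda(g)\|_2$. As a translation on a Euclidean flat attains its translation length at every point of the flat, one concludes $\ell(s) = \|\lambda(s)\|_2$. The main potential obstacle is the isometric embedding in the first step: verifying that the natural inclusion $\calB_n(\K) \hookrightarrow \calB_n(\K')$ is a true isometry (not merely a homothety) under the chosen normalization of $v$ extending uniquely from $\K$ to $\K'$. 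Once that normalization check is in place, the remaining steps amount to the standard dictionary between the Cartan and Jordan projections.
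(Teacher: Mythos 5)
Your proof follows essentially the same route as the paper: pass to a splitting field where the building embeds equivariantly and convexly (so translation lengths agree), apply the Jordan decomposition $g=su$, use $\ell(u)=0$ to reduce to the semisimple part, and read off the translation vector of the diagonal element $s$ on an apartment. The only real difference is that you supply a self-contained argument for $\ell(u)=0$ via the binomial expansion of $u^m$ (which the paper merely asserts), and the paper cites the good-norms model of Parreau for the apartment computation; both are valid and lead to the same conclusion.
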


\begin{proof}  Let $\K\subset\LL\subset\overline\K$ be the splitting field in $\overline\K$
of the characteristic polynomial of $g$.
Since $\K$ is complete with discrete valuation and $\LL$ is a Galois extension of $\K$,
the building $\calB_n(\K)$ embeds $\SL(n,\K)$-equivariantly in the building $\calB_n(\LL)$ of $\SL(n,\LL)$
as a convex subset, \cite[2.6]{Tits79}.  Therefore it suffices to show that 
\bqn
\|\lambda(g)\|_2=\ell_{\calB_n(\LL)}(g)\,.
\eqn
Let $g=s\,u=u\,s$ be the Jordan decomposition of $g$ with $s$ diagonalizable in a basis $\calE$ of $\LL^n$
and $u$ unipotent upper triangular in this basis.
As $\ell_{\calB_n(\LL)}(u)=0$, we have that 
\bqn
\ell_{\calB_n(\LL)}(g)=\ell_{\calB_n(\LL)}(s)\,.
\eqn
The action by $s$ on the apartment associated to the basis $\calE$ in the model of $\calB_n(\LL)$ of good norms on $\LL^n$
(see \cite[3.2.2]{Parreau99}) is given by translation by $\lambda(s)=\lambda(g)$, which completes the proof of the lemma.
\end{proof}

\begin{proof}[Proof of Corollary~\ref{cor_intro:triangle}]
Let 
\bqn
\overline\K^r=\bigcup_{\ell\geq1}\R[[X^{-1/\ell}]]\,,
\eqn
endowed with the order
\bqn
\sum_{k\leq\ell} a_kX^{k/q}>0
\eqn
if $a_\ell>0$.
This order is compatible with the valuation $v$ and $\overline\K^r$ is real closed.
Observe that $\overline\K=\overline\K^r(\sqrt{-1})$: 
thus, if $\lambda\in\overline\K$, then $\lambda\overline\lambda\in\overline\K^r$ is positive
and we denote 
\bqn
\|\lambda\|=\sqrt{\lambda\overline\lambda}\,.
\eqn
Let $\overline{\R(X)}^r$ be the real closure of $\R(X)$ in $\overline\K^r$.
Given $g\in\SL(n,\R(X))$, its eigenvalues $\lambda_1,\dotsc,\lambda_n$ lie in $\overline{\R(X)}^r(\sqrt{-1})$
and we order them so that
\bqn
\lambda(g)=(-v(|a_1|),\dotsc,-v(|a_n|))\in\mathfrak{C}\,.
\eqn
Observe that any $\lambda\in \overline{\R(X)}^r$ can be represented by a Puiseux series that is convergent at $\infty$.
As a result, if $\lambda>0$
\bqn
-v(\lambda)=\lim_{t\to\infty}\frac{\ln\lambda(t)}{\ln t}\,.
\eqn
This implies that for every $\gamma\in\Gamma$
\bqn
\lim_{t\to\infty}\frac{\lambda(\rho_t(\gamma))}{\ln t}=\lambda(\rho(\gamma))\,.
\eqn
Since $\tr(\rho(\gamma_0))$ has a pole at infinity,
we must have $\lambda(\rho(\gamma_0))\neq0$, 
from which it follows that $[\lambda\circ\rho]\in\partial\calX(\Delta,\SL(n,\R))$.
Since the latter coincides with $\Omega(\Delta,\SL(n,\R))$ by Theorem~\ref{thm_intro:triangle},
this shows (1).

For the second assertion, let $\Gamma<\Delta$ be a torsion-free finite index subgroup.
By Lemma~\ref{lem:7.3} we have that for all $\gamma\in\Gamma$ 
\bqn
\|\lambda(\rho(\gamma))\|_2=\ell_{\calB_n(\K)}(\rho(\gamma))\,,
\eqn
and Theorem~\ref{thm_intro:triangle} and Corollary~\ref{cor_intro:pd}(2) imply then
that for some constants $0<c_1\leq c_2$,
\bqn
c_1\ell_{\mathrm{hyp}}(\gamma)\leq\ell_{\calB_n(\K)}(\rho(\gamma))\leq c_2\ell_{\mathrm{hyp}}(\gamma)
\eqn
for all $\gamma\in\Gamma$.

This says that the $\Gamma$-action on $\calB_n(\K)$ is displacing and hence 
\cite[Proposition~2.2.2 and Lemma~2.8.1]{DGLM} imply that the $\Gamma$-orbits,
and hence the $\Delta$-orbits, are quasi-isometric embeddings.
\end{proof}

\end{document}